\newcommand{\mbR}{\mathbb{R}}
\newcommand{\mbE}{\mathbb{E}}
\newcommand{\mcF}{\mathcal{F}}
\newcommand{\mcS}{\mathcal{S}}
\newcommand{\norm}[1]{\| {#1} \|} 
\newcommand{\bignorm}[1]{\left\| {#1} \right\|} 
\newcommand{\iproduct}[2]{\left\langle {#1}, \; {#2} \right\rangle}
\newcommand{\co}{\operatorname{co}}
\newcommand{\prox}{\operatorname{prox}}
\newcommand{\Var}{\mathbb{V}}
\renewcommand{\Pr}[1]{\mathbb{P}\left\{#1\right\}}
\newcommand{\hnabla}{\hat{\nabla}}
\newcommand{\hG}{\hat{G}}
\newcommand{\hkappa}{\hat{\kappa}}
\newcommand{\Normal}{\mathrm{Normal}}
\newcommand{\opt}{^\star}
\newcites{ec}{References}
\begin{document}

\RUNAUTHOR{Zhu, Huang, Chen}

\RUNTITLE{Boosting Accelerated Proximal Gradient Method with Adaptive Sampling for Stochastic Composite Optimization}

\TITLE{Boosting Accelerated Proximal Gradient Method with Adaptive Sampling for Stochastic Composite Optimization \footnote{\textbf{This paper was conducted during Caihua Chen's visit to Stanford University, and the authors thank Prof. Peter W. Glynn (Stanford University) for his invaluable discussions and constructive suggestions that significantly improved the quality of this paper. This work was partially supported by the National Natural Science Foundation of China [Projects 72394363 and 12301601].}}}

\ARTICLEAUTHORS{

\AUTHOR{Dongxuan Zhu}
\AFF{Department of System Engineering and Engineering Management, Chinese University of Hong Kong, Hong Kong, China,
\href{mailto:dxzhu@se.cuhk.edu.hk}{\textcolor{blue}{dxzhu@se.cuhk.edu.hk}}}

\AUTHOR{Weihuan Huang}
\AFF{School of Management \& Engineering, Nanjing University, Nanjing 210093, China, \href{mailto:hwh@nju.edu.cn}{\textcolor{blue}{hwh@nju.edu.cn}}}

\AUTHOR{Caihua Chen}
\AFF{School of Management \& Engineering, Nanjing University, Nanjing 210093, China, \href{mailto:chchen@nju.edu.cn}{\textcolor{blue}{chchen@nju.edu.cn}}}

} 

\ABSTRACT{
We develop an adaptive Nesterov accelerated proximal gradient (adaNAPG) algorithm for stochastic composite optimization problems, boosting the Nesterov accelerated proximal gradient (NAPG) algorithm through the integration of an adaptive sampling strategy for gradient estimation. We provide a complexity analysis demonstrating that the new algorithm, adaNAPG, achieves both the optimal iteration complexity and the optimal sample complexity as outlined in the existing literature. Additionally, we establish a central limit theorem for the iteration sequence of the new algorithm adaNAPG, elucidating its convergence rate and efficiency.
}

\maketitle

\section{Introduction}\label{sec:Intro}

Stochastic composite optimization problem ~\citep{lan2012optimal,li2016majorized,tran2022new} is of considerable importance in large-scale decision-making contexts. It has widespread applications across various areas, including constrained optimization~\citep{beck2009fast}, machine learning~\citep{Bottou2018Optimization,lan2020first}, and operations management~\citep{porteus2002foundations,Chen2013Sparse,Wang2023Large}. The problem is characterized as a nonsmooth stochastic optimization problem, where the objective function consists of the summation of a smooth component (typically defined as the expectation of a stochastic function) and a nonsmooth component (typically defined as a regularization term); see Problem~\eqref{problem} for formal definition. 
In practice, the true gradient of the smooth term is often unavailable, and we only have access to observations of the stochastic function, which leads to an estimated gradient. 

Proximal gradient method is a classical algorithm for addressing nonsmooth optimization problems, consisting of a gradient step followed by a proximal mapping~\citep{Beck2017First}. The acceleration method is another widely used technique aimed at enhancing the efficiency of optimization algorithms~\citep[][]{polyak1964some,beck2009fast,toh2010accelerated,walker2011anderson,shen2011accelerated,Ghadimi2012Optimal,Yang2024Data}. Integrating these two concepts,  \cite{Nesterov2012Gradient,nesterov2013introductory} propose the accelerated proximal gradient method, referred to as NAPG (\textbf{N}esterov \textbf{A}ccelerated \textbf{P}roximal \textbf{G}radient), which effectively addresses nonsmooth optimization problems while achieving the optimal iteration complexity for first-order methods. However, NAPG is designed to handle optimization problems with known true gradients, and it becomes intricate and challenging when the true gradient is unknown. As discussed in \cite{assran2020convergence}, NAPG may fail to converge when the gradient is significantly inaccurate. This naturally leads to the research question: How can we effectively combine NAPG with sampling-based estimated gradients to tackle the stochastic composite optimization problem?

In the existing literature, several sampling strategies integrate gradient estimation with optimization algorithms. A prevalent method is \textit{deterministic sampling strategy} \citep{Friedlander2012Hybrid, Shanbhag2015Budget, Jalilzadeh2016eg, Jalilzadeh2022variable, Jalilzadeh2022Smoothed}, where the sample size sequence is predetermined and serves as input for an optimization algorithm. While these strategies have strong theoretical support, they frequently depend on prior knowledge of the problem characteristics, which may not be accessible in practical scenarios. For instance, the deterministic sampling strategies exhibit significant variations between convex and strongly convex optimization problems. As illustrated in \cite{lei2024variance}, achieving optimal complexity necessitates a geometric increase in sample size for strongly convex problems, whereas a polynomial growth rate is required for convex problems. Furthermore, an excessive number of samples in convex problems does not inherently accelerate the optimization algorithm. In practice, determining the convexity of a problem can be NP-hard~\citep{ahmadi2013np}. Even when convexity is established, deterministic sample size sequences still rely on parameters that are challenging to ascertain, such as the Lipschitz constant or the condition number of the problem. Inaccurate estimation or inappropriate specification of these parameters can significantly impair algorithm performance.

\textit{Adaptive sampling strategies}, which dynamically adjust sample sizes in accordance with algorithmic iterations, offer a more efficient alternative to deterministic sampling strategies. These adaptive sampling strategies commence with a small sample size and adaptively increase it as necessary to ensure that the optimization process converges at a desired rate. However, prior studies primarily focus on non-accelerated algorithms \citep{Xie2020Constrained, Bollapragada2023Adaptive} or assume smoothness in objectives \citep{Byrd2012Sample, Bollapragada2018Adaptive}, thereby leaving a critical gap: no existing framework integrates adaptive sampling strategies with NAPG for stochastic composite optimization problems.

In this paper, we address this gap by proposing the first adaptive sampling strategy tailored to NAPG for stochastic composite optimization problems, referred to as \textit{adaNAPG} (\textbf{ada}ptive \textbf{N}esterov \textbf{A}ccelerated \textbf{P}roximal \textbf{G}radient). Our approach obviates the necessity for prior parameter knowledge while achieving optimal complexities and facilitating statistical inference through a central limit theorem. Our contributions are summarized as follows:
\begin{enumerate} 
    \item[1.] 
    We propose a new practical framework for addressing stochastic optimization problems that integrates adaptive sampling strategies with NAPG, referred to as adaNAPG. To the best of our knowledge, it represents the first accelerated algorithm that adaptively determines sample sizes at each iteration without prior knowledge of the problem structure.
    \item[2.] 
    We provide a complexity analysis demonstrating that the new algorithm adaNAPG achieves both optimal iteration complexity and optimal sample complexity under appropriate conditions. To achieve an $\varepsilon$-suboptimality of the expected objective value, the iteration complexity of adaNAPG is, respectively, $O(1/\sqrt{\varepsilon})$ and $O(\log(1/\varepsilon))$ for convex and $\mu$-strongly convex objectives. It aligns with the best results of NAPG given by \cite{nesterov2013introductory}. Additionally, for $\mu$-strongly convex objectives, adaNAPG attains the best sample complexity proposed in \cite{Pasupathy2018sampling}.
    \item[3.] 
    We establish a central limit theorem (CLT) for the iteration sequence of the new algorithm adaNAPG, utilizing some techniques from nonsmooth analysis. The asymptotic result demonstrates that the convergence rate of the iteration sequence occurs at a geometric (exponential) rate, elucidating the efficiency of the new algorithm adaNAPG.
\end{enumerate}

The remainder of this paper is structured as follows. In Section~\ref{sec:Problem Definition}, we formulate the problem and introduce our sampling strategy and algorithm. Sections~\ref{sec:Convergence Rate Analysis} and~\ref{sec:asymptotic} present the complexity analysis and asymptotic behavior analysis of the algorithm respectively. Section~\ref{sec:conclusions} concludes the paper. All technique proofs are collected in Appendixes~\ref{app:Convergence Rate Analysis} and~\ref{app:asymptotic}. The e-companion Appendixes~\ref{EC:Auxiliary Nonsmooth Analysis Results}--\ref{EC:DCLT} list some lemmas used in the proofs, and Appendixes~\ref{ec:Numerical Experiments}--\ref{appendix:remark_clt_noise} provide numerical experiments demonstrating the effectiveness of our algorithm.

\section{Problem Formulation}\label{sec:Problem Definition}

We consider the stochastic composite optimization problem~\citep[][]{ porteus2002foundations,nesterov2013introductory,Beck2017First,Bottou2018Optimization,lan2020first,Wang2023Large}:
\begin{equation}
  \min_{x \in \mbR^{d}} \; \{F(x) := f(x) + h(x)\},
  \label{problem}
\end{equation}
where $f(x)$ is smooth and $h(x)$ is nonsmooth. Typically, $f(x):=\mbE[\tilde{f}(x;\xi)]$ signifies the expectation of a stochastic function $\tilde{f}(x;\xi)$ within a specified probability space, contingent upon a random variable $\xi$. 

\begin{assumption}\label{assumption_problem}
	Suppose that Problem~\eqref{problem} has the following properties: 
  \begin{enumerate}
    \item[(i)] $f \in \mcS_{\mu,L}$ for some $\mu\geq 0$ and $L>0$, where $\mcS_{\mu,L}$ denotes the class of $\mu$-strongly convex functions with $L$-Lipschitz gradient;
    \item[(ii)] $h(x):\mbR^{d}\to\mbR \cup \{+\infty\}$ is proper, closed and convex (but not necessarily differentiable);
    \item[(iii)] Problem~\eqref{problem} is bounded below and its optimal set, denoted by $X\opt$, is nonempty. 
  \end{enumerate}
  \end{assumption}
  
Assumption~\ref{assumption_problem} is standard in the convex optimization literature~\citep[][]{Beck2017First,Schmidt2017Minimizing,Yang2024Data}. Formally, the family of classes $\mcS_{\mu,L}$ also contains the class of convex functions with Lipschitz gradient by setting $\mu=0$. We now introduce the gradient estimation process and some related notations. For $x \in \mbR^{d}$, let $g(x,\xi)$ denotes the gradient estimation for $\nabla f(x)$ with single random sample $\xi$. To reduce variance, a batch of samples $\{g(x,\xi^{k}),~k=1,\dots,K\}$, where $K$ represents the sample size, may be employed. Consequently, we obtain the following gradient estimator  $ \hnabla f(x)=\frac{1}{K}\sum_{k=1}^{K}g(x,\xi^{k})$. For a vector $x$, we use $\|x\|$ to represent its Euclidean norm throughout the paper.  

\begin{assumption}\label{assumption_gradient}
The gradient estimator $g(x,\xi)$ is unbiased and has finite variance, that is, for all $x\in\mbR^{d}$, it holds that $\mbE[g(x,\xi)] = \nabla f(x)$ and $\mbE[\norm{g(x,\xi)-\nabla f(x)}^{2}] < \infty$. 
\end{assumption}

Assumption~\ref{assumption_gradient} is a common assumption. The unbiasedness of the gradient estimator can be obtained through various methods \citep[][]{Fu2015Handbook,mohamed2020monte}. The finite variance in Assumption~\ref{assumption_gradient} simply requires that the gradient estimation possess finite variance for all $x$, without imposing a uniform bound or any specific structural constraints. 

To capitalize on the inherent smoothness of $f(x)$ in Problem~\eqref{problem}, we focus on the Nestorov accelerated proximal gradient (NAPG) method, whose iterative procedure is as follows,
\begin{equation}\label{NAPG}
x_{n+1}   =\prox_{\alpha_{n}}(y_n - \alpha_{n} \nabla f(y_n)),\quad
y_{n+1}   =x_{n+1} + \beta_{n}(x_{n+1}-x_n).
\end{equation}
Here $\alpha_{n}$ and $\beta_n$ represent sequences of step sizes and the proximal operator is defined as
\begin{equation*}
  \prox_{h}(x) := \argmin_{u \in \mbR^{d}}\left\{ h(u)+ \frac{1}{2}\norm{u-x}^{2}  \right\}. 
  \label{proximal_operator}
\end{equation*}
The NAPG~\eqref{NAPG} incorporates a form of ``momentum" from previous iterations to achieve acceleration, and it can attain the optimal convergence rate of first-order methods when the true gradient $\nabla f(y_n)$ in Equation~\eqref{NAPG} is available~\citep{nesterov2013introductory}. Nonetheless, when the true gradient $\nabla f(y_n)$ is unavailable, \cite{konevcny2015mini, allen2018katyusha, assran2020convergence} indicate that Equation~\eqref{NAPG}, which utilizes the gradient $\hnabla f(y_{n})$ estimated by a \textit{deterministic sample size}, might fail to converge. It leads to the following research question: How to design an \textit{adaptive sampling scheme} ensuring the convergence of Equation~\eqref{NAPG} with the gradient $\hnabla f(y_{n})$ (estimated by the adaptive sample size), while preserving the optimal convergence rate?

\subsection{Adaptive Sampling Scheme}

Before giving our sampling strategy, we first introduce the concept of the \textit{gradient mapping}, which represents the optimal condition for the composite optimization problem~\citep{Beck2017First} in the sense that for any $x\opt\in \mathrm{int}(\mathrm{dom}(f))$ and $\alpha>0$, it holds that $G_{\alpha}(x\opt)=0$ if and only if $x\opt$ is an optimal solution to Problem~\eqref{problem} when  Assumption~\ref{assumption_problem} holds. The gradient mapping also plays a key role in our sampling scheme design.
\begin{definition}[gradient mapping]\label{definition:gradient_mapping}
  The gradient mapping is the operator defined by
  \begin{equation}\label{eq:gradient mapping}
    G_{\alpha}(x):= \frac{1}{\alpha}\left[x-\prox_{\alpha h}\left(x-\alpha \nabla f(x)\right)\right].
  \end{equation}
\end{definition}
When replacing $\nabla f(x)$ with $\hnabla f(x)$ in Definition~\ref{definition:gradient_mapping}, we obtain the \textit{sample gradient mapping}
  \begin{equation}\label{eq:sample gradient mapping}
    \hat{G}_{\alpha}(x):= \frac{1}{\alpha}\left[x-\prox_{\alpha h}\left(x-\alpha\hnabla f(x)\right)\right].
  \end{equation}

We present an adaptive sample allocation strategy for Equation~\eqref{NAPG}, where the gradient $\nabla f(x)$ is estimated using the allocated samples at each iteration. By taking a close look at Equation~\eqref{NAPG}, the gradient estimation $\hnabla f(\cdot)$ is only required at each $y_{n}$. Then, for each $y_{n}$, let $K_{n}$ denote the sample size, and $\{\xi^{i}_{n}, 1\leq i \leq K_{n}\}$ denote the generated i.i.d. samples. 
Let $\mcF_{n}:=\sigma\{x_{0}, \xi^{i}_{t}, 1\leq i \leq K_{t}, 0\leq t \leq n-1 \}$ represent the filtration generated by the first $n-1$ iteration, and we denote $\mbE_{n}[\cdot]:=\mbE[\cdot \mid \mcF_{n}]$ for all $n\geq 1$. For each iterator $y_{n}$, we require the sample size $K_n$  ensuring that the gradient estimation $\hnabla f(y_{n})$ satisfies the following two tests almost surely for some $\theta,\nu>0$: 
\begin{align}
  \mbE_{n}\left[\left(  \frac{\hnabla f(y_{n})^{\top} \nabla f(y_{n})}{\norm{\nabla f(y_{n})}} - \norm{\nabla f(y_{n})}\right)^{2} \right] \leq \theta^{2} \norm{G_{\alpha_{n}}(y_{n})}^{2},
  \label{NAPG_test1}\\
  \mbE_{n}\left[ \left\|\hnabla f(y_{n}) - \frac{\hnabla f(y_{n})^{\top} \nabla f(y_{n})}{\norm{\nabla f(y_{n})}^{2}} \nabla f(y_{n})\right\|^{2} \right] \leq \nu^{2}\norm{G_{\alpha_{n}}(y_{n})}^{2}.
  \label{NAPG_test2}
\end{align}
We define Equations~\eqref{NAPG_test1}--\eqref{NAPG_test2} as the sample tests for sample allocation at the $n$-th iteration. This adaptive sampling strategy is inspired by \cite{Bollapragada2018Adaptive}. Notice that, when $h(x)\equiv 0$, these two tests~\eqref{NAPG_test1}--\eqref{NAPG_test2} reduce to the inner-product test and the orthogonality test outlined in \cite{Bollapragada2018Adaptive}. In practical applications, obtaining the gradient $\nabla f(y_{n})$, the gradient mapping $G_{\alpha_n}(y_n)$, and the expectation $\mathbb{E}_{n}$ as required in Tests~\eqref{NAPG_test1}--\eqref{NAPG_test2} directly may not be feasible. Hence, we resort to using the estimated gradient $\hat{\nabla} f(y_{n})$, the sample gradient mapping $\hat{G}_{\alpha_n}(y_n)$, and the sample mean. This heuristic approach is employed by \cite{Bollapragada2018Adaptive} to address Problem~\eqref{problem} with $h(x)\equiv 0$. For notational simplicity, in the rest of this paper we define $\tilde{w}_{n}:=\hnabla f(y_n)-\nabla f(y_n)$ as the noise of the gradient estimation at point $y_n$ and $w_{n}:=\hat{G}_{\alpha_{n}}(y_{n})-G_{\alpha_{n}}(y_{n})$ as the gradient mapping error for each step.

Figure~\ref{fig:NAPG_test} depicts a geometric interpretation of Tests~\eqref{NAPG_test1}--\eqref{NAPG_test2}, elucidating their role in reducing variance. Figure~\ref{fig:NAPG_test1} delineates the feasible range of the gradient estimation $\hnabla f(y_{n})$ determined by Test \eqref{NAPG_test1}, where $\hnabla f(y_{n})^{\top} \nabla f(y_{n}) /\norm{\nabla f(y_{n})}$ represents the ``length" of the projection of $\hat{\nabla} f(y_{n})$ onto the true gradient $\nabla f(y_{n})$. The shaded region in Figure~\ref{fig:NAPG_test2} delineates the feasible range of the gradient estimation $\hnabla f(y_{n})$ determined by Test~\eqref{NAPG_test2}, which precisely governs this projection ``residual" (represented by the dashed lines) and prevents $\hat{\nabla} f(y_{n})$ from becoming orthogonal to $\nabla f(y_{n})$. Simultaneously satisfying Tests \eqref{NAPG_test1} and \eqref{NAPG_test2} with the gradient estimation $\hat{\nabla} f(y_{n})$ could effectively reduce variance. Intuitively, as the iteration sequence $\{y_{n}\}$ approaches the optimal solution, $G_{\alpha_{n}}(y_{n})$ tends to zero, causing the feasible region constrained by Tests~\eqref{NAPG_test1} and \eqref{NAPG_test2} for the gradient estimator to shrink. Consequently, the gradient estimator $\hat{\nabla} f(y_{n})$ achieves accuracy by closely aligning with the true gradient $\nabla f(y_{n})$.

\begin{figure}
  \FIGURE
  {
  \subcaptionbox{Diagram of Test~\eqref{NAPG_test1}\label{fig:NAPG_test1}} 
  {\includegraphics[width=0.4\textwidth]{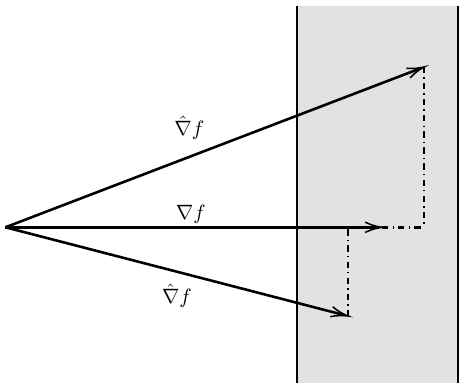}} 
  \hfill\subcaptionbox{Diagram of Test~\eqref{NAPG_test2}\label{fig:NAPG_test2}} {\includegraphics[width=0.4\textwidth]{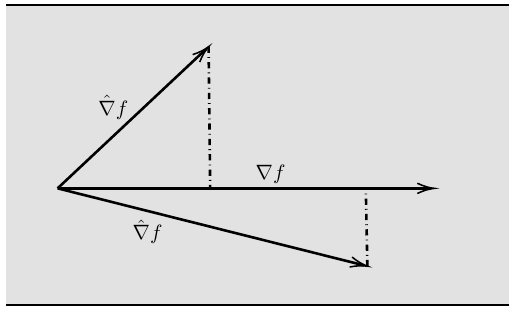}} 
  } 
  {Geometric interpretation of Test~\eqref{NAPG_test1} and Test~\eqref{NAPG_test2}. \label{fig:NAPG_test}}{}
\end{figure}

Algorithm~\ref{algorithm_adaNAPG} presents our new algorithm \textit{adaNAPG} (\textbf{ada}ptive \textbf{N}esterov \textbf{A}ccelerated \textbf{P}roxiaml \textbf{G}radient method) designed for Problem~\eqref{problem}. Each iteration involves the generation of specific gradient samples to satisfy Tests~\eqref{NAPG_test1}--\eqref{NAPG_test2} concurrently. Subsequently, we utilize the estimator $\hat{\nabla} f(y_{n})$ to drive the NAPG~\eqref{NAPG}. Notice that, for the case where the smooth term $f$ in Problem~\eqref{problem} exhibits strong convexity with a parameter $\mu>0$, the iteration scheme in Algorithm~\ref{algorithm_adaNAPG} can be simplified. Specifically, by selecting $\pi_{0}=\sqrt{\mu[L(\theta^{2}+\nu^{2}+1)]^{-1}}=\sqrt{\mu\alpha}$, it can be readily verified that $\pi_{n}=\sqrt{\mu\alpha}$ for all $n$. Consequently, the term $\pi_{n}$ can be eliminated, leading to a simplified iteration rule for $y_n$ in Algorithm~\ref{algorithm_adaNAPG} as $y_{n+1} = x_{n+1} + \beta(x_{n+1}-x_{n})$, where $\beta=(1-\sqrt{\alpha\mu})/(1+\sqrt{\alpha\mu})$.

\begin{algorithm}
  \caption{Adaptive sampling proximal gradient method with Nesterov acceleration}  \label{algorithm_adaNAPG}
  \KwIn{Initial iterator $x_{0}$; Sample test parameters: $\theta>0$, $\nu>0$;  $\pi_{0}\in(0,1)$, $\mu$, $L$;}
  \KwOut{$\{x_n\}_{n\geq 1}$;}
  Set $y_{0}\leftarrow x_{0}$, $n\leftarrow 0$,  $\alpha_n \equiv \alpha :=[L(\theta^{2}+\nu^{2}+1)]^{-1}$, $q=\mu\alpha$\;
  \While{convergence condition is not satisfied}{Generate a gradient estimator $\hnabla f(y_{n})$ satisfying both Test~\eqref{NAPG_test1} and Test~\eqref{NAPG_test2}\;
  Compute $x_{n+1}=\prox_{\alpha h}(y_{n}-\alpha \hnabla f(y_{n}))$\;
  Compute $\pi_{n+1}$ from the equation $\pi_{n+1}^{2} - (q-\pi_{n}^{2})\pi_{n+1} - \pi_{n}^{2} = 0$\;
  Compute $y_{n+1}=x_{n+1}+\frac{\pi_{n}(1-\pi_{n})}{\pi_{n}^{2}+\pi_{n+1}}(x_{n+1}-x_{n})$\;
  Set $n\leftarrow n+1$\;
  }
  \end{algorithm}

\section{Complexity Analysis}\label{sec:Convergence Rate Analysis}

This section establishes the \textit{iteration complexity} and the \textit{sample complexity} of Algorithm~\ref{algorithm_adaNAPG} respectively. The iteration complexity refers to the number of algorithmic iteration needed such that $\mbE[F(x_n)]-F(x\opt)$ is small enough, whereas the sample complexity refers to the amount of samples needed such that $\norm{x_n-x\opt}^2$ is small enough. Theorem~\ref{adaNAPG_convergence} provides the iteration complexity of Algorithm~\ref{algorithm_adaNAPG} for the convex setting and the strongly convex setting respectively.

\begin{theorem}[Iteration Complexity]\label{adaNAPG_convergence}
  Suppose that Assumptions~\ref{assumption_problem} and~\ref{assumption_gradient} hold. Let $\{x_{n}\}$ be the sequence generated by Algorithm~\ref{algorithm_adaNAPG} and $x\opt \in X\opt$. If $f \in \mathcal{S}_{0,L}$, then we have 
  \begin{equation*}
        \mbE[F(x_{n})]-F(x\opt) \leq \frac{4(\theta^{2}+\nu^{2}+1)}{\big( 2\sqrt{\theta^{2}+\nu^{2}+1}+n \big)^{2}}C_{1},
    \end{equation*}
  where $C_{1} = F(x_{0})-F(x\opt)+\frac{L}{2}\norm{x_{0}-x\opt}^{2}$. If $f\in \mathcal{S}_{\mu,L}$, for some $\mu>0$, then we have 
    \begin{equation*}
        \mbE[F(x_{n})]-F(x\opt) \leq  C_{2}\rho^{n} \quad \text{and} \quad \mbE\left[\norm{x_{n}-x\opt}^{2}\right] \leq C_{2}\rho^{n}.
    \end{equation*}
    where  $\rho=1-\sqrt{\mu[L(\theta^{2}+\nu^{2}+1)]^{-1}}< 1$ and $C_{2}=\frac{2}{\mu}[F(x_{0})-F(x\opt)+\frac{\mu}2 \norm{x_{0}-x\opt}^{2}]$. 
  \end{theorem}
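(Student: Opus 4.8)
The plan is to compress the two sample tests into a single variance inequality, and then run a stochastic version of Nesterov's estimate-sequence (potential-function) argument in which the calibrated step size makes the sampling error be absorbed by the sufficient-decrease term. First I would translate Tests~\eqref{NAPG_test1}--\eqref{NAPG_test2}. Writing $\tilde w_n = \hnabla f(y_n)-\nabla f(y_n)$, the bracket in \eqref{NAPG_test1} equals $\tilde w_n^\top \nabla f(y_n)/\norm{\nabla f(y_n)}$, the component of the noise along $\nabla f(y_n)$, while the bracket in \eqref{NAPG_test2} is exactly the orthogonal component of $\tilde w_n$; adding the two tests gives $\mbE_n[\norm{\tilde w_n}^2] \le (\theta^2+\nu^2)\norm{G_{\alpha_n}(y_n)}^2$. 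Since $\prox_{\alpha h}$ is nonexpansive, $\norm{w_n}=\norm{\hG_{\alpha_n}(y_n)-G_{\alpha_n}(y_n)}\le\norm{\tilde w_n}$, so also $\mbE_n[\norm{w_n}^2]\le(\theta^2+\nu^2)\norm{G_{\alpha_n}(y_n)}^2$. Together with conditional unbiasedness $\mbE_n[\tilde w_n]=0$ (Assumption~\ref{assumption_gradient} and the filtration structure), this is the engine of the proof: the stochastic error is controlled by the magnitude of the true gradient mapping, a quantity the deterministic descent produces as a negative term.

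Next I would establish a stochastic prox-grad inequality. Using $x_{n+1}=y_n-\alpha\hG_{\alpha_n}(y_n)$, the subgradient characterization $\hG_{\alpha_n}(y_n)-\hnabla f(y_n)\in\partial h(x_{n+1})$, the $L$-smoothness of $f$, and the $\mu$-strong convexity of $f$, I would derive, for every $\mcF_n$-measurable reference point $z$,
\begin{equation*}
F(x_{n+1}) \le F(z) + \hG_{\alpha_n}(y_n)^\top(y_n - z) - \tfrac{\mu}{2}\norm{y_n - z}^2 - \alpha\big(1-\tfrac{L\alpha}{2}\big)\norm{\hG_{\alpha_n}(y_n)}^2 + \alpha\,\tilde w_n^\top \hG_{\alpha_n}(y_n) - \tilde w_n^\top(y_n - z).
\end{equation*}
The realized sample gradient mapping is kept intact in the linear term $\hG_{\alpha_n}(y_n)^\top(y_n-z)$ so that it telescopes with the momentum/mirror step; the genuinely first-order noise term $-\tilde w_n^\top(y_n-z)$ is mean-zero under $\mbE_n$; and the remaining stochastic terms are quadratic in, or multiplied by, the gradient mapping.

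Then I would run the accelerated potential argument. Introducing the mirror sequence $v_{n+1}=v_n-(\pi_n/\gamma_{n+1})\hG_{\alpha_n}(y_n)$ and the potential $\Phi_n=F(x_n)-F(x\opt)+\tfrac{\gamma_n}{2}\norm{v_n-x\opt}^2$ with $\gamma_{n+1}=(1-\pi_n)\gamma_n+\pi_n\mu$ and $\gamma_0=L$, the extrapolation rule for $y_{n+1}$ together with the quadratic recursion $\pi_{n+1}^2-(q-\pi_n^2)\pi_{n+1}-\pi_n^2=0$ is precisely what makes the deterministic parts combine into $\mbE_n[\Phi_{n+1}]\le(1-\pi_n)\Phi_n+(\text{error})$. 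The error collects $\alpha\,\mbE_n[\tilde w_n^\top w_n]$ and the terms from $\mbE_n\norm{\hG_{\alpha_n}(y_n)}^2=\norm{G_{\alpha_n}(y_n)}^2+2\,G_{\alpha_n}(y_n)^\top\mbE_n[w_n]+\mbE_n\norm{w_n}^2$; by Cauchy--Schwarz and the variance bound, all of these are $O(\norm{G_{\alpha_n}(y_n)}^2)$ with constant proportional to $\theta^2+\nu^2$. The choice $\alpha=[L(\theta^2+\nu^2+1)]^{-1}$ is calibrated so that $-\alpha(1-L\alpha/2)\norm{G_{\alpha_n}(y_n)}^2$ dominates this error, giving the clean recursion $\mbE_n[\Phi_{n+1}]\le(1-\pi_n)\Phi_n$. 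Taking total expectations and unrolling: in the convex case the initialization $\pi_0=\sqrt{L\alpha}=(\theta^2+\nu^2+1)^{-1/2}$ and the standard estimate $\prod_{k<n}(1-\pi_k)\le 4/(2+n\pi_0)^2=4(\theta^2+\nu^2+1)/(2\sqrt{\theta^2+\nu^2+1}+n)^2$ give the first bound with $C_1=\Phi_0$; in the strongly convex case $\pi_n\equiv\sqrt{\mu\alpha}$ is the fixed point of the recursion, $\gamma_n\equiv\mu$, and $\mbE[\Phi_n]\le\rho^n\Phi_0$ with $\rho=1-\sqrt{\mu\alpha}$ yields the geometric bound, the objective-gap and iterate bounds then following from $F(x_n)-F(x\opt)\le\Phi_n$ and $\tfrac{\mu}{2}\norm{x_n-x\opt}^2\le F(x_n)-F(x\opt)$, with $C_2$ absorbing the factor $2/\mu$.

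The main obstacle is the control of the stochastic error inside the potential recursion. Because the proximal map is nonlinear, the gradient-mapping error $w_n$ is \emph{biased} ($\mbE_n[w_n]\neq0$), so the first-order-looking term $G_{\alpha_n}(y_n)^\top\mbE_n[w_n]$ cannot simply be discarded; it must be shown to live at order $\norm{G_{\alpha_n}(y_n)}^2$ via Jensen's inequality and the variance bound, and $\alpha$ must be exactly the value for which the sufficient-decrease budget outweighs the combined error. A secondary delicate point is justifying $\mbE_n[\tilde w_n]=0$ even though the sample size $K_n$ is chosen adaptively (hence random), and pinning down the growth rate of the quadratic $\pi_n$-recursion to produce the explicit constants $(2\sqrt{\theta^2+\nu^2+1}+n)^{-2}$ and $\rho^n$.
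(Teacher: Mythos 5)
Your proposal is correct and follows essentially the same route as the paper: both compress Tests~\eqref{NAPG_test1}--\eqref{NAPG_test2} into $\mbE_n[\norm{\tilde w_n}^2]\le(\theta^2+\nu^2)\norm{G_{\alpha}(y_n)}^2$ plus nonexpansiveness of the prox, prove the same sufficient-decrease inequality with $\hG_{\alpha}(y_n)$ kept intact in the linear term so the bias of $w_n$ never needs to be discarded, and then run Nesterov's estimate-sequence argument with $\gamma_{n+1}=(1-\pi_n)\gamma_n+\pi_n\mu$ and the calibrated $\alpha=[L(\theta^2+\nu^2+1)]^{-1}$ (your potential $\Phi_n$ is just the standard reformulation of the paper's $\phi_n^\star$/$v_n$ bookkeeping), arriving at the identical rate computations for $\lambda_n=\prod(1-\pi_i)$ in the convex and strongly convex cases.
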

  
As pointed out by \cite{allen2018katyusha}, Equation~\eqref{NAPG} may fail to converge when replacing $\nabla f(y_n)$ by a general gradient estimator $\hnabla f(y_n)$ without any sampling scheme to control its variance. Theorem~\ref{adaNAPG_convergence} ensures the convergence of Equation~\eqref{NAPG} since we involve sampling tests~\eqref{NAPG_test1}--\eqref{NAPG_test2} to control variance of $\hnabla f(y_n)$. It further shows that the iteration complexity is $O(1/\sqrt{\varepsilon})$ for the convex setting and $O(\log(1/\varepsilon))$ for the strongly convex setting, which achieve the lower bounds for first-order methods~\citep{nesterov2013introductory}. 

In the following we delineates the sample complexity of Algorithm~\ref{algorithm_adaNAPG}. Let $\Gamma_{n}$ be the total sample size utilized in the first $n$ iterations. \cite{Pasupathy2018sampling} prove that the lower bound of sample complexity for Equation~\eqref{NAPG} is of $\Gamma_{n}^{-1}$, when replacing $\nabla f(y_n)$ by a general gradient estimator. \cite{Pasupathy2018sampling} further show that this lower bound can only be achieved when the recursion enjoys a linear rate of convergence (in our framework it reduces to $f(x)$ being strongly convex). 
 
\begin{theorem}[Sample Complexity]\label{adaNAPG_simulation} 
      Suppose that Assumption~\ref{assumption_problem} holds with $\mu>0$. Further assume that the gradient estimator $g(x,\xi)$ has uniformly bounded variance, that is, for all $x\in\mbR^{d}$, we have $\mbE[\norm{g(x,\xi)-\nabla f(x)}^{2}] \leq \sigma^{2}$, where $\sigma>0$ is a constant. Then, it holds that $\norm{x_{n}-x\opt}^{2} = O_{\mathbb{P}}(\Gamma_{n}^{-1})$.
    \end{theorem}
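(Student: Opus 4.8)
The plan is to convert the linear (geometric) convergence of the iterates, established in Theorem~\ref{adaNAPG_convergence}, into a work--error tradeoff by tracking how the adaptive tests force the per-iteration sample sizes $K_t$ to grow. The governing intuition is a single per-iteration relation: the sample size demanded by Tests~\eqref{NAPG_test1}--\eqref{NAPG_test2} is inversely proportional to the squared distance to optimality, so that $K_t \asymp \norm{y_t-x\opt}^{-2}$, while the iterates satisfy $\norm{y_t-x\opt}^2 \asymp \rho^t$. Consequently $K_t$ grows geometrically like $\rho^{-t}$, the cumulative budget $\Gamma_n=\sum_{t<n}K_t$ is dominated by its last term and behaves like $\rho^{-n}$, and multiplying by $\norm{x_n-x\opt}^2 \asymp \rho^n$ yields $\Gamma_n\norm{x_n-x\opt}^2 \asymp 1$, which is the claim.

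First I would make the two sides of this heuristic precise. For the error, Theorem~\ref{adaNAPG_convergence} gives $\mbE[\norm{x_n-x\opt}^2]\le C_2\rho^n$, so Markov's inequality immediately yields tightness $\norm{x_n-x\opt}^2=O_{\mathbb{P}}(\rho^n)$; the same bound applies to $\norm{y_n-x\opt}^2$ since $y_n$ is an affine combination of $x_n$ and $x_{n-1}$ with bounded coefficient. For the sample size, I would expand the left-hand sides of the two tests at $\hnabla f(y_t)=\nabla f(y_t)+\tilde{w}_t$: Test~\eqref{NAPG_test1} is exactly the variance of the noise component along $\nabla f(y_t)$ and Test~\eqref{NAPG_test2} the variance of the orthogonal component, each of which is bounded by the total noise variance $\mbE[\norm{\tilde{w}_t}^2\mid\mcF_t]=K_t^{-1}\mbE[\norm{g(y_t,\xi)-\nabla f(y_t)}^2\mid\mcF_t]\le\sigma^2/K_t$ under the uniform variance assumption. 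Hence the smallest admissible sample size obeys $K_t\le C_\sigma\norm{G_{\alpha_t}(y_t)}^{-2}+1$ with $C_\sigma=\sigma^2/\min\{\theta^2,\nu^2\}$.

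Next I would convert the gradient-mapping norm into a distance. Using $G_{\alpha_t}(x\opt)=0$, the nonexpansiveness of $\prox_{\alpha h}$, and the contraction $\norm{(y-x\opt)-\alpha(\nabla f(y)-\nabla f(x\opt))}\le(1-\alpha\mu)\norm{y-x\opt}$ valid for $\alpha\le 1/L$ (which holds since $\alpha=[L(\theta^2+\nu^2+1)]^{-1}\le 1/L$), a triangle inequality gives the lower bound $\norm{G_{\alpha_t}(y_t)}\ge\mu\norm{y_t-x\opt}$. This turns the sample-size estimate into the crucial per-iteration inequality $K_t\le (C_\sigma/\mu^2)\norm{y_t-x\opt}^{-2}+1$. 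Summing over $t$ then gives $\Gamma_n\norm{x_n-x\opt}^2\le (C_\sigma/\mu^2)\sum_{t<n}\norm{x_n-x\opt}^2/\norm{y_t-x\opt}^2 + n\norm{x_n-x\opt}^2$, where the second term is $O_{\mathbb{P}}(n\rho^n)=o_{\mathbb{P}}(1)$.

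The main obstacle is the remaining sum $\sum_{t<n}\norm{x_n-x\opt}^2/\norm{y_t-x\opt}^2$: the bounds above control only the numerators, whereas an anomalously small $\norm{y_t-x\opt}$ at some intermediate step (momentum can make the accelerated iterate overshoot near $x\opt$) would inflate a reciprocal and destroy the geometric domination. To close this I need a matching lower bound $\norm{y_t-x\opt}^2\gtrsim\rho^t$ holding simultaneously for all $t\le n$ with high probability, so that each ratio is $O_{\mathbb{P}}(\rho^{n-t})$ and the sum telescopes to $O_{\mathbb{P}}(1)$. This two-sided control is precisely what the sampling-controlled stochastic recursion theory of \cite{Pasupathy2018sampling} delivers for linearly convergent recursions — which is exactly why the result requires $\mu>0$ — so I would either invoke their work-complexity theorem after verifying that adaNAPG fits their template (a linear deterministic rate $\rho$ together with sampling error proportional to $\norm{G_{\alpha_t}(y_t)}$), or establish the lower bound directly by arguing that once the noise $\tilde{w}_t$ is forced to scale with $\norm{y_t-x\opt}$, the recursion is a vanishing-relative-perturbation of the deterministic accelerated map and thus inherits its exact geometric rate from both sides.
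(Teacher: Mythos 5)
Your overall strategy — geometric decay of the error multiplied by geometric growth of the sampling budget — is the same as the paper's, and several of your intermediate steps are correct and in fact more explicit than what the paper writes down. In particular, your observation that the left-hand sides of Tests~\eqref{NAPG_test1}--\eqref{NAPG_test2} are the variances of the parallel and orthogonal components of $\tilde w_t$ and are therefore each bounded by $\sigma^2/K_t$, giving $K_t \le C_\sigma\norm{G_{\alpha}(y_t)}^{-2}+1$, is a clean deterministic substitute for the paper's route, which instead characterizes $K_n$ as the stopping time $K_{n} = \inf\{K\geq 1: g_{n}/\sigma^2 \leq \norm{G_{\alpha}(y_n)}^{2}(\theta^2+\nu^2)K/\sigma^2\}$ and invokes Lemma~1 of \cite{Chow1965asymptotic} to get $K_n\norm{G_\alpha(y_n)}^2 \to \sigma^2/(\theta^2+\nu^2)$ a.s. Your reverse-triangle bound $\norm{G_\alpha(y)}\ge\mu\norm{y-x\opt}$ (via nonexpansiveness of $\prox_{\alpha h}$ and the $(1-\alpha\mu)$-contraction of the gradient step for $\alpha\le 1/L$) is also correct and does not appear in the paper, which only uses the Lipschitz upper bound $\norm{G_\alpha(y_n)}\le(2\alpha+L)\norm{y_n-x\opt}$.

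However, there is a genuine gap exactly where you flag one, and it is the decisive step of the whole theorem. Converting $K_t\lesssim\norm{G_\alpha(y_t)}^{-2}$ into $\Gamma_n=O_{\mathbb{P}}(\rho^{-n})$ requires a probabilistic \emph{lower} bound on $\norm{G_\alpha(y_t)}^2$ (equivalently on $\norm{y_t-x\opt}^2$) of order $\rho^t$, holding with the uniformity in $t\le n$ that your ratio sum $\sum_{t<n}\norm{x_n-x\opt}^2/\norm{y_t-x\opt}^2$ demands; Theorem~\ref{adaNAPG_convergence} and Markov's inequality give only upper bounds, and neither of your two suggested closings is carried out. The paper does not close this via your ratio decomposition at all: it bounds the two factors separately, establishing $\mbE[\norm{G_\alpha(y_n)}^2]=O(\rho^n)$ from Equation~\eqref{eq2334}, passing to the needed $O_{\mathbb{P}}$ statement for the reciprocal via Lemma~6.5 of \cite{Pasupathy2018sampling}, combining with the Chow lemma to get $K_n=O_{\mathbb{P}}(\rho^{-n})$ and hence $\Gamma_n=O_{\mathbb{P}}(\rho^{-n})$, and finally multiplying by $\norm{x_n-x\opt}^2=O_{\mathbb{P}}(\rho^n)$. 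So your proposal would need either to verify that adaNAPG satisfies the hypotheses of the Pasupathy--Glynn work-complexity machinery (which is nontrivial precisely because the accelerated iterate can momentarily overshoot $x\opt$, as you note), or to prove the two-sided geometric control directly; as written, the argument stops short of a proof at its most delicate point.
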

    
Theorem~\ref{adaNAPG_simulation} shows that the convergence rate of $\norm{x_{n}-x\opt}^{2}$ is $O_{\mathbb{P}}(\Gamma_{n}^{-1})$, i.e., for any $\delta>0$, there exists $c>0$ such that $\Pr{\Gamma_n \cdot \norm{x_{n}-x\opt}^{2} > c} \leq \delta$ for all $n$. This result demonstrates that Algorithm~\ref{algorithm_adaNAPG} attains the lower bound given in~\cite{Pasupathy2018sampling}. The existing literature on adaptive sampling strategies mostly focus on the iteration complexity~\citep[][]{Byrd2012Sample,Hashemi2014adaptive,Bollapragada2018Adaptive,Franchini2023line}, and they do not carefully study the sample complexity. 

\section{Asymptotic Behavior Analysis}\label{sec:asymptotic}

In this section, we examine the asymptotic behavior of the iteration sequence from Algorithm~\ref{algorithm_adaNAPG}. \cite{lei2024variance} show that for smooth problems ($h(x)=0$ in Problem~\eqref{problem}), the iteration sequences of three classical algorithms (SGD, heavy ball method, accelerated SGD) converge in distribution to a normal distribution with proper deterministic sample size sequence. However, they focus on smooth problems without adaptive sampling strategies. The methods in \cite{lei2024variance} can not be directly extended to scenarios where $h(x) \neq 0$, primarily because the gradient mapping $G_{\alpha}(\cdot)$ exhibits nonlinearity or nonsmoothness. 

\begin{assumption}\label{assum_G_semismooth}
   Suppose that the gradient mapping $G_{\alpha}(x)$ is semismooth at $x\opt \in X\opt$ and the directional derivative of $G_{\alpha}(x)$ exists for any direction at $x \in \mbR^{d}$.
\end{assumption}

The concept of semismooth in Assumption~\ref{assum_G_semismooth} is basic in the literature of nonsmooth optimization \citep{Facchinei2003Finite}; see Appendix~\ref{EC:Auxiliary Nonsmooth Analysis Results} for formal definition. Generally, the gradient mapping in many applications are shown to be  semismooth~\citep[][]{sun2002semismooth,Facchinei2003Finite,rockafellar2009variational,deng2025augmented}. The existence of the directional derivative is a mild condition and weaker than the differentiability. 

\begin{assumption}\label{assum_f_sc}
Suppose that the smooth term $f$ in Problem~\eqref{problem} satisfies  $f \in \mathcal{S}_{\mu,L}$ for some $\mu>0$ and twice differentiable. That is, for all $x \in \mbR^{d}$, it holds that $\mu\mathbf{I} \prec \nabla^{2} f(x) \prec L\mathbf{I}$, where $\mathbf{I}$ is the identity matrix.
\end{assumption}

Here for two matrices $A$ and $B$, $A\prec B$ means $B-A$ is positive semi-definite. The twice differentiability of $f$ in Assumption~\ref{assum_f_sc} is a standard assumption when studying the CLT result of stochastic approximation algorithms \citep[][]{pflug1990non,polyak1992acceleration,hu2024quantile}. Under Assumptions~\ref{assum_G_semismooth}--\ref{assum_f_sc} we can establish a linear recursive relationship for Algorithm~\ref{algorithm_adaNAPG}.

\begin{lemma}\label{lemma_linear_recursion}
 Consider the iteration sequence $\{x_{n}\}$ in Algorithm~\ref{algorithm_adaNAPG}.  
Suppose that Assumptions~\ref{assumption_problem} and~\ref{assum_G_semismooth}--\ref{assum_f_sc} hold. There exist a sequence of matrices $\{H_{n}\}$ and a sequence of vectors $\{\delta_{n}\}$ such that
\begin{equation}\label{eq:e_n}
    e_{n+1} = P_{n}e_{n} - \alpha\rho^{-\frac{n+1}2}\begin{pmatrix}
      w_{n} \\ 0
    \end{pmatrix} + \begin{pmatrix} \zeta_{n}\\0 \end{pmatrix},
\end{equation}
where the constant $\rho$ is defined in Theorem~\ref{adaNAPG_convergence}, $w_n=\hat{G}_{\alpha}(y_{n})-G_{\alpha}(y_{n})$ defined in Section~\ref{sec:Problem Definition}, and
\begin{equation*}
    e_{n} :=\rho^{-n/2}\begin{pmatrix}
  x_{n}- x\opt \\ x_{n-1} - x\opt
\end{pmatrix},\quad
P_{n}  :=\rho^{-1/2}\begin{pmatrix}
  (1+\beta)(\mathbf{I}-\alpha H_{n}) & -\beta(\mathbf{I}-\alpha H_{n})\\
  \mathbf{I} & 0
\end{pmatrix},\quad\zeta_{n} :=-\alpha\rho^{-(n+1)/2} \delta_{n}.
\end{equation*}
Here we take $x_{-1}=x^*$ for notation.  
\end{lemma}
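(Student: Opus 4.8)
The plan is to exploit the simplified form of Algorithm~\ref{algorithm_adaNAPG} that is available under Assumption~\ref{assum_f_sc}: when $f\in\mathcal{S}_{\mu,L}$ with $\mu>0$ we have $\pi_n\equiv\sqrt{\mu\alpha}$, so the momentum coefficient is the constant $\beta=(1-\sqrt{\alpha\mu})/(1+\sqrt{\alpha\mu})$ and the update reads $x_{n+1}=\prox_{\alpha h}(y_n-\alpha\hnabla f(y_n))$, $y_{n+1}=x_{n+1}+\beta(x_{n+1}-x_n)$. The first step is purely algebraic. By Definition~\ref{definition:gradient_mapping} and Equation~\eqref{eq:sample gradient mapping}, $\prox_{\alpha h}(y_n-\alpha\hnabla f(y_n))=y_n-\alpha\hG_\alpha(y_n)$, and since $w_n=\hG_\alpha(y_n)-G_\alpha(y_n)$ this gives
\begin{equation*}
x_{n+1}-x\opt=(y_n-x\opt)-\alpha G_\alpha(y_n)-\alpha w_n.
\end{equation*}
Because $x\opt\in X\opt$, the optimality characterization of the gradient mapping stated after Definition~\ref{definition:gradient_mapping} yields $G_\alpha(x\opt)=0$, so $G_\alpha(y_n)=G_\alpha(y_n)-G_\alpha(x\opt)$ is an increment of $G_\alpha$ between $y_n$ and $x\opt$.

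The core of the argument is to linearize this increment, and here I would invoke Assumptions~\ref{assum_G_semismooth}--\ref{assum_f_sc}. Since $G_\alpha$ is semismooth at $x\opt$ and directionally differentiable, for each $n$ there is a matrix $H_n$ in the Clarke generalized Jacobian $\partial G_\alpha(y_n)$ such that the remainder
\begin{equation*}
\delta_n:=G_\alpha(y_n)-G_\alpha(x\opt)-H_n(y_n-x\opt)=G_\alpha(y_n)-H_n(y_n-x\opt)
\end{equation*}
obeys $\norm{\delta_n}=o(\norm{y_n-x\opt})$ as $y_n\to x\opt$, which is exactly the defining property of semismoothness at $x\opt$. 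The twice-differentiability of $f$ controls the $\nabla f$ contribution to $\partial G_\alpha$, while the directional-derivative hypothesis governs the nonsmooth $\prox_{\alpha h}$ contribution, ensuring the selection $\{H_n\}$ is well defined and bounded. Substituting this decomposition gives the one-step relation $x_{n+1}-x\opt=(\mathbf{I}-\alpha H_n)(y_n-x\opt)-\alpha w_n-\alpha\delta_n$.

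It then remains to stack and rescale. Writing $y_n-x\opt=(1+\beta)(x_n-x\opt)-\beta(x_{n-1}-x\opt)$ and carrying the trivial identity $x_n-x\opt=x_n-x\opt$ as a second coordinate to make the system first order, I obtain
\begin{equation*}
\begin{pmatrix}x_{n+1}-x\opt\\ x_n-x\opt\end{pmatrix}
=\begin{pmatrix}(1+\beta)(\mathbf{I}-\alpha H_n)&-\beta(\mathbf{I}-\alpha H_n)\\ \mathbf{I}&0\end{pmatrix}\begin{pmatrix}x_n-x\opt\\ x_{n-1}-x\opt\end{pmatrix}-\alpha\begin{pmatrix}w_n\\0\end{pmatrix}-\alpha\begin{pmatrix}\delta_n\\0\end{pmatrix}.
\end{equation*}
Multiplying both sides by $\rho^{-(n+1)/2}$ and recognizing $e_n=\rho^{-n/2}(x_n-x\opt,\,x_{n-1}-x\opt)^\top$, the matrix factor absorbs the global prefactor $\rho^{-1/2}$ and becomes exactly $P_n$, the noise term becomes $-\alpha\rho^{-(n+1)/2}(w_n,0)^\top$, and the last term becomes $(\zeta_n,0)^\top$ with $\zeta_n=-\alpha\rho^{-(n+1)/2}\delta_n$, reproducing Equation~\eqref{eq:e_n}; the convention $x_{-1}=x\opt$ makes the $n=0$ case consistent.

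The hard part is the linearization, not the bookkeeping: $G_\alpha$ is genuinely nonsmooth because it is built from $\prox_{\alpha h}$, so an ordinary Taylor expansion is unavailable and one must argue through the generalized Jacobian. The delicate point is to exhibit a valid selection $H_n\in\partial G_\alpha(y_n)$ whose remainder $\delta_n$ is of order $o(\norm{y_n-x\opt})$, which is precisely where semismoothness at $x\opt$ (Assumption~\ref{assum_G_semismooth}) is indispensable and where twice-differentiability of $f$ (Assumption~\ref{assum_f_sc}) is needed to separate the smooth $\nabla f$ part from the nonsmooth proximal part of $G_\alpha$. Combined with the geometric decay $\norm{y_n-x\opt}=O(\rho^{n/2})$ inherited from Theorem~\ref{adaNAPG_convergence}, this order estimate is what will ultimately force $\zeta_n\to 0$ and legitimize the subsequent central limit analysis.
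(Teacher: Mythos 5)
Your proposal is correct and follows essentially the same route as the paper: rewrite the update as $x_{n+1}=y_n-\alpha(G_\alpha(y_n)+w_n)$, linearize $G_\alpha(y_n)=H_n(y_n-x\opt)+\delta_n$ via semismoothness at $x\opt$ (the paper isolates this as Lemma~\ref{lemma_G_Jacobian}), substitute $y_n-x\opt=(1+\beta)(x_n-x\opt)-\beta(x_{n-1}-x\opt)$, stack into a first-order system, and rescale by $\rho^{-(n+1)/2}$. The only (immaterial for this statement) difference is that you select $H_n$ from $\partial G_\alpha(y_n)$ while the paper selects it from $\partial G_\alpha(x\opt)$ via the directional derivative $G_\alpha'(x\opt;\cdot)$ and Lemma~\ref{lemma_semismooth3}; both yield the required $\delta_n=o(\norm{y_n-x\opt})$ remainder.
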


The linear recursion~\eqref{eq:e_n} further implies the following decomposition of $e_n$,
\begin{equation}\label{eq:e_n_decom}
    e_n = \prod_{i=0}^{n}P_i e_{0} -\alpha \sum_{i=0}^{n} \prod_{t=i+1}^{n}P_t \rho^{-\frac{t+1}2}\begin{pmatrix}
      w_{t} \\ 0
    \end{pmatrix} + \sum_{i=0}^{n} \prod_{t=i+1}^{n}P_t \begin{pmatrix} \zeta_{t}\\0 \end{pmatrix}.
\end{equation}
The sequence $\{P_n\}$ is particularly important as it dictates how the sequence $\{e_n\}$ are transformed and projected at each iteration, effectively characterizing the dynamics of the optimization process. The recursions~\eqref{eq:e_n} and~\eqref{eq:e_n_decom} decompose the sequence~$\{e_n\}$ into two key components: One related to the gradient mapping estimation error $\{w_{n}\}$, and another to a linear approximation error $\{\zeta_n\}$ (see Lemma~\ref{lemma_G_Jacobian} in Appendix~\ref{app:asymptotic} for the linear approximation). Next, we make assumptions for the sequences $\{P_{n}\}$ and $\{\tilde{w}_n\}$ respectively.

\begin{assumption}\label{assum_spectral radius_Pn}
    Suppose that there exits some $0<\kappa<1 $ such that $\varrho(P_{n})< \kappa$ for all $n$, where $\varrho(P_{n})$ is the spectral radius of the matrix $P_{n}$.
\end{assumption}

Under Assumption~\ref{assum_spectral radius_Pn} the matrix $P_{n}$ acts as a contraction, reducing the effect of the initial point $x_{0}$ and ensuring that the cumulative error in Equation~\eqref{eq:e_n_decom} does not escalate as the algorithm converges. Assumption~\ref{assum_spectral radius_Pn} holds under some usual cases. When Problem~\eqref{problem} degenerates to the smooth optimization, that is $h(x)=0$, Assumption~\ref{assum_spectral radius_Pn} will hold naturally. Another sufficient condition for Assumption~\ref{assum_spectral radius_Pn} is that the eigenvalues of $H_n$ are all real and positive and are bounded by $\mu$ and $L$, which can be verified following the similar discussion of Equation (42) in \cite{lei2024variance} and thus omit here. 

\begin{assumption}\label{assum_clt_noise} Let $\tilde{w}_{n}=\hnabla f(y_n)-\nabla f(y_n)$ defined in Section~\ref{sec:Problem Definition}. Then, 
    \begin{enumerate}
        \item[(i)] There exists a semi-definite matrix $\tilde{S}_{0}$ such that 
        \begin{equation}\label{eq51}
            \lim_{n \to \infty}  \rho^{-n} \mbE\left[\tilde{w}_{n}\tilde{w}_{n}^{\top}\right]  = \tilde{S}_{0},
        \end{equation}
        where $\rho \in (0,1)$ is the geometric convergence rate from Theorem~\ref{adaNAPG_convergence}.
        \item[(ii)] The following Lindeberg's condition holds.
        \begin{equation}\label{eq52}
            \lim_{r \to \infty}\sup_{n}\mbE \left[\norm{\rho^{-n/2}\tilde{w}_{n}}^{2}\mathbb{I}_{[\norm{\rho^{-n/2}\tilde{w}_{n}}>r]} \right] = 0,
        \end{equation}
        where $\mathbb{I}_{C}$ is the indicator function of event $C$.
    \end{enumerate}
    \end{assumption}

The condition (i) of Assumption~\ref{assum_clt_noise} is a standard regular condition for the noise sequence.  We provide a detailed numerical evidence in Section~\ref{appendix:remark_clt_noise} to support this condition. The condition (ii) of Assumption~\ref{assum_clt_noise} is the Lindeberg's condition, which is a standard condition for the CLT-type results~\citep{VanderVaart2000Asymptotic}. 

With Assumptions~\ref{assum_spectral radius_Pn} and~\ref{assum_clt_noise}, we can show that the sequence $\{e_n\}$ in Equation~\eqref{eq:e_n} follows an asymptotic normal distribution, which gives the limiting behavior of the iteration sequence in Algorithm~\ref{algorithm_adaNAPG}.

\begin{theorem}\label{CLT_adaNAPG}
    Suppose that Assumptions~\ref{assumption_problem}--\ref{assum_clt_noise} hold. Then, there exists a positive semi-definite matrix $\Sigma$ such that the sequence $\{x_{n}\}$ generated by Algorithm~\ref{algorithm_adaNAPG} satisfies
    \begin{equation*}
      \alpha^{-1}\rho^{-n/2}\begin{pmatrix}
      x_{n}- x\opt \\ x_{n-1} - x\opt
    \end{pmatrix}\Rightarrow \mathrm{Normal}(0,\Sigma) \quad \mbox{as $n\to\infty$},
    \end{equation*}
     where $\rho$ is defined in Theorem~\ref{adaNAPG_convergence}.
    \end{theorem}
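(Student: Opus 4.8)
The target is exactly $\alpha^{-1}e_{n}\Rightarrow\Normal(0,\Sigma)$, where $e_{n}=\rho^{-n/2}(x_{n}-x\opt,\,x_{n-1}-x\opt)^{\top}$ is the rescaled error from Lemma~\ref{lemma_linear_recursion}. I would work from the explicit decomposition~\eqref{eq:e_n_decom}, which writes $e_{n}$ as the sum of an initial-condition term $\prod_{i=0}^{n}P_{i}e_{0}$, a \emph{noise term} assembled from the gradient-mapping errors $\{w_{i}\}$, and a \emph{linearization-error term} assembled from $\{\zeta_{i}\}$. The plan is to show that the first and third terms vanish in probability and that the noise term, after replacing $w_{i}$ by its leading linear part, is a weighted martingale-difference sum to which an array central limit theorem applies; Slutsky's theorem then assembles the conclusion.

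First I would dispatch the deterministic term. Assumptions~\ref{assum_G_semismooth}--\ref{assum_f_sc} force $H_{n}$ to converge to a limit $H$ (the limiting Jacobian of $G_{\alpha}$ at $x\opt$), hence $P_{n}\to P$, while Assumption~\ref{assum_spectral radius_Pn} gives $\varrho(P_{n})<\kappa<1$; together with the convergence $P_{n}\to P$ this furnishes an induced operator norm in which the limit is a strict contraction, and hence a uniform geometric weight bound $\norm{\prod_{t=i+1}^{n}P_{t}}\le C\kappa^{\,n-i}$, so $\prod_{i=0}^{n}P_{i}e_{0}\to 0$. The same bound controls the error terms once the remainders are sized. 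Semismoothness of $G_{\alpha}$ at $x\opt$ (Assumption~\ref{assum_G_semismooth}) together with the linearization of Lemma~\ref{lemma_G_Jacobian} lets me write $w_{i}=J_{i}\tilde{w}_{i}+r_{i}$ with $J_{i}$ being $\mcF_{i}$-measurable, and identifies $\delta_{i}$ as the remainder of linearizing $G_{\alpha}(y_{i})$ at $x\opt$. Since $\norm{y_{i}-x\opt}=O_{\mathbb{P}}(\rho^{i/2})$ by Theorem~\ref{adaNAPG_convergence} and $G_{\alpha}(x\opt)=0$, semismoothness yields $\norm{r_{i}},\norm{\delta_{i}}=o_{\mathbb{P}}(\rho^{i/2})$; multiplying by the blow-up factor $\rho^{-(i+1)/2}$ and summing against the geometric weights shows that both the $r$-residual of the noise term and the entire $\zeta$-term tend to zero.

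It remains to treat the principal term $-\sum_{i=0}^{n}(\prod_{t=i+1}^{n}P_{t})\rho^{-(i+1)/2}(J_{i}\tilde{w}_{i},0)^{\top}$ after cancelling $\alpha^{-1}$. Because $\mbE_{i}[\tilde{w}_{i}]=0$ (unbiasedness, Assumption~\ref{assumption_gradient}) and $J_{i}$ is $\mcF_{i}$-measurable, the rescaled summands form a martingale-difference triangular array. I would invoke the array CLT of Appendix~\ref{EC:DCLT} by verifying its two hypotheses: the conditional-covariance condition, for which $\rho^{-i}\mbE[\tilde{w}_{i}\tilde{w}_{i}^{\top}]\to\tilde{S}_{0}$ from Assumption~\ref{assum_clt_noise}(i), combined with $P_{n}\to P$ and $J_{i}\to J$, yields a convergent limit $\Sigma$ characterizable either as the matrix series $\sum_{k\ge 0}P^{k}U(P^{\top})^{k}$ or as the unique solution of the discrete Lyapunov equation $\Sigma=P\Sigma P^{\top}+U$, where $U$ is the limiting rescaled-noise covariance, supported on the leading block and equal there to $\rho^{-1}J\tilde{S}_{0}J^{\top}$; and the Lindeberg condition, which transfers directly from Assumption~\ref{assum_clt_noise}(ii) using the uniform boundedness of $\{J_{i}\}$ and of the weights. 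The array CLT then gives $\alpha^{-1}e_{n}\Rightarrow\Normal(0,\Sigma)$, which is the assertion of Theorem~\ref{CLT_adaNAPG}.

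The main obstacle, I expect, is the tension between the nonsmoothness of $G_{\alpha}$ and the geometric rescaling: the linearization remainders $r_{i},\delta_{i}$ are only $o(\rho^{i/2})$, yet they are multiplied by the divergent factor $\rho^{-(i+1)/2}$, so the margin is exactly geometric and any looser remainder estimate would let the error term survive in the limit. Making this rigorous demands a \emph{uniform-in-$n$} control of $\norm{\prod_{t=i+1}^{n}P_{t}}$ together with a remainder bound strong enough, in the $L^{2}$ or in-probability sense, to beat the blow-up after summation. A secondary difficulty is that the weights $\prod_{t=i+1}^{n}P_{t}$ depend jointly on $n$ and $i$, so verifying the conditional-covariance hypothesis is a genuine triangular-array limit requiring the simultaneous use of $P_{n}\to P$ and Assumption~\ref{assum_clt_noise}(i), rather than the evaluation of a single fixed series.
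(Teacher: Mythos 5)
Your architecture is essentially the paper's: split $e_n$ via the linear recursion into an initial-condition part, a $\zeta$-part, and a noise part (the paper packages the first two into the recursion $e_{n+1}-u_{n+1}=P_n(e_n-u_n)+\zeta_n$ for an auxiliary sequence $u_n$ equal to your principal term), kill the first two using a uniform geometric bound on $\Psi_{n,t}=P_n\cdots P_t$ together with $\zeta_n=o_{\mathbb{P}}(1)$ from Lemma~\ref{lemma:zeta_n}, and apply the double-indexed martingale CLT of Appendix~\ref{EC:DCLT} to the noise part after transferring Assumption~\ref{assum_clt_noise} from $\tilde{w}_n$ to $w_n$. The paper performs that transfer in Lemma~\ref{PG_w_lemma} and, in the CLT verification, represents $w_n=V_n\tilde{w}_n$ exactly via the nonsmooth mean-value theorem (Lemma~\ref{lemma_mean_value}) rather than as a linear part plus a remainder $r_i$; this is a cosmetic difference.

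The one step I would not accept as written is your repeated reliance on $H_n\to H$, hence $P_n\to P$ and $J_i\to J$. Nothing in Assumptions~\ref{assum_G_semismooth}--\ref{assum_f_sc} delivers this: $H_n$ is a direction-dependent selection from the Clarke generalized Jacobian $\partial G_{\alpha}(x\opt)$, chosen so that $H_n(y_n-x\opt)=G_{\alpha}^{\prime}(x\opt;y_n-x\opt)$, and since $\partial G_{\alpha}(x\opt)$ need not be a singleton and the directions $y_n-x\opt$ need not stabilize, $\{H_n\}$ can fail to converge. You invoke this convergence in two places. For the uniform bound $\norm{\prod_{t=i+1}^{n}P_t}\le C\kappa^{n-i}$ it is dispensable: the paper obtains that bound directly from Assumption~\ref{assum_spectral radius_Pn} via the product estimate in \cite{chen2005stochastic}. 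But for the conditional-covariance hypothesis it is not: your identification of $\Sigma$ as the solution of the Lyapunov equation $\Sigma=P\Sigma P^{\top}+U$ genuinely requires $P_n\to P$ and $J_i\to J$, which you have not established. The paper sidesteps this by proving only that $S_n=\sum_{t}\mbE[\Psi_{n-1,t}\,\rho^{-t}w_{t-1}w_{t-1}^{\top}\,\Psi_{n-1,t}^{\top}]$ converges --- splitting off $\rho^{-t}\mbE[w_{t-1}w_{t-1}^{\top}]-S_0$, which is annihilated by Lemma~\ref{lemma_sum_converge}, and bounding the remaining sum --- without ever identifying the limit in closed form. You should either justify the convergence of $H_n$ under an additional regularity hypothesis (e.g.\ $G_{\alpha}$ differentiable at $x\opt$), or retreat to an existence-only argument for $\lim_n S_n$ as the paper does.
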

    
Theorem~\ref{CLT_adaNAPG} demonstrates that the sequence $\{x_{n}\}$ generated by Algorithm~\ref{algorithm_adaNAPG} converges in distribution to a normal distribution at a geometric rate. Building upon Theorem~\ref{CLT_adaNAPG}, we can construct confidence regions for the optimal solution $x^{\ast}$ using the sample covariance matrix, which is effective when the dimension of $x$ is small. Nonetheless, as the dimension of $x$ increases, the efficient construction of a confidence region remains challenging. The development of efficient confidence regions for high-dimensional optimization algorithms is a noted difficulty in the literature \citep[][]{hsieh2002confidence,zhu2021constructing}, and we leave this for future endeavor.

\section{Conclusions}\label{sec:conclusions}

In this paper, we propose adaptive sampling algorithms with stochastic accelerated proximal gradient methods for addressing the stochastic composite problem. We establish a complexity analysis for the proposed algorithm. For strongly convex problems, we provide a central limit theorem for the iteration sequence generated by the new algorithm. Our research can be expanded in several directions. For instance, developing adaptive sampling methods for biased gradient estimators, which frequently arise in Zeroth-order optimization~\citep{hu2024convergence} and Markov chain gradient descent~\citep{sun2018markov}, would be valuable. 

\begin{APPENDICES}
\section{Proofs in Section~\ref{sec:Convergence Rate Analysis}}\label{app:Convergence Rate Analysis}

In this section, we prove Theorems~\ref{adaNAPG_convergence} and~\ref{adaNAPG_simulation} in Section~\ref{sec:Convergence Rate Analysis}. Before that, we prove some auxiliary lemmas.

\subsection{Auxiliary Lemmas}

\begin{lemma}\label{lemma:adaNAPG sufficient descent}
Suppose that Assumptions~\ref{assumption_problem} and~\ref{assumption_gradient} hold. Then, for any $z \in \mathcal{F}_{n}$, the iteration sequences $\{x_{n}\}$ and $\{y_{n}\}$ in Algorithm~\ref{algorithm_adaNAPG} satisfy
  \begin{equation}\label{eq83}
    \mbE_{n}[F(x_{n+1})]  \leq \mbE_{n} \left[ F(z) -\frac{\mu}{2}\bignorm{y_{n}-z}^{2} -\frac{1}{L} \bignorm{G_{\alpha}(y_{n})}^{2}-\iproduct{\hat{G}_{\alpha}(y_{n})}{z-y_{n}} \right].
 \end{equation}
\end{lemma}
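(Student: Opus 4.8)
The plan is to establish the descent inequality~\eqref{eq83} by combining the strong convexity of $f$, the $L$-smoothness of $f$, and the optimality characterization of the proximal step that defines $x_{n+1}$. First I would record the basic building blocks. Since $f \in \mcS_{\mu,L}$, for any $z$ the strong convexity lower bound gives $f(z) \geq f(y_n) + \iproduct{\nabla f(y_n)}{z - y_n} + \frac{\mu}{2}\norm{z-y_n}^2$, while the $L$-smoothness descent lemma gives the upper bound $f(x_{n+1}) \leq f(y_n) + \iproduct{\nabla f(y_n)}{x_{n+1} - y_n} + \frac{L}{2}\norm{x_{n+1}-y_n}^2$. Since $\alpha = [L(\theta^2+\nu^2+1)]^{-1} \leq 1/L$, the coefficient $\frac{L}{2}$ can be controlled, and crucially $x_{n+1} - y_n = -\alpha \hat{G}_\alpha(y_n)$ by the definition of the sample gradient mapping~\eqref{eq:sample gradient mapping}.

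Next I would exploit the optimality of the proximal operator. Since $x_{n+1} = \prox_{\alpha h}(y_n - \alpha\hnabla f(y_n))$, the first-order optimality condition yields a subgradient $s \in \partial h(x_{n+1})$ with $s = \frac{1}{\alpha}(y_n - \alpha\hnabla f(y_n) - x_{n+1}) = \hat{G}_\alpha(y_n) - \hnabla f(y_n)$. Convexity of $h$ then gives $h(z) \geq h(x_{n+1}) + \iproduct{s}{z - x_{n+1}}$, i.e. $h(x_{n+1}) \leq h(z) - \iproduct{\hat{G}_\alpha(y_n) - \hnabla f(y_n)}{z - x_{n+1}}$. The plan is to add this to the smoothness upper bound for $f(x_{n+1})$, then subtract the strong convexity lower bound $f(y_n) + \iproduct{\nabla f(y_n)}{z-y_n} \leq f(z) - \frac{\mu}{2}\norm{z-y_n}^2$, so that $F(x_{n+1}) = f(x_{n+1}) + h(x_{n+1})$ is bounded above by $F(z) - \frac{\mu}{2}\norm{y_n - z}^2$ plus inner-product and quadratic remainder terms that I would collect and rearrange. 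Substituting $x_{n+1} - y_n = -\alpha\hat{G}_\alpha(y_n)$ throughout lets me express all the remainder terms in $\hat{G}_\alpha(y_n)$, $\hnabla f(y_n)$, and $\nabla f(y_n)$; I expect the inner products to recombine into $-\iproduct{\hat{G}_\alpha(y_n)}{z - y_n}$ together with a residual quadratic in $\hat{G}_\alpha(y_n)$ and cross terms involving the noise $\tilde{w}_n = \hnabla f(y_n) - \nabla f(y_n)$.

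The delicate step, and the main obstacle, is showing that after taking conditional expectation $\mbE_n[\cdot]$ the residual terms collapse to the clean bound $-\frac{1}{L}\norm{G_\alpha(y_n)}^2$. This is precisely where the sample tests~\eqref{NAPG_test1}--\eqref{NAPG_test2} enter: they control $\mbE_n[\norm{\tilde w_n}^2]$-type quantities (decomposed into the component along $\nabla f(y_n)$ and the orthogonal residual) by $(\theta^2 + \nu^2)\norm{G_\alpha(y_n)}^2$. I would use the Pythagorean decomposition of $\hnabla f(y_n)$ into its projection onto $\nabla f(y_n)$ and the orthogonal complement so that Tests~\eqref{NAPG_test1} and~\eqref{NAPG_test2} bound exactly those two pieces, then combine with the definition $\alpha = [L(\theta^2+\nu^2+1)]^{-1}$ to absorb the extra $(\theta^2+\nu^2)\norm{G_\alpha(y_n)}^2$ error against the $-\frac{1}{2\alpha}\norm{\hat G_\alpha(y_n)}^2$ descent term. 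The bookkeeping identity $\norm{\hat G_\alpha}^2 = \norm{G_\alpha}^2 + 2\iproduct{G_\alpha}{w_n} + \norm{w_n}^2$ together with $\mbE_n[w_n \mid \cdot]$ controlled by the tests should produce the coefficient $1 - (\theta^2+\nu^2)/(\theta^2+\nu^2+1) = 1/(\theta^2+\nu^2+1)$, and multiplying by $\frac{1}{2\alpha} = \frac{L(\theta^2+\nu^2+1)}{2}$ recovers the stated $\frac{1}{L}$ (up to the factor I would verify carefully). Getting this constant exactly right, and confirming that the unbiasedness $\mbE_n[\tilde w_n] = 0$ kills the cross terms, is where I would spend the most care.
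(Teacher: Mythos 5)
Your plan retraces the paper's own proof essentially line for line: the descent lemma applied with $x_{n+1}-y_{n}=-\alpha\hat{G}_{\alpha}(y_{n})$, the $\mu$-strong-convexity lower bound at $y_{n}$, and the prox optimality condition $\hat{G}_{\alpha}(y_{n})-\hnabla f(y_{n})\in\partial h(x_{n+1})$ are exactly the paper's \eqref{eq:1}--\eqref{eq:3}; summing them gives the paper's intermediate bound \eqref{eq70}; and the use of Tests~\eqref{NAPG_test1}--\eqref{NAPG_test2} to obtain $\mbE_{n}[\norm{\tilde w_{n}}^{2}]\le(\theta^{2}+\nu^{2})\norm{G_{\alpha}(y_{n})}^{2}$, the nonexpansiveness of the proximal operator to obtain $\norm{w_{n}}\le\norm{\tilde w_{n}}$, and $\mbE_{n}[\tilde w_{n}]=0$ to annihilate the cross term for $\mcF_{n}$-measurable $z$ are exactly \eqref{eq102}--\eqref{eq72}. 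So the route is not different.

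The genuine gap is precisely the step you postpone: producing the coefficient $-1/L$. After taking $\mbE_{n}$ in \eqref{eq70}, what remains to be shown is that $(\frac{\alpha^{2}L}{2}-\alpha)\,\mbE_{n}[\|\hat{G}_{\alpha}(y_{n})\|^{2}]+\alpha\,\mbE_{n}[\iproduct{\tilde w_{n}}{w_{n}}]\le-\frac{1}{L}\norm{G_{\alpha}(y_{n})}^{2}$. Your own arithmetic sketch outputs $L/2$ rather than $1/L$, and no amount of careful bookkeeping will rescue that constant: the coefficient $\frac{\alpha^{2}L}{2}-\alpha$ is negative, so the upper bound $\mbE_{n}[\|\hat{G}_{\alpha}(y_{n})\|^{2}]\le2(\theta^{2}+\nu^{2}+1)\norm{G_{\alpha}(y_{n})}^{2}$ cannot be substituted into that term (doing so reverses the inequality --- and this substitution is in fact how the paper's own proof arrives at $-1/L$). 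What this route genuinely delivers is a descent constant of order $\alpha/2=\frac{1}{2L(\theta^{2}+\nu^{2}+1)}$, and the stated $1/L$ is not attainable at all: take $d=1$, $h\equiv0$, $f(x)=\frac{L}{2}x^{2}$, exact gradients (which satisfy both tests trivially), and $z=y_{n}$; then the right-hand side of \eqref{eq83} equals $\frac{L}{2}y_{n}^{2}-\frac{1}{L}(Ly_{n})^{2}=-\frac{L}{2}y_{n}^{2}<0$, while the left-hand side is $f((1-\alpha L)y_{n})\ge0$. So the place where you said you would ``verify the factor carefully'' is not a routine check --- it is exactly where the argument (and the bound as stated) breaks, and you should expect to be able to prove the lemma only with $\alpha/2$ in place of $1/L$.
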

\begin{proof}{Proof of Lemma~\ref{lemma:adaNAPG sufficient descent}}
    Since $f \in \mcS_{\mu,L}$ and $x_{n+1}=y_{n}-\alpha \hat{G}_{\alpha}(y_{n})$, we have~\citep{Ghadimi2016Accelerated}
\begin{equation}\label{eq:1}
  f(x_{n+1}) \leq f(y_{n})-\alpha \iproduct{\nabla f(y_{n})}{\hat{G}_{\alpha}(y_{n})}+\frac{L\alpha^{2}}{2}\bignorm{\hat{G}_{\alpha}(y_{n})}^{2}.
\end{equation}
From the definition of $\mu$-strong convexity, for any $z$ it holds that
\begin{equation}\label{eq:2}
f(z) \geq f(y_{n}) +\iproduct{\nabla f(y_{n})}{z-y_{n}} + \frac{\mu}{2}\bignorm{z-y_{n}}^{2}.
\end{equation}
By $x_{n+1}=\prox_{\alpha h}(y_{n}-\alpha \hnabla f(y_{n}))$ in the Line 4 of Algorithm~\ref{algorithm_adaNAPG}, Theorem 6.39  in~\cite{Beck2017First} gives us that 
$y_{n}-\alpha \hnabla f(y_{n}) - x_{n+1} \in \partial (\alpha h)(x_{n+1})$, 
which is
$\hG_{\alpha}(y_{n})-\hnabla f(y_{n})\in \partial  h(x_{n+1})$. Since $h$ is a convex function, for any $z$ we further have
\begin{equation}\label{eq:3}
    h(z)\geq  h(x_{n+1}) + \iproduct{\hG_{\alpha}(y_{n})-\hnabla f(y_{n})}{z-x_{n+1}}.
\end{equation}
Recalling $w_n= \hat{G}_{\alpha}(y_n) - G_{\alpha}(y_n)$ and $\tilde{w}_n = \hnabla f(y_{n}) - \nabla f(y_{n})$, then it holds that
    \begin{align}
    \nonumber  F(x_{n+1}) = & {} f(x_{n+1}) + h(x_{n+1})\\
   \nonumber   \overset{\eqref{eq:1}}{\leq}  & {}  f(y_{n})-\alpha \iproduct{\nabla f(y_{n})}{\hat{G}_{\alpha}(y_{n})}+\frac{L\alpha^{2}}{2}\bignorm{\hat{G}_{\alpha}(y_{n})}^{2} + h(x_{n+1})\\ \nonumber
      \overset{\eqref{eq:2}-\eqref{eq:3}}{\leq} & {} f(z)-\iproduct{\nabla f(y_{n})}{z-y_{n}} -\frac{\mu}{2}\bignorm{z-y_{n}}^{2} -\alpha \iproduct{\nabla f(y_{n})}{\hat{G}_{\alpha}(y_{n})}+\frac{L\alpha^{2}}{2}\bignorm{\hat{G}_{\alpha}(y_{n})}^{2}\\ \nonumber
      & + h(z) - \iproduct{\hat{G}_{\alpha}(y_{n})-\hnabla f(y_{n})}{z-y_{n}+\alpha \hat{G}_{\alpha}(y_{n})}\\ \nonumber
      = & {} F(z) -\frac{\mu}{2}\bignorm{z-y_{n}}^{2}+ \left(\frac{\alpha^{2}L}{2}-\alpha\right) \bignorm{\hat{G}_{\alpha}(y_{n})}^{2}+\iproduct{\tilde{w}_{n}}{z-y_{n}} \\ \nonumber
       & -\iproduct{\hat{G}_{\alpha}(y_{n})}{z-y_{n}} + \alpha \iproduct{\tilde{w}_{n}}{\hat{G}_{\alpha}(y_{n})}\\ \nonumber
      = & {} F(z) -\frac{\mu}{2}\bignorm{z-y_{n}}^{2} + \iproduct{\tilde{w}_{n}}{z-y_{n}+\alpha G_{\alpha}(y_{n})}-\iproduct{\hat{G}_{\alpha}(y_{n})}{z-y_{n}} \\ 
        & + \left(\frac{\alpha^{2}L}{2}-\alpha\right) \bignorm{\hat{G}_{\alpha}(y_{n})}^{2} + \alpha \iproduct{\tilde{w}_{n}}{w_{n}}.\label{eq70}
    \end{align}

From Test~\eqref{NAPG_test2} we know that 
  \begin{align}
   \nonumber & \mbE_{n}\left[ \bignorm{\hnabla f(y_{n}) - \frac{\hnabla f(y_{n})^{\top} \nabla f(y_{n})}{\bignorm{\nabla f(y_{n})}^{2}} \nabla f(y_{n})}^{2} \right] \\
    = & \mbE_{n}\left[ \bignorm{\hnabla f(y_{n})}^{2} \right] - \mbE_{n}\left[ \frac{(\hnabla f(y_{n})^{\top} \nabla f(y_{n}))^{2}}{\bignorm{\nabla f(y_{n})}^{2}} \right] \leq \nu^{2}\bignorm{G_{\alpha_{n}}(y_{n})}^{2}.\label{eq102}
  \end{align}
Similarly, from Test~\eqref{NAPG_test1} we can get
\begin{equation}\label{eq103}
     \mbE_{n}\left[\left(  \frac{\hnabla f(y_{n})^{\top} \nabla f(y_{n})}{\bignorm{\nabla F(y_{n})}} - \bignorm{\nabla F(y_{n})}\right)^{2} \right]
     \leq \theta^{2}\bignorm{G_{\alpha_{n}}(y_{n})}^{2}.
\end{equation}
Adding Equation~\eqref{eq102} and Equation~\eqref{eq103}, we have
\begin{equation}
 \mbE_{n}\left[ \bignorm{\tilde{w}_n}^{2} \right]= \mbE_{n}\left[ \bignorm{\hnabla f(y_{n})-\nabla f(y_{n})}^{2} \right] \leq (\theta^{2} + \nu^{2})\bignorm{G_{\alpha_{n}}(y_{n})}^{2}.
  \label{eq104}
\end{equation}
Since the proximal operator is Lipschitz ,for $w_n$  we have
\begin{equation*}
     \bignorm{w_n} =  \frac{1}{\alpha}\bignorm{\prox_{\alpha h}(y_n-\alpha\nabla f(y_n)) - \prox_{\alpha h}(y_n-\alpha(\nabla f(y_n)+\tilde{w}_n))} \leq \bignorm{\tilde{w}_n},
\end{equation*}
which implies that 
\begin{equation}\label{eq71}
  \mbE_{n}[\iproduct{\tilde{w}_n}{w_n}] \leq \mbE_{n}[\bignorm{\tilde{w}_n}\bignorm{w_n}]\leq \mbE_{n}[\bignorm{\tilde{w}_n}^{2}] \leq (\theta^{2}+\nu^{2}) \bignorm{G_{\alpha}(y_{n})}^{2}.
\end{equation}
Then we bound the term $\|\hat{G}_{\alpha}(y_{n})\|^{2}$. Since $\norm{a+b}^{2}\leq 2(\norm{a}^{2}+\norm{b}^{2})$, we have 
\begin{align}
\nonumber    \mbE_{n}\left[\bignorm{\hat{G}_{\alpha}(y_{n})}^{2}\right]&  = \mbE_{n}\left[\bignorm{w_n+G_{\alpha}(y_{n})}^{2}\right] \\
 \nonumber   & \leq 2 \mbE_{n}\left[\bignorm{w_n}^{2}+\bignorm{G_{\alpha}(y_{n})}^{2}\right]\\
    & \leq 2(\theta^{2}+\nu^{2}+1)\bignorm{G_{\alpha}(y_{n})}^{2}.\label{eq72}
  \end{align}
Taking expectation on both sides of Equation~\eqref{eq70} and taking Equations~\eqref{eq71} and \eqref{eq72} into account, then we prove the inequality 
\begin{equation*}
     \mbE_{n}[F(x_{n+1})]  \leq \mbE_{n}[F(z) -\frac{\mu}{2}\bignorm{y_{n}-z}^{2} -\frac{1}{L} \bignorm{G_{\alpha}(y_{n})}^{2}+\iproduct{\tilde{w}_{n}}{z-y_{n}+\alpha G_{\alpha}(y_{n})}-\iproduct{\hat{G}_{\alpha}(y_{n})}{z-y_{n}}],
  \end{equation*}
  where we also use $\alpha=[L(\theta^{2}+\nu^{2}+1)]^{-1}$. When $z \in\mathcal{F}_{n}$, Equation~\eqref{eq83} holds since 
\begin{equation*}
 \mbE_{n}[\iproduct{\tilde{w}_{n}}{z-y_{n}+\alpha G_{\alpha}(y_{n})}] = \iproduct{\mbE_{n}[\tilde{w}_{n}]}{z-y_{n}+\alpha G_{\alpha}(y_{n})}=0.\Halmos
\end{equation*}

\end{proof}

The convergence analysis of Algorithm~\ref{algorithm_adaNAPG} relies on the tools called \textit{estimate sequence} (see \citealt{nesterov2013introductory} for more details). Define the following sequences,
\begin{equation} \label{eq74}
  \phi_{0}(x) = F(x_{0})+\frac{\gamma_{0}}{2}\bignorm{x_{0}-x\opt}^{2},
\end{equation}
\begin{equation}\label{eq75}
  \phi_{n+1}(x) = (1-\pi_{n})\phi_{n}(x) + \pi_{n}\left(F(x_{n+1})-\eta \bignorm{G_{\alpha}(y_{n})}^{2} + \frac{\mu}{2}\bignorm{x-y_{n}}^{2}+\iproduct{\hat{G}_{\alpha}(y_{n})}{x-y_{n}}\right),
\end{equation}
Here $\pi_{n}$ is the positive root of the following quadratic equation,
\begin{equation}\label{eq82}
    L(\theta^{2}+\nu^{2}+1)\pi_{n}^{2}=(1-\pi_{n})\gamma_{n}+\pi_{n}\mu
  \end{equation}
and $\gamma_{n}$ is computed by the following recurrence,
\begin{equation}\label{eq85}
    \gamma_{n+1} =(1-\pi_{n})\gamma_{n}+\pi_{n}\mu
  \end{equation}
  for some initial value $\gamma_{0}>0$. One can verify that the sequence $\pi_{n}$ generated from Equation~\eqref{eq82} is the same as the one computed from Line 6 in Algorithm~\ref{algorithm_adaNAPG} by following the discussion about Equation (2.2.9) in \cite{nesterov2013introductory}. Next lemma shows the relationship between the estimate sequence and the function value sequence.

\begin{lemma}\label{lemma:estimate sequence}
    Suppose that Assumptions~\ref{assumption_problem} and~\ref{assumption_gradient} hold. Then we have 
    \begin{equation*}\label{eq87}
  \mbE[F(x_{n})] -F(x\opt) \leq \lambda_{n}[\phi_{0}(x\opt)-F(x\opt)], \quad \lambda_{n}=\prod_{i=1}^{n}(1-\pi_{i}).
\end{equation*}
\end{lemma}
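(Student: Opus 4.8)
The plan is to follow the estimate-sequence method of \cite{nesterov2013introductory}, adapted to the stochastic composite setting. I would establish two facts and then chain them. Fact (A): the pair $\{\phi_n,\lambda_n\}$ is an estimate sequence in expectation, namely $\mbE[\phi_n(x\opt)] \leq (1-\lambda_n)F(x\opt) + \lambda_n\phi_0(x\opt)$. Fact (B): the function-value bound $\mbE[F(x_n)] \leq \mbE[\phi_n^{\star}]$, where $\phi_n^{\star} := \min_{x}\phi_n(x)$. Since $\phi_n^{\star} \leq \phi_n(x\opt)$ pointwise, combining (A) and (B) gives $\mbE[F(x_n)] \leq (1-\lambda_n)F(x\opt) + \lambda_n\phi_0(x\opt)$, and subtracting $F(x\opt)$ yields exactly the stated bound because $(1-\lambda_n)F(x\opt)+\lambda_n\phi_0(x\opt)-F(x\opt)=\lambda_n[\phi_0(x\opt)-F(x\opt)]$.

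For Fact (A) I would induct on $n$. The base case is an equality since $\lambda_0=1$. For the step, note that $\pi_n,\gamma_n,\lambda_n$ are deterministic (generated by \eqref{eq82} and \eqref{eq85} from the constants) and that $\phi_n(x)$, $y_n$, and $G_{\alpha}(y_n)$ are $\mcF_n$-measurable, so only $F(x_{n+1})$ and $\hG_{\alpha}(y_n)$ in the bracket of \eqref{eq75} carry fresh randomness. Taking $\mbE_{n}[\cdot]$ of \eqref{eq75} and applying Lemma~\ref{lemma:adaNAPG sufficient descent} with $z=x$ (legitimate for deterministic $x$, in particular $x=x\opt$) shows that the conditional expectation of the bracket is at most $F(x)$; here, provided $\eta>0$, the extra term $-\eta\norm{G_{\alpha}(y_n)}^2$ only reinforces the inequality, since it shares the sign of the $-\tfrac1L\norm{G_{\alpha}(y_n)}^2$ term produced by the descent lemma. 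This gives $\mbE_{n}[\phi_{n+1}(x)] \leq (1-\pi_n)\phi_n(x)+\pi_n F(x)$; taking total expectation, inserting the inductive hypothesis, and using the telescoping identity $(1-\pi_n)(1-\lambda_n)+\pi_n = 1-\lambda_{n+1}$ afforded by the recursive definition of $\lambda_n$ closes the induction.

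Fact (B) is the main obstacle. The key is that each $\phi_n$ is a quadratic with deterministic curvature $\gamma_n$, so $\phi_n(x)=\phi_n^{\star}+\tfrac{\gamma_n}{2}\norm{x-v_n}^2$ for a random center $v_n$; differentiating \eqref{eq75} confirms the curvature recursion \eqref{eq85}. Expanding the minimum via the canonical-form computation (Nesterov's Lemma 2.2.3) writes $\phi_{n+1}^{\star}$ as $(1-\pi_n)\phi_n^{\star}+\pi_n F(x_{n+1})-\pi_n\eta\norm{G_{\alpha}(y_n)}^2$, minus a curvature term $\tfrac{\pi_n^2}{2\gamma_{n+1}}\norm{\hG_{\alpha}(y_n)}^2$, plus a nonnegative $\mu$-term and cross terms that are linear in $\hG_{\alpha}(y_n)$ and involve $v_n-y_n$ and $x_n-y_n$. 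The momentum weight $\tfrac{\pi_n(1-\pi_n)}{\pi_n^2+\pi_{n+1}}$ defining $y_{n+1}$ in Algorithm~\ref{algorithm_adaNAPG} is engineered precisely so that these linear cross terms cancel, while \eqref{eq82} gives $\gamma_{n+1}=\alpha^{-1}\pi_n^2$, hence $\tfrac{\pi_n^2}{2\gamma_{n+1}}=\tfrac{\alpha}{2}$. Taking $\mbE_{n}[\cdot]$ and applying the variance bound $\mbE_{n}[\norm{\hG_{\alpha}(y_n)}^2]\leq 2(\theta^2+\nu^2+1)\norm{G_{\alpha}(y_n)}^2$ from \eqref{eq72} converts the curvature term into exactly $\tfrac1L\norm{G_{\alpha}(y_n)}^2$, which is matched by the $-\tfrac1L\norm{G_{\alpha}(y_n)}^2$ descent margin of Lemma~\ref{lemma:adaNAPG sufficient descent}. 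Feeding in the inductive hypothesis $\mbE[\phi_n^{\star}]\geq\mbE[F(x_n)]$, together with Lemma~\ref{lemma:adaNAPG sufficient descent} used to relate $\mbE_{n}[F(x_{n+1})]$ to $F(y_n)$ and $F(x_n)$, then yields $\mbE[F(x_{n+1})]\leq\mbE[\phi_{n+1}^{\star}]$.

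I expect the delicate point to be the bookkeeping of the stochastic cross terms. Unlike the deterministic analysis, the linear terms in the expansion of $\phi_{n+1}^{\star}$ are governed by the \emph{sample} gradient mapping $\hG_{\alpha}(y_n)$ rather than the true $G_{\alpha}(y_n)$, so the cancellation forced by the $y_n$-update must be verified with $\hG_{\alpha}(y_n)$ throughout, and the descent term $\tfrac1L\norm{G_{\alpha}(y_n)}^2$ can be matched only \emph{after} conditioning and invoking the sample-test variance bound \eqref{eq72}. One must also confirm that $v_n$ and $\phi_n^{\star}$ are $\mcF_n$-measurable so that the conditioning and total-expectation steps are valid, and trace how the choice $\alpha=[L(\theta^2+\nu^2+1)]^{-1}$ is precisely what collapses the variance-inflated curvature term to $\tfrac1L\norm{G_{\alpha}(y_n)}^2$, thereby coupling the stochastic estimate-sequence recursion to the deterministic descent guarantee.
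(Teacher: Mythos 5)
Your proposal is correct and follows essentially the same route as the paper's proof: your Facts (B) and (A) and the final chaining are precisely the paper's Steps 3--5, built on the same canonical-form expansion of $\phi_{n+1}^{\star}$, the same cross-term cancellation $x_n-y_n+\frac{\pi_n\gamma_n}{\gamma_{n+1}}(v_n-y_n)=0$ enforced by the momentum weight, and the same use of the variance bound \eqref{eq72} with $\alpha=[L(\theta^2+\nu^2+1)]^{-1}$ to absorb the $\norm{\hG_{\alpha}(y_n)}^2$ curvature term. One small correction: the paper takes $\eta=-1/L<0$, so in your Fact (A) the term $-\eta\norm{G_{\alpha}(y_n)}^{2}$ is positive and is absorbed exactly (not reinforced) by the $-\frac{1}{L}\norm{G_{\alpha}(y_n)}^{2}$ margin from Lemma~\ref{lemma:adaNAPG sufficient descent}; your sufficient condition $\eta>0$ is not what the algorithm uses, but the inequality actually needed, $\eta\geq -1/L$, holds with equality, so the step goes through.
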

\begin{proof}{Proof of Lemma~\ref{lemma:estimate sequence}}
 The proof is divided into five steps.
 \paragraph{Step 1} Firstly we show that $\nabla^{2} \phi_{n}(x)=\gamma_{n} \mathbf{I}$ by induction, where $\mathbf{I}$ is the identity matrix. By Equation~\eqref{eq74}, we have $\nabla^{2} \phi_{0}(x)=\gamma_{0} \mathbf{I}$. Suppose that $\nabla^{2} \phi_{n}(x)=\gamma_{n} \mathbf{I}$, then from Equation~\eqref{eq75} we have 
\begin{equation*}
  \nabla^{2} \phi_{n+1}(x)=(1-\pi_{n})\nabla^{2} \phi_{n}(x) + \pi_{n} \mu \mathbf{I} = \gamma_{n+1} \mathbf{I},
\end{equation*}
where we use Equation~\eqref{eq85}. Thus  $\nabla^{2} \phi_{n}(x)=\gamma_{n} \mathbf{I}$ for all $n$. Since $\phi_{n}(x)$ is a quadratic function, $\phi_{n}(x) = \phi_{n}\opt + \frac{\gamma_{n}}{2}\|x-v_{n}\|^{2}$, where $\phi_{n}\opt = \min_{x}\phi_{n}(x)$ and $v_{n}$ is the minimizer of $\phi_{n}(x)$. 
\paragraph{Step 2} Next, we give the recursive rule of $\phi_{n}\opt$ and $v_{n}$. Firstly, note that $\nabla \phi_{n}(x)=\gamma_{n} (x-v_{n})$. Then according to the first order optimality condition of $\min_{x}\phi_{n+1}(x)$ and Equation~\eqref{eq75}, we have
  \begin{align*}
    \nabla \phi_{n+1}(x) & = (1-\pi_{n})\nabla \phi_{n}(x) + \pi_{n}[\mu(x-y_{n})+\hat{G}_{\alpha}(y_{n})]\\
    & = \gamma_{n+1}x-(1-\pi_{n})\gamma_{n}v_{n}-\mu \pi_{n}y_{n}+\pi_{n}\hat{G}_{\alpha}(y_{n}) = 0,
  \end{align*}
which implies the recursive rule of $v_{n}$:
\begin{equation}\label{eq77}
  v_{n+1} = \frac{1}{\gamma_{n+1}}\left[(1-\pi_{n})\gamma_{n}v_{n}+\mu \pi_{n}y_{n}-\pi_{n}\hat{G}_{\alpha}(y_{n})\right].
\end{equation}
Now we compute $\phi_{n+1}\opt$:
\begin{align}
    \phi_{n+1}\opt &=  \phi_{n+1}(y_{n})-\frac{\gamma_{n+1}}{2}\norm{y_{n}-v_{n+1}}^{2} \nonumber \\
    &= (1-\pi_{n})\phi_{n}(y_{n})+\pi_{n} [F(x_{n+1})-\eta \norm{G_{\alpha}(y_{n})}^{2}]- \frac{\gamma_{n+1}}{2}\norm{y_{n}-v_{n+1}}^{2}. \label{eq78}
\end{align}
By Equation~\eqref{eq77} we have $ v_{n+1}-y_{n} = \frac{1}{\gamma_{n+1}}[(1-\pi_{n})\gamma_{n}(v_{n}-y_{n})-\pi_{n}\hat{G}_{\alpha}(y_{n})]$.
Thus,
  \begin{align}
 \frac{\gamma_{n+1}}{2}\bignorm{y_{n}-v_{n+1}}^{2}=&\frac{1}{2\gamma_{n+1}}\left[(1-\pi_{n})^{2}\gamma_{n}^{2}\bignorm{v_{n}-y_{n}}^{2}+\pi_{n}^{2}\bignorm{\hat{G}_{\alpha}(y_{n})}^{2}  -2\pi_{n}(1-\pi_{n})\gamma_{n}\iproduct{v_{n}-y_{n}}{\hat{G}_{\alpha}(y_{n})}\right] \label{eq86}
  \end{align}
Substituting this into Equation~\eqref{eq78} and recalling that $\phi_{n}(x) = \phi_{n}\opt + \frac{\gamma_{n}}{2}\bignorm{x-v_{n}}^{2}$, we have
  \begin{align}
   \nonumber  \phi_{n+1}\opt  = {}& (1-\pi_{n})\left[\phi_{n}\opt+\frac{\gamma_{n}}{2}\bignorm{y_{n}-v_{n}}^{2}\right]+ \pi_{n} \left[F(x_{n+1})-\eta \bignorm{G_{\alpha}(y_{n})}^{2}\right]\\
\nonumber     & -\frac{1}{2\gamma_{n+1}}\left[(1-\pi_{n})^{2}\gamma_{n}^{2}\bignorm{v_{n}-y_{n}}^{2}+\pi_{n}^{2}\bignorm{\hat{G}_{\alpha}(y_{n})}^{2}  -2\pi_{n}(1-\pi_{n})\gamma_{n}\iproduct{v_{n}-y_{n}}{\hat{G}_{\alpha}(y_{n})}\right]\\
     \nonumber ={}& (1-\pi_{n})\phi_{n}\opt+\pi_{n}F(x_{n+1})-\eta \pi_{n}\bignorm{G_{\alpha}(y_{n})}^{2}\\
      & + \frac{\pi_{n}(1-\pi_{n})\gamma_{n}}{\gamma_{n+1}}\left[\frac{\mu}{2}\bignorm{v_{n}-y_{n}}^{2}+\iproduct{v_{n}-y_{n}}{\hat{G}_{\alpha}(y_{n})}\right] -\frac{\pi_{n}^{2}}{2\gamma_{n+1}}\bignorm{\hat{G}_{\alpha}(y_{n})}^{2}, \label{eq79}
  \end{align}
  where we use Equation~\eqref{eq85} to compute the coefficient of the term $\|v_{n}-y_{n}\|^{2}$.
\paragraph{Step 3} Now we show that $\mbE_{n-1}[\phi_{n}\opt]\geq\mbE_{n-1}[F(x_{n})]$, which also implies $\mbE[\phi_{n}\opt]\geq\mbE[F(x_{n})]$. According to Equation~\eqref{eq74}, this holds for $n=0$. We prove the conclusion by induction. Suppose that $\mbE_{n-1}[\phi_{n}\opt]\geq\mbE_{n-1}[F(x_{n})]$, then by Equation~\eqref{eq79} we have
  \begin{align}
  \nonumber  \mbE_{n}[\phi_{n+1}\opt]  = {}&  (1-\pi_{n})\mbE_{n}[\phi_{n}\opt]+ \mbE_{n}\left[\pi_{n} F(x_{n+1}) - \pi_{n}\eta \bignorm{G_{\alpha}(y_{n})}^{2} -\frac{\pi_{n}^{2}}{2\gamma_{n+1}}\bignorm{\hat{G}_{\alpha}(y_{n})}^{2}\right.\\
   \nonumber & \left. \quad \qquad \qquad \qquad\qquad+ \frac{\pi_{n}(1-\pi_{n})\gamma_{n}}{\gamma_{n+1}}\left[\frac{\mu}{2}\norm{v_{n}-y_{n}}^{2}+\iproduct{v_{n}-y_{n}}{\hat{G}_{\alpha}(y_{n})}\right]\right]\\
  \nonumber  \overset{\text{(Induction)}}{\geq}{}& (1-\pi_{n})\mbE_{n}[F(x_{n})]+ \mbE_{n}\left[\pi_{n} F(x_{n+1}) - \pi_{n}\eta \bignorm{G_{\alpha}(y_{n})}^{2} -\frac{\pi_{n}^{2}}{2\gamma_{n+1}}\bignorm{\hat{G}_{\alpha}(y_{n})}^{2}\right.\\
  \nonumber  & \left. \,\qquad\qquad \qquad \qquad\qquad+ \frac{\pi_{n}(1-\pi_{n})\gamma_{n}}{\gamma_{n+1}}\left[\frac{\mu}{2}\norm{v_{n}-y_{n}}^{2}+\iproduct{v_{n}-y_{n}}{\hat{G}_{\alpha}(y_{n})}\right]\right]\\
  \nonumber  \overset{\text{(Lemma~\ref{lemma:adaNAPG sufficient descent})}}{\geq}{}& (1-\pi_{n})\mbE_{n} \left[ F(x_{n+1}) + \frac{\mu}{2}\bignorm{y_{n}-x_{n}}^{2}-\eta\norm{G_{\alpha}(y_{n})}^{2}+\iproduct{\hat{G}_{\alpha}(y_{n})}{x_{n}-y_{n}} \right]\\
  \nonumber  &+ \mbE_{n}\left[\pi_{n} F(x_{n+1}) - \pi_{n}\eta \bignorm{G_{\alpha}(y_{n})}^{2} -\frac{\pi_{n}^{2}}{2\gamma_{n+1}}\bignorm{\hat{G}_{\alpha}(y_{n})}^{2}\right.\\
   \nonumber & \left.  \quad\qquad+ \frac{\pi_{n}(1-\pi_{n})\gamma_{n}}{\gamma_{n+1}}\left[\frac{\mu}{2}\norm{v_{n}-y_{n}}^{2}+\iproduct{v_{n}-y_{n}}{\hat{G}_{\alpha}(y_{n})}\right]\right]\\
   \nonumber \geq {} & \mbE_{n}\left[F(x_{n+1})- \eta \bignorm{G_{\alpha}(y_{n})}^{2}-\frac{\pi_{n}^{2}}{2\gamma_{n+1}}\bignorm{\hat{G}_{\alpha}(y_{n})}^{2}\right]\\
  \nonumber  & + (1-\pi_{n})\mbE_{n}\left[\iproduct{\hat{G}_{\alpha}(y_{n})}{x_{n}-y_{n}+\frac{\pi_{n}\gamma_{n}}{\gamma_{n+1}}(v_{n}-y_{n})}\right]. \label{eq80}
  \end{align}
Following similar arguments in the proof of Lemma 2 in~\cite{lei2024variance} we can show that $x_{n}-y_{n}+\frac{\pi_{n}\gamma_{n}}{\gamma_{n+1}}(v_{n}-y_{n})=0$. Combined with Equation~\eqref{eq72},
  \begin{align*}
    \mbE_{n}[\phi_{n+1}\opt] \geq{} & \mbE_{n}\left[F(x_{n+1})- \eta \bignorm{G_{\alpha}(y_{n})}^{2}-\frac{\pi_{n}^{2}}{2\gamma_{n+1}}\bignorm{\hat{G}_{\alpha}(y_{n})}^{2}\right]\\
    \geq{} & \mbE_{n}\left[F(x_{n+1})- \eta \bignorm{G_{\alpha}(y_{n})}^{2}-\frac{\pi_{n}^{2}(\theta^{2}+\nu^{2}+1)}{\gamma_{n+1}}\bignorm{G_{\alpha}(y_{n})}^{2}\right].
  \end{align*}
If choose $\alpha=[(\theta^{2}+\nu^{2}+1)L]^{-1}$, we have $\eta = \alpha[(\alpha L -2)(\theta^{2}+\nu^{2}+1)+\theta^{2}+\nu^{2}]=-1/L$. From Equations~\eqref{eq82} and~\eqref{eq85}, it holds that $\eta + \frac{\pi_{n}^{2}(\theta^{2}+\nu^{2}+1)}{\gamma_{n+1}}=0$. Hence, it holds that $\mbE_{n}[\phi_{n+1}\opt] \geq\mbE_{n}[F(x_{n+1})]$.

\paragraph{Step 4} Now we show that for any $z$, $\mbE[\phi_{n}(z)] \leq (1-\lambda_{n})F(z) + \lambda_{n}\phi_{0}(z)$ holds, where $\lambda_{n}=\prod_{i=1}^{n}(1-\pi_{i})$. We prove this by induction. In fact by Lemma~\ref{lemma:adaNAPG sufficient descent},
  \begin{align*}
    \mbE[\phi_{n+1}(z)] & = (1-\pi_{n})\mbE[\phi_{n}(z)] + \pi_{n}\mbE\left[F(x_{n+1})-\eta \bignorm{G_{\alpha}(y_{n})}^{2} + \frac{\mu}{2}\bignorm{x-y_{n}}^{2}+\iproduct{\hat{G}_{\alpha}(y_{n})}{x-y_{n}}\right]\\
    & \leq (1-\pi_{n})\mbE[\phi_{n}(z)] + \pi_{n}F(z)\\
    & = [1-(1-\pi_{n})\lambda_{n}]F(z) + (1-\pi_{n})\mbE[\phi_{n}(z)-(1-\lambda_{n})F(z)]\\
    & \leq (1-\lambda_{n+1})F(z) + (1-\pi_{n})\lambda_{n}\phi_{0}(z) = (1-\lambda_{n+1})F(z) + \lambda_{n+1}\phi_{0}(z).
  \end{align*}

\paragraph{Step 5} From Step 3 and Step 4 we have
  \begin{align*}
    \mbE[F(x_{n})] \leq  \mbE[\phi_{n}\opt]  \leq & \mbE[\min_{z}\phi_{n}(z)]
    \leq  \min_{z} \mbE[\phi_{n}(z)]
     \leq  \min_{z} (1-\lambda_{n})F(z) + \lambda_{n}\phi_{0}(z)\\ 
     \leq & (1-\lambda_{n})F(x\opt) + \lambda_{n}\phi_{0}(x\opt). \Halmos
  \end{align*}
\end{proof}

\subsection{Proof of Theorem~\ref{adaNAPG_convergence}}
Lemma~\ref{lemma:estimate sequence} shows that to prove Theorem~\ref{adaNAPG_convergence}, it is sufficient to compute the convergence rate of $\lambda_{n}$. Firstly, suppose that  $f$ in Problem~\eqref{problem} is only convex ($\mu=0$), we choose $\gamma_{0}=L$ in Equation~\eqref{eq74}. Then we can prove that $\gamma_{n}\geq L \lambda_{n}$ for all $n$ by induction, since
  $\gamma_{n+1} = (1-\pi_{n})\gamma_{n}+\pi_{n}\mu\geq (1-\pi_{n})L\lambda_{n}= L\lambda_{n+1}$. By Equation~\eqref{eq82} it holds that $L(\theta^{2}+\nu^{2}+1)\pi_{n}^{2}=\gamma_{n+1}\geq L\lambda_{n+1}$,
which implies that
\begin{equation*}
  \frac{\pi_{n}}{\sqrt{\lambda_{n+1}}} \geq \sqrt{\frac{1}{L(\theta^{2}+\nu^{2}+1)}}.
\end{equation*}
Denote $a_{n}=1/{\sqrt{\lambda_{n}}}$, then according to the similar arguments in the proof of Lemma 2.2.4 in \cite{nesterov2013introductory} one can get $a_{n} \geq 1+\frac{n}{2\sqrt{(\theta^{2}+\nu^{2}+1)}}$, which implies that
\[
\lambda_{n} \leq \frac{4(\theta^{2}+\nu^{2}+1)}{(2\sqrt{(\theta^{2}+\nu^{2}+1)}+n)^{2}}.
\]

Now suppose that $f$ in Problem~\eqref{problem} is strongly convex with parameter $\mu>0$, we choose $\gamma_{0}=\mu$ in Equation~\eqref{eq74}. Then it is easy to prove $\gamma_{n}\geq \mu$ for all $n$ by induction. Besides, we also have $\pi_{n} \geq \sqrt{\frac{\mu}{L(\theta^{2}+\nu^{2}+1)}}$. Recalling that $\lambda_{n}=\Pi_{i=1}^{n}(1-\pi_{n})$, we have
\begin{equation*}
  \lambda_{n} \leq \left(1-\sqrt{\frac{\mu}{L(\theta^{2}+\nu^{2}+1)}}\right)^{n}.
\end{equation*}
Hence Theorem~\ref{adaNAPG_convergence} is proved. \Halmos

\subsection{Proof of Theorem~\ref{adaNAPG_simulation}}
For the strongly convex setting, Algorithm~\ref{algorithm_adaNAPG} gives us
\begin{equation}\label{eq233}
    y_{n+1}-x\opt=x_{n+1}+\beta(x_{n+1}-x_{n}) - x\opt= (1+\beta)(x_{n+1}-x\opt) + (-\beta)(x_{n}-x\opt),
\end{equation}
where $\beta=(1-\sqrt{\alpha\mu})/(1+\sqrt{\alpha\mu})$. 
From Theorem~\ref{adaNAPG_convergence} we know that $\mbE[\|{x_n - x\opt}\|^{2}]=O(\rho^n)$ when the objective is strongly convex, which together with Equation\eqref{eq233} implies that $\mbE[\|{y_n - x\opt}\|^{2}]=O(\rho^n)$. Since the gradient mapping $G_{\alpha}(\cdot)$ is $(2\alpha + L)$-Lipschitz continuous~\citep[Lemma 10.10,][]{Beck2017First},  we have 
\begin{equation}\label{eq2334}
    \mbE\left[\bignorm{G_{\alpha}(y_{n})}^{2}\right]  =  \mbE\left[\bignorm{G_{\alpha}(y_{n})-G_{\alpha}(x\opt)}^{2}\right] 
    \leq (2\alpha + L)^{2}\mbE\left[\bignorm{y_{n}-x\opt}^{2}\right]=O(\rho^n).
\end{equation}
 Lemma 6.5 in \cite{Pasupathy2018sampling} further gives that $\mbE[\norm{G_{\alpha}(y_{n})}^{2}]=O_{\mathbb{P}}(\rho^{-n})$. Note that the left hand of Equation~\eqref{eq104} equals to $\frac{1}{K_{n}} \Var[g(y_{n},\xi_{n}^{1})]$ since $g(y_{n},\xi_{n}^{1})$ is unbiased.
Thus, from Test~\eqref{NAPG_test1} and Test~\eqref{NAPG_test2} we have that $K_{n} = \inf\{K\geq 1: g_{n} / \sigma^2 \leq \norm{G_{\alpha}(y_n)}^{2} (\theta^2+\nu^2)K/\sigma^2  \}$. According to the Lemma 1 in \cite{Chow1965asymptotic}, we have that
\begin{equation*}
    \lim_{n \to \infty} \frac{\theta^2+\nu^2}{\sigma^2}K_{n} \bignorm{G_{\alpha}(y_n)}^{2} = 1 \qquad \text{a.s.}
\end{equation*}
Combined with$\mbE[\norm{G_{\alpha}(y_{n})}^{2}]=O_{\mathbb{P}}(\rho^{-n})$, we have $K_{n} = O_{\mathbb{P}}(\rho^{-n})$ and thus $\Gamma_{n} = O_{\mathbb{P}}(\rho^{-n})$. To finish the proof, one may only need to notice that $\mbE[\|{x_n - x\opt}\|^{2}]=O(\rho^n)$ and again use Lemma 6.5 in \cite{Pasupathy2018sampling}. \Halmos

\section{Proofs in Section~\ref{sec:asymptotic}}\label{app:asymptotic}

In this section, we prove Lemma~\ref{lemma_linear_recursion} and Theorem~\ref{CLT_adaNAPG} in Section~\ref{sec:asymptotic}. Before that, we prove some auxiliary lemmas.

\subsection{Auxiliary Lemmas}
Lemma~\ref{lemma_G_Jacobian} gives a linear approximation of the gradient mapping with controlled error. For a matrix $A$, $\|A\|_{2}$ and $\|A\|_{F}$ represent its 2-norm and Frobenius norm, respectively.

\begin{lemma}\label{lemma_G_Jacobian}
  Suppose that Assumptions~\ref{assumption_problem} and~\ref{assum_f_sc}--\ref{assum_G_semismooth} hold. Let $x\opt$ be the optimal solution of Problem~\eqref{problem} and $0<\alpha \leq \frac{1}{L}$. Then for any $x$ there exits a matrix $H_x$ depending on $x$ such that $G_{\alpha}(x) = H_x(x-x\opt) + \delta(x)$, where $\delta(x\opt)=0$ and $\delta(x)=o(\|{x - x\opt}\|)$.
 Moreover, we have $\|{\mathbf{I}-\alpha H_x}\|_{2}\leq 1-\alpha\mu<1$ for all $x$.
\end{lemma}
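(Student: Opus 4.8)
The plan is to read the linear model off the Clarke generalized Jacobian of $G_\alpha$, using semismoothness for the remainder and the spectral structure of the Jacobian for the norm bound. Recall first that $G_\alpha(x\opt)=0$ because $x\opt$ is optimal for Problem~\eqref{problem}. I would define $H_x$ to be \emph{any} element of the Clarke generalized Jacobian $\partial G_\alpha(x)$ and set $\delta(x) := G_\alpha(x) - H_x(x-x\opt)$. Since $G_\alpha$ is semismooth at $x\opt$ (Assumption~\ref{assum_G_semismooth}), the defining inequality of semismoothness states that for every $V\in\partial G_\alpha(x)$,
\[
G_\alpha(x) - G_\alpha(x\opt) - V(x-x\opt) = o(\norm{x-x\opt}),
\]
which, because $G_\alpha(x\opt)=0$, is exactly $\delta(x)=o(\norm{x-x\opt})$; evaluating at $x=x\opt$ gives $\delta(x\opt)=0$ at once. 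This disposes of the approximation claim, so the real content is the \emph{uniform} spectral bound $\norm{\mathbf{I}-\alpha H_x}_{2}\le 1-\alpha\mu$.

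For that bound I would exploit the factored structure of the Jacobian. Write $G_\alpha = \tfrac1\alpha(\mathrm{id}-P\circ g)$ with $P:=\prox_{\alpha h}$ and $g(x):=x-\alpha\nabla f(x)$. Because $f$ is twice differentiable (Assumption~\ref{assum_f_sc}), $g$ is $C^1$ with Jacobian $\mathbf{I}-\alpha\nabla^2 f(x)$, so the Clarke chain rule gives that every $H_x\in\partial G_\alpha(x)$ satisfies
\[
\mathbf{I}-\alpha H_x = M\,(\mathbf{I}-\alpha\nabla^2 f(x)) \qquad\text{for some } M\in\partial P(g(x));
\]
the set $\{M(\mathbf{I}-\alpha\nabla^2 f(x)):M\in\partial P(g(x))\}$ is already convex (the right factor is a fixed matrix), so the chain-rule inclusion collapses to exactly this form with no convex-hull closure lost. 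I then bound the two factors separately. First, since $P$ is the proximal map of a convex function it is the gradient of the convex $C^{1,1}$ Moreau envelope, so every $M\in\partial P$ is symmetric positive semidefinite; combined with $1$-Lipschitzness (nonexpansiveness) of $P$, its eigenvalues lie in $[0,1]$ and hence $\norm{M}_{2}\le 1$. Second, from $\mu\mathbf{I}\prec\nabla^2 f(x)\prec L\mathbf{I}$ and $0<\alpha\le 1/L$, the symmetric matrix $\mathbf{I}-\alpha\nabla^2 f(x)$ has eigenvalues in $(1-\alpha L,\,1-\alpha\mu)\subseteq[0,\,1-\alpha\mu)$, whence $\norm{\mathbf{I}-\alpha\nabla^2 f(x)}_{2}\le 1-\alpha\mu$. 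Submultiplicativity of the operator norm then yields $\norm{\mathbf{I}-\alpha H_x}_{2}\le\norm{M}_{2}\,\norm{\mathbf{I}-\alpha\nabla^2 f(x)}_{2}\le 1-\alpha\mu<1$, uniformly in $x$.

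The main obstacle I anticipate is the nonsmooth-analysis bookkeeping rather than any single hard estimate. One must (a) justify the Clarke chain rule in the stated form, using that the inner map $g$ is $C^1$ so that the inclusion reduces to the clean factorization above, and (b) establish symmetry, positive semidefiniteness, and the $\le 1$ spectral bound for the generalized Jacobian of a convex proximal operator; both are standard in the semismooth/variational-analysis literature and can be cited from \citep{Facchinei2003Finite}. The delicate point is \emph{alignment}: I must confirm that the single element $H_x$ furnished by the chain rule is the very element used in the semismoothness estimate, so that one matrix $H_x$ simultaneously produces the $o(\norm{x-x\opt})$ remainder and the spectral bound. Once that identification is pinned down, the residual work is only the eigenvalue arithmetic sketched above.
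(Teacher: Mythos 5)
Your proposal is correct and follows essentially the same route as the paper: semismoothness of $G_{\alpha}$ at $x\opt$ supplies the $o(\norm{x-x\opt})$ remainder, and the Clarke chain-rule factorization $\mathbf{I}-\alpha H_x = M(\mathbf{I}-\alpha\nabla^{2}f)$, combined with the norm bound $\norm{M}_2\le 1$ for generalized Jacobians of the (nonexpansive) proximal map and the eigenvalue bound $\norm{\mathbf{I}-\alpha\nabla^{2}f}_2\le 1-\alpha\mu$, gives the spectral estimate. The only minor difference is that you take $H_x\in\partial G_{\alpha}(x)$ via the standard superlinear-approximation characterization of semismoothness, whereas the paper takes $H_x\in\partial G_{\alpha}(x\opt)$ through the directional derivative (Lemmas~\ref{lemma_semismooth1} and~\ref{lemma_semismooth3}) and applies the chain rule at $x\opt$; both choices are valid and yield the same uniform bound since Assumption~\ref{assum_f_sc} holds for all $x$.
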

\begin{proof}{Proof of Lemma~\ref{lemma_G_Jacobian}}
Since $\alpha \leq \frac{1}{L}$ and $\mu\leq L$, we have
\begin{equation}\label{eq91}
    \norm{\mathbf{I}-\alpha\nabla^{2}f(x)}_{2}\leq \max\{||1-\alpha L||,|1-\alpha \mu|\}=1-\alpha \mu.
\end{equation}
  Since $G_{\alpha}(x)$ is Lipschitz continuous, the Clarke generalized Jacobian $\partial G_{\alpha}(x)$ at $x\opt$ exists. By assumption, $G_{\alpha}(x)$ is semismooth at $x\opt$, thus according to Lemma~\ref{lemma_semismooth1} we have $G_{\alpha}(x) - G_{\alpha}(x\opt) = G_{\alpha}^{\prime}(x\opt; x-x\opt) + o(\|{x_n - x\opt}\|)$. From Lemma~\ref{lemma_semismooth3}, there exists a $H_{x} \in \partial G_{\alpha}(x\opt)$ such that $H_{x}(x-x\opt)=G_{\alpha}^{\prime}(x\opt; x-x\opt)$. Thus 
    $ G_{\alpha}(x) = H_{x}(x-x\opt) + o(\norm{x-x\opt}),$
    where we use the fact that $G_{\alpha}(x\opt)=0$. Next we calculate the bound of $\norm{\mathbf{I}-\alpha H_{x}}_2$. To simplify the notation, define an auxiliary function $ s(x):=\prox_{\alpha h}(x)$. Note that $s(x)$ is Lipschitz continuous. Moreover, according to Proposition 2.6.2 in \cite{clarke1990optimization} for any $J \in \partial s(x)$ one has $\norm{J}_{F}\leq 1$. 
Recall that the gradient mapping $G_{\alpha}(x)$ is defined as $G_{\alpha}(x)=\frac{1}{\alpha}[x-\prox_{\alpha h}(x-\alpha \nabla f(x))]$. By Theorem 2.6.6 in \cite{clarke1990optimization} we know that the Generalized Jacobian of $G_{\alpha}(x)$ at $x\opt$ satisfies 
    \begin{equation*}
      \partial G_{\alpha}(x\opt) \subseteq \co \left\{ \frac{1}{\alpha} \left[\mathbf{I} - J(\mathbf{I}-\alpha\nabla^{2}f(x\opt))\right] \mid J \in \partial s(x\opt - \alpha\nabla f(x\opt)) \right\},
    \end{equation*}
where $\co S$ denotes the convex hull of set $S$. Hence for any $H \in \partial G_{\alpha}(x\opt)$, it takes the form of $ H = \sum_{i}\lambda_{i}V_{i}$, where $\lambda_{i} \geq 0$, $\sum_{i}\lambda_{i}=1$ and $V_{i}=\frac{1}{\alpha} [\mathbf{I} - J_{i}(\mathbf{I}-\alpha\nabla^{2}f(x\opt))]$ for some $J_{i} \in \partial s(x\opt - \alpha\nabla f(x\opt))$. Now one may have
\[ 
        \bignorm{\mathbf{I}-\alpha H}_{2} = \left\|{\mathbf{I}-\alpha \sum_{i}\lambda_{i}V_{i}}\right\|_{2} =  \bignorm{\sum_{i}\lambda_{i}J_{i}(\mathbf{I}-\alpha\nabla^{2}f(x\opt))}_{2} 
        \leq \bignorm{\sum_{i}\lambda_{i}J_{i}}_{2} \bignorm{\mathbf{I}-\alpha\nabla^{2}f(x\opt)}_{2}.
\]
Since for any matrix $M$ it holds that $\norm{M}_{2}\leq \norm{M}_{F}$ (this is because 2-norm is the largest singular value of $M$ while F-norm is the square root of the sum of all squared singular values of $M$), we have $\norm{J_{i}}_{2} \leq 1$, and thus $\norm{\sum_{i}\lambda_{i}J_{i}}_{2}\leq 1$. And  Equation~\eqref{eq91} gives $\norm{\mathbf{I}-\alpha H}_{2} \leq \norm{\mathbf{I}-\alpha\nabla^{2}f(x\opt)}_{2} \leq 1-\alpha \mu$. Thus, Lemma~\ref{lemma_G_Jacobian} is proved. \Halmos
\end{proof}

Next Lemma~\ref{PG_w_lemma} shows that under Assumption~\ref{assum_clt_noise}, the sequence $\{w_{n}\}$ has similar properties to $\{\tilde{w}_{n}\}$. 
\begin{lemma}\label{PG_w_lemma}
    Suppose that Assumptions~\ref{assumption_gradient} and~\ref{assum_clt_noise} hold. Then the error sequence $w_{n}$ satisfies the following:
    \begin{enumerate}
    \item[(i)] There exists a semi-definite matrix $S_{0}$ such that 
    \begin{equation}\label{eq53}
        \lim_{n \to \infty}  \rho^{-n} \mbE[w_{n}w_{n}^{\top}] = S_{0}.
    \end{equation}
    \item[(ii)] The following Lindeberg's condition holds.
    \begin{equation}\label{eq54}
        \lim_{r \to \infty}\sup_{n}\mbE \left[\norm{\rho^{-n/2}w_{n}}^{2}\mathbb{I}_{[\norm{\rho^{-n/2}w_{n}}>r]} \right] = 0.
    \end{equation}
\end{enumerate}
\end{lemma}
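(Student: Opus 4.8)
The plan is to transfer the two properties of the gradient‐estimation noise $\{\tilde{w}_n\}$ (given in Assumption~\ref{assum_clt_noise}) to the gradient‐mapping error $\{w_n\}$ by exploiting the key deterministic inequality $\norm{w_n}\leq\norm{\tilde{w}_n}$, which was already established in the proof of Lemma~\ref{lemma:adaNAPG sufficient descent} via the nonexpansiveness of the proximal operator. The subtle point is that part (i) of Assumption~\ref{assum_clt_noise} is a statement about the full \emph{matrix} $\mbE[\tilde{w}_n\tilde{w}_n^\top]$, not merely its trace, so a norm comparison alone cannot yield the limiting matrix $S_0$. To get a genuine second-moment limit I would need a linear relationship between $w_n$ and $\tilde{w}_n$, not just a norm bound.

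First I would establish such a linear relationship. Recall $w_n=\hat{G}_\alpha(y_n)-G_\alpha(y_n)=\tfrac1\alpha[\prox_{\alpha h}(y_n-\alpha\nabla f(y_n))-\prox_{\alpha h}(y_n-\alpha\nabla f(y_n)-\alpha\tilde{w}_n)]$. Since the proximal map of a convex function is (Clarke) directionally differentiable / semismooth with generalized Jacobians of Frobenius norm at most $1$ (the same facts already invoked in the proof of Lemma~\ref{lemma_G_Jacobian}), I would expand $\prox_{\alpha h}$ around the argument $u_n:=y_n-\alpha\nabla f(y_n)$. Because $y_n\to x\opt$ and hence $u_n\to u\opt:=x\opt-\alpha\nabla f(x\opt)$, and $\tilde{w}_n\to 0$ in the appropriate scaled sense, I would argue that $w_n = J_n\,\tilde{w}_n + r_n$, where $J_n$ is a measurable selection from $\partial\,\prox_{\alpha h}(u_n)$ (so $\norm{J_n}_F\le 1$) and $r_n$ is a lower-order remainder coming from the semismoothness expansion and the variation of the Jacobian as $u_n\to u\opt$. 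Passing to the limit, $J_n$ concentrates near $\partial\,\prox_{\alpha h}(u\opt)$.

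With that representation, part (i) follows by writing $\rho^{-n}\mbE[w_nw_n^\top]=\rho^{-n}\mbE[J_n\tilde{w}_n\tilde{w}_n^\top J_n^\top]+(\text{cross and remainder terms})$, showing the cross/remainder terms vanish after scaling by $\rho^{-n}$ (using $\norm{r_n}=o(\norm{\tilde{w}_n})$ together with Equation~\eqref{eq51} and the uniform bound $\norm{J_n}_F\le1$), and identifying $S_0=J\opt\tilde{S}_0(J\opt)^\top$ for the limiting selection $J\opt$; positive semidefiniteness of $S_0$ is then immediate. For part (ii), the Lindeberg condition~\eqref{eq54} follows almost directly from~\eqref{eq52}: since $\norm{\rho^{-n/2}w_n}\le\norm{\rho^{-n/2}\tilde{w}_n}$ pointwise, the event $\{\norm{\rho^{-n/2}w_n}>r\}$ is contained in $\{\norm{\rho^{-n/2}\tilde{w}_n}>r\}$, so $\norm{\rho^{-n/2}w_n}^2\mathbb{I}_{[\norm{\rho^{-n/2}w_n}>r]}\le\norm{\rho^{-n/2}\tilde{w}_n}^2\mathbb{I}_{[\norm{\rho^{-n/2}\tilde{w}_n}>r]}$, and taking $\sup_n$ then $r\to\infty$ gives~\eqref{eq54}.

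I expect the main obstacle to be making the linear expansion $w_n=J_n\tilde{w}_n+r_n$ rigorous and controlling the remainder $r_n$ in the correct scaled $L^2$ sense. The difficulty is twofold: the Jacobian selection $J_n$ depends on the random point $u_n$ and need not converge pointwise (the generalized Jacobian is set-valued and possibly discontinuous), and the semismoothness remainder is only $o(\norm{\cdot})$ along directions, so I must verify that after multiplying by $\rho^{-n}$ and taking expectations the remainder still vanishes — this requires combining the directional-derivative/semismoothness assumption (Assumption~\ref{assum_G_semismooth}) with a uniform integrability argument analogous to the Lindeberg bound. If the clean matrix identity $S_0=J\opt\tilde S_0(J\opt)^\top$ proves hard to pin down because $J_n$ fails to converge, a fallback is to extract convergent subsequences of $\{J_n\}$ (possible since $\norm{J_n}_F\le1$ lies in a compact set) and argue the limit $S_0$ is independent of the subsequence using uniqueness of the limit in~\eqref{eq53}; part (ii), by contrast, is routine given the domination by $\tilde{w}_n$.
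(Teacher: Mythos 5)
Your proposal follows essentially the same route as the paper: part (ii) is proved identically via the nonexpansiveness bound $\norm{w_n}\le\norm{\tilde w_n}$ and the containment of indicator events, and part (i) via a first-order expansion of the prox-difference map in $\tilde w_n$ (the paper writes $w_n=W(\tilde w_n)$, Taylor-expands $W$ at $0$, absorbs the remainder using $\mbE[\norm{\tilde w_n}^2]=O(\rho^n)$ from the sampling tests, and identifies $S_0=\nabla W(0)^\top\tilde S_0\,\nabla W(0)$). If anything, your version is more careful than the paper's own proof, which treats $W$ as a single differentiable function and glosses over both the nondifferentiability of the proximal operator and the dependence of the linearization on the random point $y_n$ --- precisely the obstacles you flag and propose to handle via generalized Jacobian selections with $\norm{J_n}_F\le 1$ and a compactness/subsequence argument.
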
 
\begin{proof}{Proof of Lemma~\ref{PG_w_lemma}}
    By the definition of $w_{n}$,
\begin{align}
    \nonumber        w_{n} & = \hat{G}_{\alpha}(y_{n})-G_{\alpha}(y_{n}) \\
    \nonumber    & = \frac{1}{\alpha}\left[y_{n}-\prox_{\alpha h}(y_{n}-\alpha(\nabla f(y_{n})+\tilde{w}_{n}))\right] - \frac{1}{\alpha}\left[y_{n}-\prox_{\alpha h}(y_{n}-\alpha\nabla f(y_{n}))\right] \\
     & = \frac{1}{\alpha}\left[\prox_{\alpha h}(y_{n}-\alpha\nabla f(y_{n})) - \prox_{\alpha h}(y_{n}-\alpha(\nabla f(y_{n})+\tilde{w}_{n}))\right],\label{eq55}
         \end{align}
    which shows that $w_{n}$ is Lipschitz continuous on $\tilde{w}_{n}$ since the proximal operator itself is Lipschitz continuous. To simplify the notation, we define the function between $w_{n}$ and $\tilde{w}_{n}$ as $W$, that is, $w_{n} = W(\tilde{w}_{n})$. Noting that $W(0)=0$, the Taylor expansion of $W$ at point $0$ is given by $W(x)=W(0) + \nabla W(0)^{\top}x + o(\norm{x}) = \nabla W(0)^{\top}x + o(\norm{x})$. Thus, together with $\mbE[\tilde{w}_{n}]=0$, we have 
    \begin{align}
   \nonumber   \rho^{-n} \mbE[w_{n}w_{n}^{\top}] &= \rho^{-n} \mbE[W(\tilde{w}_{n})W(\tilde{w}_{n})^{\top}] \\
    \nonumber & =\rho^{-n} \mbE[(\nabla W(0)^{\top}\tilde{w}_{n} + o(\norm{\tilde{w}_{n}}))(\nabla W(0)^{\top}\tilde{w}_{n} + o(\norm{\tilde{w}_{n}}))^{\top}]  \\
    & =\rho^{-n} \mbE[\nabla W(0)^{\top}\tilde{w}_{n}\tilde{w}_{n}^{\top}W(0) +o(\norm{\tilde{w}_{n}}^{2})] \label{eq:ww_top}
    \end{align}
Adding tests \eqref{NAPG_test1} and \eqref{NAPG_test2} together and taking Equation~\eqref{eq2334} in to account gives
      \begin{equation*}
      \mbE\left[\bignorm{\tilde{w}_{n}}^{2}\right] \leq (\theta^{2} + \nu^{2})\mbE\left[\bignorm{G_{\alpha}(y_{n})}^{2}\right] = O(\rho^{n}),
      \end{equation*}
which implies that  $\mbE[\norm{\tilde{w}_{n}}^{2}]$ is in order of $O(\rho^{n})$. 
Together with Equation~\eqref{eq51} and \eqref{eq:ww_top}, we obtain
\begin{align*}
    \lim_{n \to \infty} \rho^{-n} \mbE[w_{n}w_{n}^{\top}] & = \lim_{n \to \infty} \rho^{-n} \mbE[\nabla W(0)^{\top}\tilde{w}_{n}\tilde{w}_{n}^{\top}W(0) +o(\norm{\tilde{w}_{n}}^{2})]  \\ 
    &  = \lim_{n \to \infty} \rho^{-n} W(0)^{\top} \mbE[\tilde{w}_{n}\tilde{w}_{n}^{\top}]W(0)  = W(0)^{\top} \tilde{S}_{0} W(0),
\end{align*}
which proves Equation~\eqref{eq53}. Again with Equation~\eqref{eq55} and the Lipschitz continuity of the proximal operator, we have $\norm{w_{n}}  \leq \norm{\tilde{w}_{n}}$. Thus for any $r>0$, we have $ \{ \norm{w_{n}} > r \} \subset \{ \norm{\tilde{w}_{n}} > r \}$, 
which implies that $\mathbb{I}_{\{ \norm{w_{n}} > r \}} \leq \mathbb{I}_{\{ \norm{\tilde{w}_{n}} > r \}}$. Hence we have
\[\mbE \left[\norm{\rho^{-n/2}w_{n}}^{2}\mathbb{I}_{[\norm{\rho^{-n/2}w_{n}}>r]} \right]\leq \mbE \left[\norm{\rho^{-n/2}\tilde{w}_{n}}^{2}\mathbb{I}_{[\norm{\rho^{-n/2}\tilde{w}_{n}}>r]} \right],
\]
which, together with Equation~\eqref{eq55}, implies that \eqref{eq54} holds. \Halmos
\end{proof}

The following Lemma~\ref{lemma_linear_recursion} shows that the error sequence $\{\zeta_n\}$ in Lemma~\ref{lemma_linear_recursion} is asymptotically negligible. The notation $o_{\mathbb{P}}(1)$ is short for a sequence that converges to zero in probability.

\begin{lemma}\label{lemma:zeta_n}
    The sequence $\{\zeta_n\}$ defined in Lemma~\ref{lemma_linear_recursion} is $o_{\mathbb{P}}(1)$.
\end{lemma}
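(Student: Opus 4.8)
The plan is to show that $\rho^{-(n+1)/2}\delta_n$ converges to zero in probability, from which $\zeta_n = -\alpha\rho^{-(n+1)/2}\delta_n = o_{\mathbb{P}}(1)$ follows immediately since $\alpha$ is a fixed constant. Here $\delta_n=\delta(y_n)$ is the linear-approximation error furnished by Lemma~\ref{lemma_G_Jacobian} at the point $y_n$ (so that $G_{\alpha}(y_n)=H_{y_n}(y_n-x\opt)+\delta(y_n)$), which satisfies $\delta(x\opt)=0$ and $\delta(x)=o(\norm{x-x\opt})$ as $x\to x\opt$. The difficulty is that this little-$o$ statement is purely \emph{local} and deterministic, whereas we must multiply $\delta_n$ by the exploding factor $\rho^{-(n+1)/2}$; the core idea is to factor $\delta_n$ so as to separate a piece that vanishes in probability from a piece that is merely bounded in probability.

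First I would introduce the scalar function
\begin{equation*}
  g(x) := \begin{cases} \norm{\delta(x)}/\norm{x-x\opt}, & x\neq x\opt,\\ 0, & x=x\opt, \end{cases}
\end{equation*}
so that $\norm{\delta_n}=g(y_n)\norm{y_n-x\opt}$. Because $\delta(x)=o(\norm{x-x\opt})$, the function $g$ is continuous at $x\opt$ with $g(x\opt)=0$. This yields the factorization
\begin{equation*}
  \rho^{-(n+1)/2}\norm{\delta_n}=g(y_n)\cdot\rho^{-(n+1)/2}\norm{y_n-x\opt},
\end{equation*}
and it suffices to control each factor separately.

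For the second factor, recall that in the proof of Theorem~\ref{adaNAPG_simulation} we established $\mbE[\norm{y_n-x\opt}^2]=O(\rho^n)$. Hence $\mbE[(\rho^{-n/2}\norm{y_n-x\opt})^2]=\rho^{-n}\mbE[\norm{y_n-x\opt}^2]$ is uniformly bounded in $n$, so by Markov's inequality the sequence $\rho^{-n/2}\norm{y_n-x\opt}$ is tight, i.e.\ $\rho^{-(n+1)/2}\norm{y_n-x\opt}=\rho^{-1/2}\cdot\rho^{-n/2}\norm{y_n-x\opt}=O_{\mathbb{P}}(1)$. For the first factor, the same $L^2$ bound forces $\norm{y_n-x\opt}\to 0$ in probability, so $y_n\to x\opt$ in probability; since $g$ is continuous at $x\opt$ with $g(x\opt)=0$, the continuous mapping theorem gives $g(y_n)\to 0$ in probability, that is $g(y_n)=o_{\mathbb{P}}(1)$.

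Combining the two factors via the elementary rule $o_{\mathbb{P}}(1)\cdot O_{\mathbb{P}}(1)=o_{\mathbb{P}}(1)$ shows $\rho^{-(n+1)/2}\norm{\delta_n}=o_{\mathbb{P}}(1)$, and therefore $\zeta_n=o_{\mathbb{P}}(1)$. I expect the only genuine obstacle to be the bookkeeping around the local nature of the little-$o$: one must check that the factorization through $g$ is valid (handling the null event $\{y_n=x\opt\}$ by the definition $g(x\opt)=0$) and that continuity of $g$ at $x\opt$ indeed follows from $\delta(x)=o(\norm{x-x\opt})$ rather than from any stronger differentiability. Everything else reduces to the standard interplay of the $O_{\mathbb{P}}$ and $o_{\mathbb{P}}$ calculus.
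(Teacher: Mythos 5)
Your proposal is correct and follows essentially the same route as the paper: the paper factors $\delta_n = D_n(y_n-x\opt)$ with the rank-one matrix $D_{n} = \delta(y_n)(y_{n}-x\opt)^{\top}/\norm{y_{n}-x\opt}^{2}$ (whose operator norm is exactly your $g(y_n)$), shows $D_n = o_{\mathbb{P}}(1)$ from $y_n \to x\opt$ in probability and $\delta(x)=o(\norm{x-x\opt})$, and shows $\rho^{-n/2}(y_n-x\opt)=O_{\mathbb{P}}(1)$ from the $L^2$ bound of Theorem~\ref{adaNAPG_convergence} via Chebyshev's inequality, concluding with the same $o_{\mathbb{P}}(1)\cdot O_{\mathbb{P}}(1)$ calculus. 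Your scalar factorization through $g$ and use of the continuous mapping theorem is a cosmetic variant of the same argument.
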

\begin{proof}{Proof of Lemma~\ref{lemma:zeta_n}}
     Define $D_{n} = \frac{\delta(y_n)(y_{n}-x\opt)^{\top}}{\norm{y_{n}-x\opt}^{2}}$ if  $y_{n} \neq x\opt$ and $D_n=0$ otherwise and define $\bar{e}_n=\rho^{-n/2}(y_n-x\opt)$.  Here $\delta(\cdot)$ is defined in Lemma~\ref{lemma_G_Jacobian}. Then we have
\begin{equation}\label{eq64}
  \delta(y_{n})=D_{n}(y_{n}-x\opt),
\end{equation}
which implies that $ \zeta_{n}=-\alpha\rho^{-(n+1)/2}D_{n}(y_{n}-x\opt)=-\alpha \rho^{-1/2} D_{n}\bar{e}_{n}$. In the following, we will separately prove that $\bar{e}_n=O_{\mathbb{P}}(1)$ and $D_{n}=o_{\mathbb{P}}(1)$. Then $\zeta_{n}=O_{\mathbb{P}}(1)o_{\mathbb{P}}(1)=o_{\mathbb{P}}(1)$~\citep[Section 2.2,][]{VanderVaart2000Asymptotic}.

Firstly, we show that $D_{n}=o_{\mathbb{P}}(1)$. Theorem~\ref{adaNAPG_convergence} together with Equation~\eqref{eq233} shows $\mbE[\norm{y_{n}-x\opt}^{2}]\to 0$, and thus $y_{n}-x\opt$ converges to $0$ in probability because of the Markov's inequality. Then by using $\delta(x\opt)=0$, $\delta(y)=o(\norm{y-x\opt})$ from Lemma~\ref{lemma_G_Jacobian}, we have $\delta(y_{n})=o_{\mathbb{P}}(\norm{y_{n}-x\opt})$. Together with Equation~\eqref{eq64}, $D_{n}(y_{n}-x\opt)=o_{\mathbb{P}}(\norm{y_{n}-x\opt})$. 
Recall that the definition of $o_{\mathbb{P}}(\cdot)$ is ``$ X_{n}=o_{\mathbb{P}}(R_{n}) \Leftrightarrow X_{n}=Y_{n}R_{n} \text{ and } Y_{n}$ converges to $0$ in probability "~\citep[Section 2.2,][]{VanderVaart2000Asymptotic}. Thus, $D_{n}=o_{\mathbb{P}}(1)$.

Then, we prove that $\bar{e}_n=O_{\mathbb{P}}(1)$. For any $n\geq 0$, Theorem~\ref{adaNAPG_convergence} shows that
\[
\Var[\bar{e}_n]\leq \mbE[\norm{\bar{e}_n}^{2}] = \mbE[\norm{y_{n}-x\opt}^{2}/\rho^{n}] \leq v_{e}^{2},
\]
for some constant $v_{e}$.
From Chebyshev’s inequality we know that if $X$ is a random variable with mean $\mu$ and variance $\sigma^{2}$, then for any real number $u>0$, $\Pr{\norm{X-\mu}\leq u\sigma} \geq 1-u^{-2}$. Setting $u=\chi^{-1/2}$ for any $\chi \in (0,1)$ then we have $\Pr{\norm{\bar{e}_n-\mbE[\bar{e}_n]}\leq \chi^{-1/2}\sqrt{\Var[\bar{e}_n]}} \geq 1-\chi$. By Jensen's inequality, we have $ \norm{\mbE[\bar{e}_n]} \leq \mbE[\norm{\bar{e}_n}] \leq \sqrt{\mbE[\norm{\bar{e}_n}^{2}]} \leq v_{e}$. Thus, we have $\norm{\bar{e}_n-\mbE[\bar{e}_n]} \geq \norm{\bar{e}_n}-\norm{\mbE[\bar{e}_n]}  \geq \norm{\bar{e}_n}-v_{e}$, which implies that $\left\{ \norm{\bar{e}_n-\mbE[\bar{e}_n]} \leq \chi^{-1/2}v_{e}  \right\} \subset \left\{ \norm{\bar{e}_n} \leq v_{e} + \chi^{-1/2}v_{e}\right\}$. Moreover, since $\sqrt{\Var[\bar{e}_n]} \leq v_{e}$, it follows that 
\[
\left\{ \norm{\bar{e}_n-\mbE[\bar{e}_n]} \leq  \chi^{-1/2}\sqrt{\Var[\bar{e}_n]}  \right\} \subset \left\{ \norm{\bar{e}_n-\mbE[\bar{e}_n]} \leq  \chi^{-1/2}v_{e}  \right\} \subset \left\{ \norm{\bar{e}_n} \leq v_{e} + \chi^{-1/2}v_{e}\right\},
\]
which implies
$ \Pr{\norm{\bar{e}_n} \leq v_{e} + \chi^{-1/2}v_{e}} \geq \Pr{\norm{\bar{e}_n-\mbE[\bar{e}_n]} \leq  \chi^{-1/2}\sqrt{\Var[\bar{e}_n]} } \geq 1-\chi$. So $\bar{e}_n$ is bounded in probability. \Halmos
\end{proof}

Lemma~\ref{lemma_sum_converge} gives a basic result of sequence limit.
\begin{lemma}\label{lemma_sum_converge}
  Let $0<a<1$ be a real number and $\{b_{t}\}$ be a sequence such that $b_{t} \to 0$ as $t \to \infty$. Then we have $\sum_{t=1}^{k}a^{k-t}b_{t} \to 0 $ as $k \to \infty$. 
  \end{lemma}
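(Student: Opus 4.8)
The plan is to prove this by a standard $\varepsilon$--$N$ splitting argument, the same device underlying Toeplitz/Ces\`aro summation. First I would record that a convergent sequence is bounded, so there exists $M>0$ with $|b_t|\le M$ for all $t$. Fixing an arbitrary $\varepsilon>0$, the hypothesis $b_t\to 0$ furnishes an index $N$ such that $|b_t|<\varepsilon$ whenever $t>N$. The whole point is that $N$ depends only on $\varepsilon$ and is held fixed while $k\to\infty$, so I would split the sum at this $N$:
\begin{equation*}
  \left| \sum_{t=1}^{k} a^{k-t} b_{t} \right| \le \sum_{t=1}^{N} a^{k-t} |b_{t}| + \sum_{t=N+1}^{k} a^{k-t} |b_{t}|.
\end{equation*}

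Next I would bound the two pieces separately. For the head $\sum_{t=1}^{N} a^{k-t}|b_t|$, each of the finitely many terms is at most $M a^{k-t}$ with $t\le N$ fixed, so the entire head is dominated by $M a^{k-N}\sum_{j=0}^{N-1} a^{j}\le M a^{k-N}/(1-a)$; since $0<a<1$ this tends to $0$ as $k\to\infty$. For the tail, I would use $|b_t|<\varepsilon$ for $t>N$ together with the geometric series bound $\sum_{j=0}^{k-N-1} a^{j}\le \sum_{j=0}^{\infty}a^{j}=1/(1-a)$, giving $\sum_{t=N+1}^{k} a^{k-t}|b_t|\le \varepsilon/(1-a)$ uniformly in $k$.

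Combining the two estimates and taking $\limsup_{k\to\infty}$ annihilates the head and leaves $\limsup_{k\to\infty}\bigl|\sum_{t=1}^{k}a^{k-t}b_t\bigr|\le \varepsilon/(1-a)$. Since $\varepsilon>0$ is arbitrary, the $\limsup$ is $0$, and as the expression is nonnegative its limit is $0$, which is the claim. There is no genuine obstacle here; the only conceptual point worth emphasizing is the asymmetry exploited in the split: the head has a \emph{fixed} number of terms whose geometric weights $a^{k-t}$ all vanish as $k\to\infty$, whereas the tail has a \emph{growing} number of terms that are individually small, and the summability of the geometric weights keeps their aggregate contribution below $\varepsilon/(1-a)$.
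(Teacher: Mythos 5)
Your proposal is correct and follows essentially the same route as the paper's proof: both split the sum at the index beyond which $|b_t|<\varepsilon$, bound the finitely many head terms using boundedness of $\{b_t\}$ and the vanishing geometric weights, and bound the tail by $\varepsilon/(1-a)$ via the geometric series. The only cosmetic difference is that you phrase the conclusion through a $\limsup$ argument while the paper absorbs both pieces into a single $(M+\tfrac{1}{1-a})\varepsilon$ bound for large $k$.
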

  \begin{proof}{Proof of Lemma~\ref{lemma_sum_converge}} 
  Since $b_{t}\to0$ as $t \to \infty$, for any $\varepsilon>0$ there exits a $T>0$ such that when $t>T$, $|b_{t}|\leq \varepsilon$. Also, $b_{t}$ is bounded by some $M>0$, that is, $|b_{t}|\leq M$ for all $t$. Then for $k$ large enough,
    \begin{align*}
      \left|\sum_{t=1}^{k}a^{k-t}b_{t}\right| & = \left|\sum_{t=1}^{T}a^{k-t}b_{t} + \sum_{t=T+1}^{k}a^{k-t}b_{t}\right|
       \leq \sum_{t=1}^{T}a^{k-t}|b_{t}| + \sum_{t=T+1}^{k}a^{k-t}|b_{t}| \\
      & \leq M\sum_{t=1}^{T}a^{k-t} + \varepsilon \sum_{t=T+1}^{k}a^{k-t}  = M\sum_{t=k-T}^{k-1}a^{t} + \varepsilon \sum_{t=0}^{k-T-1}a^{t}
    \end{align*}
  Since $0<a<1$, we can verify that $\sum_{t=0}^{k-T-1}a^{t} < \sum_{t=0}^{\infty}a^{t}=\frac{1}{1-a}$ and $ \sum_{t=k-T}^{k-1}a^{t} \leq \varepsilon $ for some $k$ large enough. Thus, for sufficiently large $k$ we have $\left|\sum_{t=1}^{k}a^{k-t}b_{t}\right| \leq (M+\frac{1}{1-a})\varepsilon$. \Halmos
\end{proof}

\subsection{Proof of Lemma~\ref{lemma_linear_recursion}}
With notation $G_{\alpha}(y_{n})$ and $w_{n}$, the iteration in Algorithm~\ref{algorithm_adaNAPG} can be rewritten as $x_{n+1}=y_{n}-\alpha(G_{\alpha}(y_{n})+w_{n})$. It follows that $x_{n+1}-x\opt = (\mathbf{I}-\alpha H_{y_n})(y_{n}-x\opt)-\alpha(\delta(y_n)+w_{n})$, where $H_{y_n}$ and $\delta(y_n)$ are defined in Lemma~\ref{lemma_G_Jacobian} and we use $H_n$ and $\delta_{n}$ in the following for simplicity. Also $y_{n}-x\opt=x_{n}+\beta(x_{n}-x_{n-1})-x\opt=(1+\beta)(x_{n}-x\opt)-\beta(x_{n-1}-x\opt)$, which implies that
  \begin{align*}
    x_{n+1}-x\opt & = (\mathbf{I}-\alpha H_{n})[(1+\beta)(x_{n}-x\opt)-\beta(x_{n-1}-x\opt)]-\alpha(\delta_{n}+w_{n})\\
    & = (1+\beta)(\mathbf{I}-\alpha H_{n})(x_{n}-x\opt) - \beta(\mathbf{I}-\alpha H_{n})(x_{n-1}-x\opt)-\alpha(\delta_{n}+w_{n}).
  \end{align*}
Based on the above two equations, we obtain
\begin{equation*}
  \begin{pmatrix}
    x_{n+1}- x\opt \\ x_{n} - x\opt
  \end{pmatrix} = 
  \begin{pmatrix}
    (1+\beta)(\mathbf{I}-\alpha H_{n}) & \beta(\alpha H_{n}-\mathbf{I})\\\mathbf{I} & 0
\end{pmatrix}
  \begin{pmatrix}
    x_{n}- x\opt \\ x_{n-1} - x\opt
  \end{pmatrix} - \alpha
  \begin{pmatrix}
    \delta_{n}+w_{n} \\ 0
  \end{pmatrix}.
\end{equation*}
Noting the definition of  $e_{n}$ and $P_{n}$, we have
\begin{equation*}
    e_{n+1}  = P_{n}e_{n} - \alpha\rho^{-(n+1)/2}\begin{pmatrix}
      \delta_{n}+w_{n} \\ 0
    \end{pmatrix} \\
   = P_{n}e_{n} - \alpha\rho^{-(n+1)/2}\begin{pmatrix}
      w_{n} \\ 0
    \end{pmatrix} + \begin{pmatrix} \zeta_{n}\\0 \end{pmatrix}.\Halmos
\end{equation*}

\subsection{Proof of Theorem~\ref{CLT_adaNAPG}}

The proof is inspired by \cite{lei2024variance}. Define the sequence $\{u_{n}\}$ as  $u_{n+1} = P_{n}u_{n} - \alpha \rho^{-(n+1)/2}w_{n}$ with $u_{0}=0$. By combining the definition of $\{e_{n}\}$ and $\{u_{n}\}$, we obtain the recursion $ e_{n+1}-u_{n+1}=P_{n}\left(e_{n}-u_{n}\right)+\zeta_{n}$. Define  $\Psi_{n,t}:=P_{n}P_{n-1}\cdots P_{t}$ and $ \Psi_{t-1,t}:=\mathbf{I}$. According to Equation (3.1.8) of \cite{chen2005stochastic}, Assumption~\ref{assum_spectral radius_Pn} implies that there exits some constant $c$ and $0<\hat{\kappa}<1$ such that $\norm{\Psi_{n,t}}_{2}\leq c \hat{\kappa}^{n-t+1}$. With the notation $\Psi_{n,t}$ we have $e_{n+1}-u_{n+1}=\Psi_{n,0}(e_{0}-u_{0}) + \sum_{t=0}^{n}\Psi_{n,t+1}\zeta_{t}$.
Together with Lemma~\ref{lemma:zeta_n}, it holds that
\begin{equation*}\label{eq92}
    \bignorm{e_{n+1}-u_{n+1}} \leq \bignorm{\Psi_{n,0}}_{2}\bignorm{e_{0}-u_{0}} + \sum_{t=0}^{n}\bignorm{\Psi_{n,t+1}}_{2} \bignorm{\zeta_{t}} \leq c \hat{\kappa}^{n+1}\bignorm{e_{0}-u_{0}} + \sum_{t=0}^{n} c \hat{\kappa}^{n-t+1} o_{P}(1), 
\end{equation*}
which converges to $0$ in probability as $n$ tends to $+\infty$. According to the fact that ``$X_{n} \to X$ in distribution together with $\norm{X_{n}-Y_{n}}\to0$ in probability means $Y_{n}\to X$ in distribution "~\citep{VanderVaart2000Asymptotic}, $\{e_{n}\}$ and $\{u_{n}\}$ have the same limit distribution if exists. Hence, it is sufficient to consider the sequence$\{u_{n}\}$.

From the definition of $\{u_{n}\}$, we have
\begin{equation*}
        u_{n+1}  =  P_{n}u_{n} - \alpha \rho^{-(n+1)/2}w_{n}\ = \Psi_{n,0}u_{0} - \alpha \sum_{t=0}^{n}\Psi_{n,t+1}\rho^{-(t+1)}w_{t} =- \alpha \sum_{t=0}^{n}\Psi_{n,t+1}\rho^{-(t+1)}w_{t},
\end{equation*}
which implies $\alpha^{-1}u_{n} = -\sum_{t=0}^{n-1}\Psi_{n-1,t+1}\rho^{-(t+1)}w_{t} = -\sum_{t=0}^{n}\Psi_{n-1,t}\rho^{-t}w_{t-1}$. Define $\xi_{n,t}:=-\Psi_{n-1,t} \rho^{-t/2}  w_{t-1}$ for any $1\leq t \leq n$. Now we begin to check the conditions \eqref{eq56}--\eqref{eq60} and apply Lemma~\ref{double-indexed CLT}. By the definition of $w_n$, we have
\begin{equation*}
    w_n  = \hG_{\alpha}(y_n) - G_{\alpha}(y_n)  = \frac{1}{\alpha}[\prox_{\alpha h}(y_n-\alpha \nabla f(y_n)) - \prox_{\alpha h}(y_n-\alpha \nabla f(y_n) - \alpha \tilde{w}_n)]. 
\end{equation*}
Since the proximal operator is Lipschitz continuous, according to Lemma~\ref{lemma_mean_value} there exist some matrix $V_n$ such that
    \begin{align*}
    w_n & = \frac{1}{\alpha}[\prox_{\alpha h}(y_n-\alpha \nabla f(y_n)) - \prox_{\alpha h}(y_n-\alpha \nabla f(y_n) - \alpha \tilde{w}_n)]  \\
     & = \frac{1}{\alpha} V_{n}[(y_n-\alpha \nabla f(y_n)) - (y_n-\alpha \nabla f(y_n) - \alpha \tilde{w}_n)]  = V_n \tilde{w}_n.
    \end{align*}
Since $\mbE_{t}[\tilde{w}_t]=0$ (it comes from the unbiasedness of the gradient estimator), it holds that $\mbE_{t}[w_{t}]=0$. Thus, we have $\mbE[\xi_{n,t} \mid \xi_{n1}, \cdots, \xi_{n,t-1}]=0$, which means Equation~\eqref{eq56} holds. By using $\norm{\Psi_{n-1,t}}\leq c \hkappa^{n-t}$, it holds that
    \begin{align*}
        \mbE\left[\norm{\xi_{n,t}}^{2}\mid\xi_{n1}, \cdots, \xi_{n,t-1}\right] 
       & \leq \mbE\left[\norm{\Psi_{n-1,t}}_{2}^{2}\rho^{-t}\norm{w_{t-1}}^{2}\mid\xi_{n1}, \cdots, \xi_{n,t-1}\right] \\
       & \leq c^{2}\rho^{-t}\hkappa^{2(n-t)} \mbE_{t}\left[\norm{w_{t-1}}^{2}] \leq c^{2}(\theta^{2}+\nu^{2})\rho^{-t}\hkappa^{2(n-t)} \mbE_{t}[\norm{G_{\alpha}(y_{t-1})}^{2}\right],
    \end{align*}
Equation~\eqref{eq2334} further gives that ,
    \begin{align*}
    \mbE[\norm{\xi_{n,t}}^{2}] 
        & \leq c^{2}(\theta^{2}+\nu^{2})(2\alpha+L)^{2} \hkappa^{2(n-t)}\rho^{-t} * O(\rho^{t-1})  \leq  \hkappa^{2(n-t)} O(1).
    \end{align*}
Hence, it holds that
\begin{equation*}
    \sum_{t=1}^{n}\mbE[\norm{\xi_{n,t}}^{2}] \leq \sum_{t=1}^{n} \hkappa^{2(n-t)} O(1) \leq  \frac{1}{1-\hkappa^{2}} O(1),
\end{equation*}
which implies that $\sup_{n} \sum_{t=1}^{n}\mbE[\norm{\xi_{n,t}}^{2}] < \infty$, that is, Equation~\eqref{eq57} holds.
Since $\xi_{n,t}=-\Psi_{n-1,t} \rho^{-t/2}  w_{t-1}$ and $\{w_{t}\}$ are independent, we have $  R_{nt}=\mbE[\xi_{n,t}\xi_{n,t}^{\top}\mid \xi_{n1},\cdots,\xi_{n,t-1}]=\mbE[\xi_{n,t}\xi_{n,t}^{\top}]=S_{nt}$, which verifies the condition \eqref{eq59}. We now verify the condition \eqref{eq58},
    \begin{align}
     \nonumber   S_{n}&=\sum_{t=1}^{n}S_{nt} =\sum_{t=1}^{n}\mbE[\xi_{n,t}\xi_{n,t}^{\top}]
         = \sum_{t=1}^{n}\mbE[\Psi_{n-1,t} \rho^{-t} w_{t-1}w_{t-1}^{\top}]\Psi_{n-1,t}^{\top}\\
        & = \sum_{t=1}^{n}\mbE[\Psi_{n-1,t} S_{0} \Psi_{n-1,t}^{\top}] + \sum_{t=1}^{n} \mbE[\Psi_{n-1,t} (\rho^{-t} w_{t-1}w_{t-1}^{\top}-S_{0})\Psi_{n-1,t}^{\top}].\label{eq61}
    \end{align}

For the second term in the right hand side of Equation~\eqref{eq61}, we have 
  \begin{align*}
  \left \| \sum_{t=1}^{n} \mbE \left[\Psi_{n-1,t} (\rho^{-t} w_{t-1}w_{t-1}^{\top}-S_{0})\Psi_{n-1,t}^{\top} \right] \right \|_{2}  & \leq \sum_{t=1}^{n}\mbE\left[ \left\|{\Psi_{n-1,t}}\right\|_{2}^{2}  \left\|{\rho^{-t} w_{t-1}w_{t-1}^{\top}-S_{0}}\right\|_{2}\right]\\
  & \leq  \sum_{t=1}^{n}c^{2}\hkappa^{2(n-t+1)}\mbE \left[\left\|{\rho^{-t} w_{t-1}w_{t-1}^{\top}-S_{0}}\right\|_{2}\right] 
  \end{align*}
By Equation~\eqref{eq53} and Lemma~\ref{lemma_sum_converge}, the second term on the right hand side of Equation~\eqref{eq61} converges to $0$. For the first term in the right hand side of Equation~\eqref{eq61}, we have
\begin{equation*}
\sum_{t=1}^{n}\norm{\Psi_{n-1,t} S_{0} \Psi_{n-1,t}^{\top}}_{2}\leq \norm{S_{0}}_{2} \sum_{t=0}^{n}\norm{\Psi_{n-1,t}}_{2}^{2} \leq c^{2}\norm{S_{0}}_{2} \sum_{t=0}^{n}\hkappa^{2n}\leq c^{2}\norm{S_{0}}_{2} / (1-\hkappa^{2}).
\end{equation*}
Since $\sum_{t=1}^{n}\norm{\Psi_{n-1,t}  S_{0} \Psi_{n-1,t}^{\top}}_{2}$ is monotonically increasing and bounded, its limit exists.  Thus, from Equation~\eqref{eq61} we can conclude that the limit of $S_{n}$ exists and condition~\eqref{eq58} holds. Besides we denote this limit as $\Sigma$. Finally, we verify the condition \eqref{eq60}. By $\xi_{n,t}=-\Psi_{n-1,t} \rho^{-t/2} w_{t-1}$, we have $\norm{\xi_{n,t}} \leq \norm{\Psi_{n-1,t}}_{2}\rho^{-t/2} \norm{w_{t-1}} \leq c\hkappa^{n-t} \rho^{-t/2}  \norm{w_{t-1}}$. Thus we have 
\begin{equation}\label{eq62}
\sum_{t=1}^{n}\mbE\left[\norm{\xi_{n,t}}^{2}\mathbb{I}_{[\norm{\xi_{n,t}}>\varepsilon]} \right]
    \leq \sum_{t=1}^{n}\kappa^{2(n-t)}\mbE \left[\norm{\rho^{-t/2}w_{t-1}}^{2}\mathbb{I}_{[\norm{\rho^{-t/2}w_{t-1}}>c^{-1} \hkappa^{t-n}\varepsilon]}\right]
\end{equation}
since for any $\varepsilon>0$, 
\[
 \{\norm{\xi_{n,t}}>\varepsilon\}  \subset \{c\hkappa^{n-t} \rho^{-t/2}  \norm{w_{t-1}} >\varepsilon\} = \{ \rho^{-t/2}  \norm{w_{t-1}} > c^{-1}\hkappa^{t-n}\varepsilon\}.
\]
By Equation~\eqref{eq54} we know that  $\sup_{n}\mbE \left[\norm{\rho^{-n/2}w_{n}}^{2}\mathbb{I}_{[\norm{\rho^{-n/2}w_{n}}>r]} \right] \to 0$ as $r \to 0$. Because for any $t\geq 1$, $\kappa^{t-n}\to \infty$ as $n \to \infty$. Thus,
\[
\sup_{t}\mbE \left[\norm{\rho^{-t/2}w_{t-1}}^{2}\mathbb{I}_{[\norm{\rho^{-t/2}w_{t-1}}>c^{-1}\hkappa^{t-n}\varepsilon]}\right] \to 0 \quad \text{ as } n \to 0.
\]
Together with Lemma~\ref{lemma_sum_converge} and Equation~\eqref{eq62}, it follows that $  \lim_{n \to \infty} \sum_{t=1}^{n}\mbE[\norm{\xi_{n,t}}^{2}\mathbb{I}_{[\norm{\xi_{n,t}}>\varepsilon]}] = 0$. We have verified all the conditions of Lemma~\ref{double-indexed CLT}, and thus $\alpha^{-1}u_{n}$ converges to  $\mathrm{Normal}(0,\Sigma)$ in distribution for some covariance matrix $\Sigma$. Since $e_{n}$ and $u_{n}$ have the same limit distribution, Theorem~\ref{CLT_adaNAPG} is proved.\Halmos

\end{APPENDICES}

\bibliographystyle{informs2014} 
\bibliography{sample} 
\ECSwitch

\ECHead{E-companion  for ``Boosting Accelerated Proximal Gradient Method with Adaptive Sampling for Stochastic Composite Optimization"}
\vspace{5mm}

\section{Auxiliary Nonsmooth Analysis Results}\label{EC:Auxiliary Nonsmooth Analysis Results}

In this section, we review some basic results of nonsmooth analysis.
These results are useful in the proof presented in Appendix~\ref{app:asymptotic}. Let $\partial G(x)$ be the generalized Jacobian defined in 2.6 of \citeec{clarke1990optimization}. Now we give the definition of \textit{semismooth} functions.

\begin{definition}
    We say that a function $G$ is semismooth at $x$ if $G$ is locally Lipschitz at $x$ and
    \begin{equation*}
        \lim_{{\begin{array}{c}V\in\partial G(x+ty^{\prime})\\y^{\prime}\to y,t\downarrow0\end{array}}}  \{V{y}^{\prime}\}
    \end{equation*}
    exists for any direction $y \in \mbR^{d}$.
\end{definition}

The following several lemmas summarize some basic properties of semismooth functions. See \citeec{Facchinei2003Finite} for more details.

\begin{lemma}\label{lemma_semismooth1}
    If $G$ is semismooth at $x$, then for any $y \to 0$ we have
    \begin{equation*}
        G(x+y)-G(x)=G^{\prime}(x;y)+o(\norm{y}),
    \end{equation*}
    where $G^{\prime}(x;y)$ is the classic directional derivative defined as
    \begin{equation*}
        G^{\prime}(x;y)=\lim_{t\downarrow 0}\frac{G(x+ty)-G(x)}{t}.
    \end{equation*}
\end{lemma}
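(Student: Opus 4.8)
The plan is to prove the first-order expansion by contradiction, converting increments of $G$ into products of a generalized Jacobian with the increment direction via the mean value theorem, and then invoking the defining limit of semismoothness. Before that I would record two preliminary facts that semismoothness buys us. First, the directional derivative $G'(x;d)$ exists for every unit direction $d$ and equals the semismoothness limit: fixing $d$ and applying the mean value theorem along the ray $t\mapsto x+td$ yields, for each small $t>0$, a matrix $V_t$ in $\co\{\,V : V\in\partial G(z),\, z\in[x,x+td]\,\}$ with $G(x+td)-G(x)=V_t(td)$, so that $[G(x+td)-G(x)]/t=V_td$; since the base points $x+\tau d\to x$ while the direction stays exactly $d$, the existence of the limit in the definition of semismoothness forces $V_td$ to converge, giving existence of $G'(x;d)$ and its identification with that limit. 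Second, the map $d\mapsto G'(x;d)$ is Lipschitz: from local Lipschitz continuity of $G$ with modulus $\ell_G$ one has $\norm{[G(x+\tau d)-G(x)]/\tau-[G(x+\tau d')-G(x)]/\tau}\le \ell_G\norm{d-d'}$, and letting $\tau\downarrow0$ gives $\norm{G'(x;d)-G'(x;d')}\le \ell_G\norm{d-d'}$.

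For the main estimate, suppose it fails. Then there exist $\varepsilon>0$ and a sequence $y_k\to0$ with $\norm{G(x+y_k)-G(x)-G'(x;y_k)}\ge \varepsilon\norm{y_k}$. Writing $y_k=t_kd_k$ with $t_k=\norm{y_k}\to0$ and $\norm{d_k}=1$, I would pass to a subsequence along which $d_k\to d$ for some unit vector $d$, using compactness of the unit sphere. Applying the mean value theorem on $[x,x+y_k]$ produces a matrix $V_k$ in the generalized Jacobian at a segment point $x+s_kd_k$, with $0\le s_k\le t_k$, such that $[G(x+y_k)-G(x)]/t_k=V_kd_k$. Dividing the contradiction hypothesis by $t_k$ and using positive homogeneity $G'(x;y_k)=t_kG'(x;d_k)$ then gives $\norm{V_kd_k-G'(x;d_k)}\ge\varepsilon$ for all $k$.

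Finally I would drive both terms to the common value $G'(x;d)$. Since $s_k\to0$ and $d_k\to d$, the pair $(x+s_kd_k,\,d_k)$ fits precisely the pattern appearing in the definition of semismoothness, so $V_kd_k\to G'(x;d)$ by the first preliminary fact; and $G'(x;d_k)\to G'(x;d)$ by the Lipschitz continuity from the second fact. Hence $V_kd_k-G'(x;d_k)\to0$, contradicting $\norm{V_kd_k-G'(x;d_k)}\ge\varepsilon$, and the lemma follows.

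I expect the main obstacle to be the precise form of the mean value theorem for the \emph{vector-valued} map $G$: Lebourg's theorem is stated for scalar functions, and its vector extension only yields $G(x+y_k)-G(x)\in\co\{\,Vy_k : V\in\partial G(z),\, z\in[x,x+y_k]\,\}$ rather than a single clean product $V_ky_k$. To extract a usable representative I would either argue componentwise on the scalar functions $t\mapsto\langle a,G(x+td_k)\rangle$ via the chain-rule inclusion for the Clarke subdifferential and reassemble, or carry the convex combination through the limit directly; the latter works because the existence of the semismoothness limit is a \emph{uniform} statement, so every element $V_{k,i}$ of the combination (each lying at a base point $x+s_{k,i}d_k$ with $s_{k,i}\to0$) satisfies $V_{k,i}d_k\to G'(x;d)$, whence the whole convex combination converges to $G'(x;d)$ as well. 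Keeping the base points tending to $x$ with controlled direction throughout this reduction is where the real care is required.
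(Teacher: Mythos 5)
Your argument is correct: it is the classical proof of this fact (Proposition 2.1 of Qi and Sun, 1993), which the paper itself does not reprove but simply imports from \citeec{Facchinei2003Finite}. The one genuine subtlety is exactly the one you flag at the end --- the vector-valued mean value theorem only delivers a convex combination $\sum_i \alpha_i H_i$ of generalized Jacobians at segment points rather than a single matrix --- and your resolution is the standard one: the semismoothness limit is uniform over all $V \in \partial G(x+ty')$ with $t \downarrow 0$ and $y' \to d$, so each $H_{k,i}d_k \to G'(x;d)$ and the convex combination follows; note that the paper's Lemma~\ref{lemma_mean_value} supplies precisely the vector-valued mean value theorem you need for this step.
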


\begin{lemma}\label{lemma_semismooth3}
    Suppose that $G$ is a locally Lipschitz function and $G^{\prime}(x;y)$ exists for any direction $y$ at $x$. Then for any $y$ there exists a $V \in \partial G(x)$ such that
    \begin{equation*}
        Vy=G^{\prime}(x;y).
    \end{equation*}
\end{lemma}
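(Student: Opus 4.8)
The plan is to prove the stronger set membership $G'(x;y) \in \partial G(x)\,y$, where $\partial G(x)\,y := \{Vy : V \in \partial G(x)\}$. Because $\partial G(x)$ is convex and compact and $V \mapsto Vy$ is linear, the image $\partial G(x)\,y$ is a convex compact subset of $\mbR^{m}$; once membership is established, the desired $V$ with $Vy = G'(x;y)$ exists by definition of the image.

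First I would settle the scalar case $m=1$, where $\partial G(x)$ is the Clarke subdifferential and $\partial G(x)\,y = [\,-G^{\circ}(x;-y),\,G^{\circ}(x;y)\,]$, with $G^{\circ}(x;v) := \limsup_{z\to x,\,t\downarrow 0}(G(z+tv)-G(z))/t$ the Clarke generalized directional derivative. Choosing $z=x$ in this $\limsup$ gives $G'(x;y)\le G^{\circ}(x;y)$, while evaluating $G^{\circ}(x;-y)$ along the admissible sequence $z=x+ty\to x$ yields $(G(x)-G(x+ty))/t \to -G'(x;y)$, hence $G'(x;y)\ge -G^{\circ}(x;-y)$. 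Thus $G'(x;y)$ lies in the interval $\partial G(x)\,y$, which proves the scalar case.

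For general $m$ I would argue by contradiction via strict separation. If $G'(x;y)\notin\partial G(x)\,y$, there is $d\in\mbR^{m}$ with $\langle d, G'(x;y)\rangle > \max_{V\in\partial G(x)}\langle d, Vy\rangle = \max_{V\in\partial G(x)}\langle V^{\top}d, y\rangle$. Apply the scalar case to $g(\cdot):=\langle d, G(\cdot)\rangle$, which is locally Lipschitz with $g'(x;y)=\langle d, G'(x;y)\rangle$; this gives $\langle d, G'(x;y)\rangle = g'(x;y)\le g^{\circ}(x;y) = \max_{\zeta\in\partial g(x)}\langle \zeta, y\rangle$. Finally, the scalarization inclusion $\partial g(x)=\partial(d^{\top}G)(x)\subseteq\{V^{\top}d : V\in\partial G(x)\}$, obtained from Clarke's chain rule (Theorem 2.6.6 in \citeec{clarke1990optimization}) applied to the linear map $d^{\top}$, bounds the right-hand side by $\max_{V}\langle V^{\top}d, y\rangle$, contradicting the strict separation. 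Hence $G'(x;y)\in\partial G(x)\,y$, as claimed; this recovers the standard result in \citeec{Facchinei2003Finite}.

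The main obstacle is the scalarization inclusion $\partial(d^{\top}G)(x)\subseteq d^{\top}\partial G(x)$: since the generalized Jacobian of a vector-valued map is built from limits of full Jacobian matrices rather than assembled component-wise, one cannot treat the coordinates of $G$ separately, and the inclusion must be extracted from Clarke's calculus for the linear composition $d^{\top}\!\circ G$. Everything else---the two elementary $\limsup$ comparisons in the scalar case and the separating-hyperplane step---is routine once this inclusion is in hand.
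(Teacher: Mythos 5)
Your proof is correct. Note that the paper itself gives no proof of this lemma: it appears in Appendix~\ref{EC:Auxiliary Nonsmooth Analysis Results} as one of several quoted facts with only a pointer to Facchinei and Pang (2003), so there is no in-paper argument to compare against. What you wrote is the standard self-contained derivation of that textbook result: the scalar case via the two elementary comparisons $-G^{\circ}(x;-y)\le G^{\prime}(x;y)\le G^{\circ}(x;y)$ together with the identification $\partial G(x)\,y=[-G^{\circ}(x;-y),\,G^{\circ}(x;y)]$, and then the reduction of the vector case to the scalar one by strictly separating the point $G^{\prime}(x;y)$ from the convex compact set $\partial G(x)\,y$. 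You also correctly flag the one step that genuinely needs care, the scalarization inclusion $\partial(d^{\top}G)(x)\subseteq\{V^{\top}d:\ V\in\partial G(x)\}$; this does hold, either by Clarke's chain rule applied to the composition of $G$ with the linear functional $d^{\top}$, or directly from the definition of the generalized Jacobian, since by Rademacher's theorem one may compute the limiting gradients of $d^{\top}G$ along points where $G$ itself is differentiable, where $\nabla(d^{\top}G)=JG^{\top}d$. With that inclusion in hand the separation step closes the argument as you describe.
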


Next lemma is a generalization of the mean value theorem for nonsmooth functions. 
\begin{lemma}\label{lemma_mean_value}
     Let a function $G:\Omega \subset \mbR^{n}\to \mbR^{m}$ be Lipschitz continuous on an open set $\Omega$ containing the segment $[x, \bar{x}]$. There exist $m$ points $z^i$ in $(x, \bar{x})$ and $m$ scalars $\alpha_i \geq 0$ summing to unity such that
    \begin{equation*}
        G(\bar{x})=G(x)+\sum_{i=1}^{m}\alpha_{i}H_{i}(\bar{x}-x),
    \end{equation*}
    where, for each $i$, $H_{i} \in \partial G(z^{i})$.
\end{lemma}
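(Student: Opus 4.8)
The plan is to reduce the vector-valued statement to the classical scalar Lebourg mean value theorem via a separation argument, and then sharpen the resulting Carath\'eodory representation from the generic $m+1$ points down to exactly $m$. Write $d := \bar{x}-x$ and introduce the candidate set
\begin{equation*}
  K := \co\left\{ H d : z \in (x,\bar{x}),\ H \in \partial G(z) \right\} \subseteq \mbR^{m},
\end{equation*}
where $\co$ denotes the convex hull. The entire assertion amounts to showing that $G(\bar{x})-G(x)\in K$ and that the representing elements may be chosen to number exactly $m$, all located in the open segment $(x,\bar{x})$.

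First I would establish the membership $G(\bar{x})-G(x)\in K$ by contradiction. If it failed, then (noting that $K$ is closed, since the Lipschitz bound on $G$ keeps the generators bounded and upper semicontinuity of $\partial G$ keeps the set closed) the separating hyperplane theorem in $\mbR^{m}$ would furnish a vector $c\in\mbR^{m}$ and a scalar $\gamma$ with $\iproduct{c}{G(\bar{x})-G(x)} > \gamma \geq \iproduct{c}{v}$ for every $v\in K$; in particular $\iproduct{c}{Hd}\leq \gamma$ for all $z\in(x,\bar{x})$ and all $H\in\partial G(z)$. I would then consider the scalar Lipschitz function $\psi(t):=\iproduct{c}{G(x+td)}$ on $[0,1]$, apply the scalar Lebourg mean value theorem (\citeec{clarke1990optimization}) to obtain $s\in(0,1)$ with $\psi(1)-\psi(0)\in\partial\psi(s)$, and invoke the Clarke chain rule for the composition, $\partial\psi(s)\subseteq \{\iproduct{c}{Hd} : H\in\partial G(x+sd)\}$. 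Since $\psi(1)-\psi(0)=\iproduct{c}{G(\bar{x})-G(x)}$ and $x+sd\in(x,\bar{x})$, this would force $\iproduct{c}{G(\bar{x})-G(x)}\leq\gamma$, contradicting the strict inequality. Hence $G(\bar{x})-G(x)\in K$, which already yields a representation as a convex combination of generators of the stated form.

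Finally I would refine the count of representing points. Plain Carath\'eodory in $\mbR^{m}$ only guarantees $m+1$ points, so to reach $m$ I would verify that the generating set $S:=\{Hd : z\in[x,\bar{x}],\ H\in\partial G(z)\}$ is \emph{connected and compact}: boundedness follows from the Lipschitz constant of $G$ (which uniformly bounds $\norm{H}$ for $H\in\partial G(z)$), while closedness and connectedness follow from the upper semicontinuity of the generalized Jacobian with nonempty convex compact values, composed with the continuous linear map $H\mapsto Hd$, together with connectedness of the segment $[x,\bar{x}]$. With $S$ connected, the Fenchel--Bunt strengthening of Carath\'eodory's theorem expresses any point of $\co S$ as a convex combination of at most $m$ points of $S$, producing points $z^{i}\in(x,\bar{x})$, matrices $H_{i}\in\partial G(z^{i})$, and weights $\alpha_{i}\geq 0$ with $\sum_{i}\alpha_{i}=1$ such that $G(\bar{x})-G(x)=\sum_{i=1}^{m}\alpha_{i}H_{i}(\bar{x}-x)$, as claimed.

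The main obstacle I anticipate is this last refinement: obtaining exactly $m$ rather than $m+1$ points is not automatic, and it requires both the Fenchel--Bunt connectedness improvement and a careful verification that $S$ is genuinely connected and compact along the segment, which leans on the convex-compact-valuedness and upper semicontinuity of $\partial G$. A secondary technical point is ensuring the representing points land in the \emph{open} segment $(x,\bar{x})$ rather than at the endpoints; this can be arranged by forming $K$ over the open segment and using a limiting argument, since the separation step already localizes the relevant Jacobians strictly inside. By contrast, the separation-plus-scalar-mean-value portion is comparatively routine once the scalarization chain rule $\partial(\iproduct{c}{G})\subseteq \iproduct{c}{\partial G\,(\cdot)}$ is in hand.
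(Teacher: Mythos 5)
First, a point of comparison: the paper does not prove this lemma at all --- it is quoted in the e-companion as a known auxiliary result from the nonsmooth analysis literature (it is Proposition~7.1.16 in Facchinei and Pang, 2003, and descends from Clarke's vector mean value inclusion, Proposition~2.6.5 in Clarke, 1990). Your architecture --- scalarize with a separating vector $c$, apply Lebourg's scalar mean value theorem to $\psi(t)=\iproduct{c}{G(x+td)}$ via the chain rule $\partial\psi(t)\subseteq c^{\top}\partial G(x+td)(\bar{x}-x)$, then invoke the Fenchel--Bunt refinement of Carath\'eodory for connected sets to cut $m+1$ points down to $m$ --- is exactly the standard textbook route, and your verification that the generator set is connected (u.s.c.\ map with nonempty compact convex, hence connected, values over a connected segment) is sound.

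There is, however, a genuine gap at the membership step, and it sits precisely where you claim closedness of $K$. Upper semicontinuity of $\partial G$ yields compactness of $\bigcup_{z\in[x,\bar{x}]}\partial G(z)(\bar{x}-x)$ over the \emph{closed} segment; over the \emph{open} segment the union need not be closed, since limit points as $z\to x$ may fall into $\partial G(x)(\bar{x}-x)$, which is excluded from your generating set. Consequently your separation argument, run correctly, only proves $G(\bar{x})-G(x)\in \operatorname{cl}\co\{H(\bar{x}-x): z\in(x,\bar{x}),\,H\in\partial G(z)\}$, because separation requires first passing to the closure; membership in the closed convex hull is strictly weaker than the exact convex combination of interior-point generators that the lemma asserts (compare $S=(0,1)\subset\mbR$: the point $1$ lies in $\operatorname{cl}\co S$ but not in $\co S$). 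Your proposed patch --- ``a limiting argument'' on shrunken segments $[x+\epsilon d,\bar{x}-\epsilon d]$ --- does not repair this: the representing points $z^{i}_{\epsilon}$ can drift to the endpoints as $\epsilon\downarrow 0$, which is exactly what must be ruled out. The honest options are either to state the inclusion over the closed segment (Clarke's form) and accept endpoint Jacobians, or to carry out the finer face/dimension-induction argument (due to Hiriart-Urruty) showing that whenever $G(\bar{x})-G(x)$ lies on a supporting face of the hull, the scalar Lebourg theorem supplies a generator on that same face at a strictly interior point, allowing an induction that keeps all $m$ points inside $(x,\bar{x})$. Note that once genuine membership in the (unclosed) hull over the open segment is secured, Fenchel--Bunt --- which needs only connectedness, not compactness --- delivers the $m$ interior points automatically; so the entire difficulty of the lemma is concentrated in the step your proposal treats as a ``secondary technical point.''
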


\section{Double-indexed CLT}\label{EC:DCLT}
We present the double-indexed central limit theorem~\citep{chen2005stochastic} used in the proof of Theorem~\ref{CLT_adaNAPG}.
\begin{lemma}\label{double-indexed CLT}
  Let $\{\xi_{ki},1\leq i \leq k, k=1,2,\dots \}$ be an array of random vectors. Denote
$S_{ki}:=\mbE[\xi_{ki}\xi_{ki}^{\top}]$, $S_{k}:=\sum_{i=1}^{k}S_{ki}$, and
$R_{ki}:=\mbE[\xi_{ki}\xi_{ki}^{\top} \mid \xi_{k1},\xi_{k2},\cdots,\xi_{k,i-1}]$. Assume
\begin{subequations}
\begin{align}
     \mbE[\xi_{ki} \mid \xi_{k1},\xi_{k2},\cdots,\xi_{k,i-1}]&=0, \label{eq56} \\
      \sup_{k \geq 1}\sum_{i=1}^{k}\mbE[\norm{\xi_{ki}}^{2}]&<\infty,\label{eq57}\\
     \lim_{k \to \infty}S_{k}&=S, \label{eq58}\\
      \lim_{k\to \infty}\sum_{i=1}^{k}\mbE[\norm{S_{ki}-R_{ki}}]&=0,\label{eq59}\\
      \lim_{k\to \infty}\sum_{i=1}^{k}\mbE[\norm{\xi_{ki}}^{2}\mathbb{I}_{\{\norm{\xi_{ki}}>\varepsilon\}}]&=0, \quad \forall \varepsilon>0.\label{eq60}
\end{align}
\end{subequations}
  Then, $\sum_{i=1}^{k}\xi_{ki}\Rightarrow\mathrm{Normal}(0,S)$, where $\mathrm{Normal}(\mu,S)$ denotes the normal distribution with mean $\mu$ and covariance $S$.
\end{lemma}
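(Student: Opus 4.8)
The plan is to prove this via the characteristic-function method combined with the standard martingale telescoping argument, reading $S_{ki}$ and $R_{ki}$ as the unconditional and predictable (conditional) covariances respectively. First I would reduce to the scalar case by the Cram\'er--Wold device: to establish $\sum_{i=1}^{k}\xi_{ki}\Rightarrow\Normal(0,S)$ it suffices to show, for every fixed $t\in\mbR^{d}$, that the scalar sum $T_{k}:=\sum_{i=1}^{k}\eta_{ki}$ with $\eta_{ki}:=\iproduct{t}{\xi_{ki}}$ converges in distribution to $\Normal(0,\sigma^{2})$, where $\sigma^{2}:=t^{\top}St$. Conditions \eqref{eq56}--\eqref{eq60} transfer to the $\eta_{ki}$: they form a martingale-difference array for the filtration $\mathcal{F}_{kj}:=\sigma(\xi_{k1},\dots,\xi_{kj})$, have summable second moments, and satisfy the scalar Lindeberg condition, while the conditional variance $v_{kj}:=\mbE[\eta_{kj}^{2}\mid\mathcal{F}_{k,j-1}]=t^{\top}R_{kj}t$ is close in $L^{1}$ to its unconditional counterpart $t^{\top}S_{kj}t$ by \eqref{eq59}.

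Next I would target the characteristic function: by L\'evy's continuity theorem it is enough to show $\phi_{k}(u):=\mbE[\exp(iuT_{k})]\to\exp(-\tfrac12 u^{2}\sigma^{2})$ for each $u\in\mbR$. Introduce the predictable variance $V_{kj}:=\sum_{l=1}^{j}v_{kl}$ and the weighted exponential $N_{kj}:=\exp(iuT_{kj}+\tfrac12 u^{2}V_{kj})$ with $N_{k0}=1$, so that $\exp(iuT_{k})=N_{kk}\exp(-\tfrac12 u^{2}V_{kk})$. The telescoping identity $\mbE[N_{kk}]-1=\sum_{j=1}^{k}\mbE[N_{k,j-1}(\exp(iu\eta_{kj}+\tfrac12 u^{2}v_{kj})-1)]$, together with the $\mathcal{F}_{k,j-1}$-measurability of $N_{k,j-1}$ and $v_{kj}$, reduces the problem to controlling $\mbE[\exp(iu\eta_{kj})-\exp(-\tfrac12 u^{2}v_{kj})\mid\mathcal{F}_{k,j-1}]$. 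I would Taylor-expand $e^{iu\eta_{kj}}=1+iu\eta_{kj}-\tfrac12 u^{2}\eta_{kj}^{2}+r_{kj}$: the first-order term vanishes under conditioning by \eqref{eq56}, the quadratic term produces exactly $-\tfrac12 u^{2}v_{kj}$ which cancels the first-order expansion of $\exp(-\tfrac12 u^{2}v_{kj})$, and the residual is a sum of $r_{kj}$ and $O(v_{kj}^{2})$ terms.

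The main obstacle is to bound these residuals uniformly. Using $|r_{kj}|\le C\min(u^{2}\eta_{kj}^{2},|u|^{3}|\eta_{kj}|^{3})$ and splitting on $\{|\eta_{kj}|\le\varepsilon\}$ versus its complement, the small part contributes at most $C|u|^{3}\varepsilon\sum_{j}\mbE[\eta_{kj}^{2}]$ (bounded by \eqref{eq57}) and the large part is exactly the Lindeberg tail controlled by \eqref{eq60}; letting $k\to\infty$ and then $\varepsilon\to0$ drives the sum to zero. The $O(v_{kj}^{2})$ terms vanish because $\sum_{j}v_{kj}^{2}\le(\max_{j}v_{kj})\sum_{j}v_{kj}$, where $\max_{j}v_{kj}\to0$ is itself a consequence of Lindeberg and \eqref{eq57}. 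A second, genuinely technical difficulty is that the weight $|N_{kk}|=\exp(\tfrac12 u^{2}V_{kk})$ need not be bounded almost surely; I would resolve this by a stopping-time truncation, replacing $k$ by the first index at which $V_{kj}$ exceeds $\sigma^{2}+\delta$ and proving negligibility of the truncated tail via \eqref{eq58}--\eqref{eq59}, which together give $V_{kk}\to\sigma^{2}$ in $L^{1}$. Finally, combining $\mbE[N_{kk}]\to1$ with the facts that, after truncation, $|N_{kk}|$ is uniformly bounded while $\exp(-\tfrac12 u^{2}V_{kk})\to\exp(-\tfrac12 u^{2}\sigma^{2})$ in $L^{1}$ (bounded convergence, since $0<\exp(-\tfrac12 u^{2}V_{kk})\le1$ and $V_{kk}\to\sigma^{2}$ in probability), one obtains $\phi_{k}(u)=\mbE[N_{kk}\exp(-\tfrac12 u^{2}V_{kk})]\to\exp(-\tfrac12 u^{2}\sigma^{2})$, completing the proof.
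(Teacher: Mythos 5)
This lemma is not proved in the paper at all: it is imported verbatim from \cite{chen2005stochastic} (the double-indexed martingale CLT used in stochastic approximation), so there is no in-paper argument to compare against. Judged on its own merits, your proposal is a correct reconstruction of the standard proof of exactly this kind of result: Cram\'er--Wold reduction, then the McLeish/Billingsley characteristic-function argument for a martingale-difference array (your scalar statement is essentially Theorem 35.12 of Billingsley, whose hypotheses --- conditional variance $V_{kk}\to\sigma^{2}$ in probability plus the unconditional Lindeberg condition --- are precisely what you derive from \eqref{eq56}--\eqref{eq60}). The one genuinely non-classical ingredient of the lemma is condition \eqref{eq59}, and you use it correctly in the key place: it is what converts the deterministic convergence \eqref{eq58} of the unconditional covariances into $L^{1}$ (hence in-probability) convergence of the predictable variance $V_{kk}=\sum_{j}t^{\top}R_{kj}t$ to $\sigma^{2}=t^{\top}St$. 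Your handling of the remainder terms (splitting the third-order Taylor bound at level $\varepsilon$ against \eqref{eq57} and \eqref{eq60}), the stopping-time truncation to tame $\exp(\tfrac12 u^{2}V_{kk})$, and the final bounded-convergence step are all the standard moves and are sound; the degenerate direction $t^{\top}St=0$ is covered by Chebyshev since the martingale-difference property makes $\mbE[T_{k}^{2}]=t^{\top}S_{k}t\to 0$.

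One small misattribution is worth fixing, though it does not break the proof: you claim $\max_{j}v_{kj}\to 0$ ``is itself a consequence of Lindeberg and \eqref{eq57}.'' Lindeberg and \eqref{eq57} are statements about \emph{unconditional} moments; they give $\max_{j}\mbE[\eta_{kj}^{2}]\le\varepsilon^{2}+\sum_{j}\mbE[\eta_{kj}^{2}\mathbb{I}_{\{|\eta_{kj}|>\varepsilon\}}]\to 0$, but say nothing directly about the random conditional variances $v_{kj}=t^{\top}R_{kj}t$. To control $\max_{j}v_{kj}$ you must invoke \eqref{eq59} once more: $\mbE[\max_{j}v_{kj}]\le\max_{j}t^{\top}S_{kj}t+\norm{t}^{2}\sum_{j}\mbE[\norm{R_{kj}-S_{kj}}]$, where the first term vanishes by the unconditional Lindeberg bound above and the second by \eqref{eq59}, giving $\max_{j}v_{kj}\to 0$ in $L^{1}$. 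Relatedly, to make $|N_{k,j-1}|$ genuinely bounded along the stopped sequence you should stop not only when $V_{kj}$ first exceeds $\sigma^{2}+\delta$ but also cap the single overshoot term $v_{k\tau}$ (e.g., stop additionally when some $v_{kj}$ exceeds $1$), with the event that this extra stopping activates shown negligible via the same $L^{1}$ bound. With these two repairs your outline is a complete and correct proof.
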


\section{Numerical Experiments} \label{ec:Numerical Experiments}
In this section, we present empirical evaluations of our proposed algorithms. Specifically, we test our algorithms on two problems: a logistic regression with regularization and an inventory management problem. These problems are framed as stochastic composite optimization problems. We also provide detailed descriptions of our experimental setups.

\subsection{Logistics Regression with Regularization}\label{sec:Logistics Regression with Regularization}
In this subsection, we test the methods on binary classification problems where the objective function is given by a logistic loss with regularization:
\begin{equation}
	F(x) = \frac{1}{N}\sum_{i=1}^{N} \log(1+\exp(-z_{i}y_{i}^{\top}x)) + \frac{1}{2} \lambda_{1}\norm{x}_{2}^{2}+ \lambda_{2} \norm{x}_{1} . 
 \label{logistic_loss}
\end{equation}
Here $\left\{ (y_{i},z_{i}), i=1\cdots,N \right\}$ are input output data pairs, and the regularization parameters are chosen as: (i) $\lambda_{1}=\lambda_{2}=1/N$ to get a strongly convex loss;(ii) $\lambda_{1}=0$ and $\lambda_{2}=1/N$ to get a convex loss. We use the GISETTE dataset~\footnote{https://archive.ics.uci.edu/dataset/170/gisette}. The loss function \eqref{logistic_loss} can be seen as the expectation taken over a discrete uniform probability distribution defined in the dataset.

We implement four different sampling methods: (i) Continuously increase the sample size $K_{n}$ at a geometric rate and use the proximal gradient method (labeled \textbf{GEOM}), i.e.,
\begin{equation*}
	K_{n}=\lceil K_{0}(1+\gamma_{1})^{n}\rceil,
\end{equation*}
where $\gamma_{1}>0$ and $K_{0}$ is the initial sample size  (only for strongly convex objective); (ii)   Continuously increase the sample size $K_{n}$ at a polynomial rate and use the accelerated proximal gradient method (labelled \textbf{POLY}), i.e.,
\begin{equation*}
	K_{n}=\lceil K_{0}n^{\gamma_{2}}\rceil,
\end{equation*}
where $\gamma_{2}>0$ and $K_{0}$ is the initial sample size  (only for convex objective); (iii)A line search based algorithm proposed in \citeec{Franchini2023line} (labelled \textbf{LISA}); (iv) Algorithm~\ref{algorithm_adaNAPG} proposed in this paper (labelled \textbf{adaNAPG}). The parameters for these four methods are summarized in Table~\ref{tab:parameter}.

\begin{table}
\TABLE
{Parameters for Different Methods\label{tab:parameter}}
{\begin{tabular}{ccc}
\hline
& Method  & Parameters \\ \hline\up 
\multirow{3}{*}{\begin{tabular}[c]{@{}c@{}}Strongly \\ Convex\end{tabular}} & adaNAPG &  $\theta=0.9$, $\nu=5.5$          \\
                        & GEOM    &   $\gamma_{1}=0.05$, $K_{0}=2$     \\
                        & LISA    &    Same setting in \citeec{Franchini2023line}       \down \\\hline \up
\multirow{3}{*}{Convex}& adaNAPG &  $\theta=0.9$, $\nu=6.0$           \\
                         & POLY   &  $\gamma_{2}=0.01$, $K_{0}=2$          \\
                        & LISA     &   Same setting in \citeec{Franchini2023line}         \down \\ \hline
\end{tabular}

}{}
\end{table}

\subsubsection{Convergence rate}

Figure~\ref{fig:logistic_sc} and Figure~\ref{fig:logistic_cvx} illustrate the performance of different algorithms in scenarios where the loss function is strongly convex and convex, respectively. From Figure~\ref{fig:logistic_sc}, it can be observed that our proposed algorithm (\textbf{adaNAPG}) achieves the same and even better convergence effect as the \textbf{GEOM} sampling strategy while using fewer samples. Compared to \textbf{LISA}, our algorithm utilizes significantly fewer samples in the later iterations. One point worth noting in Figure~\ref{fig:logistic_sc} is that in the first 50 epochs the algorithm under \textbf{GEOM} sampling strategy exhibits significant fluctuations. This can be attributed to the small number of samples leading to larger gradient estimation errors. However, the other two adaptive sampling strategies adjust the number of samples to an appropriate quantity during the initial stages of the algorithm, ensuring its stability. In Figure~\ref{fig:logistic_cvx}, the \textbf{POLY} sampling strategy achieves a similar converge rate to \textbf{adaNAPG} but with more samples. Furthermore, since \textbf{LISA} is not an accelerated mthod, Figure~\ref{fig:logistic_cvx} also shows that the acceleration algorithm \textbf{adaNAPG} exhibits an excellent convergence rate under the convex loss scenario, which is consistent with the theoretical results.

\begin{figure}
  \FIGURE
  {
  \subcaptionbox{\label{fig:logistic_sc_sample_size}} 
  {\includegraphics[width=0.45\textwidth]{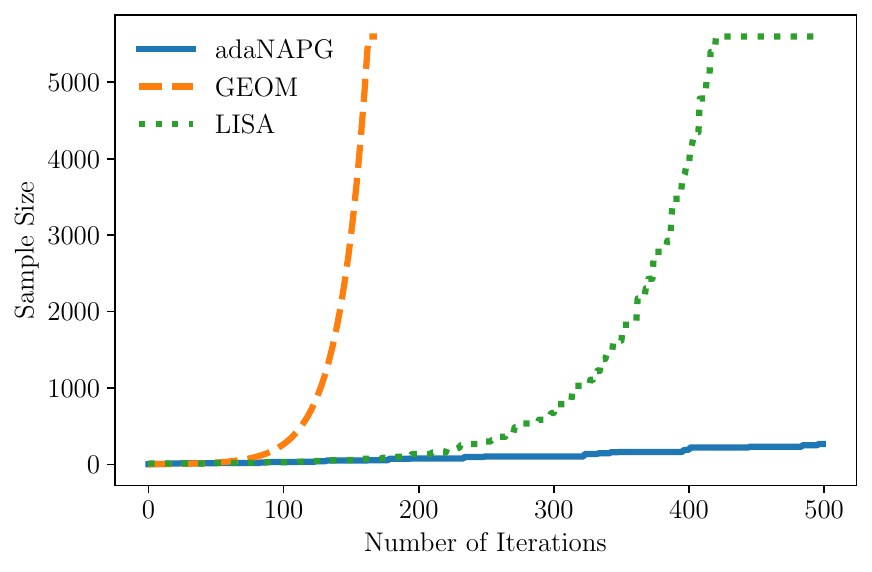}} 
  \hfill\subcaptionbox{\label{fig:logistic_sc_loss}} {\includegraphics[width=0.45\textwidth]{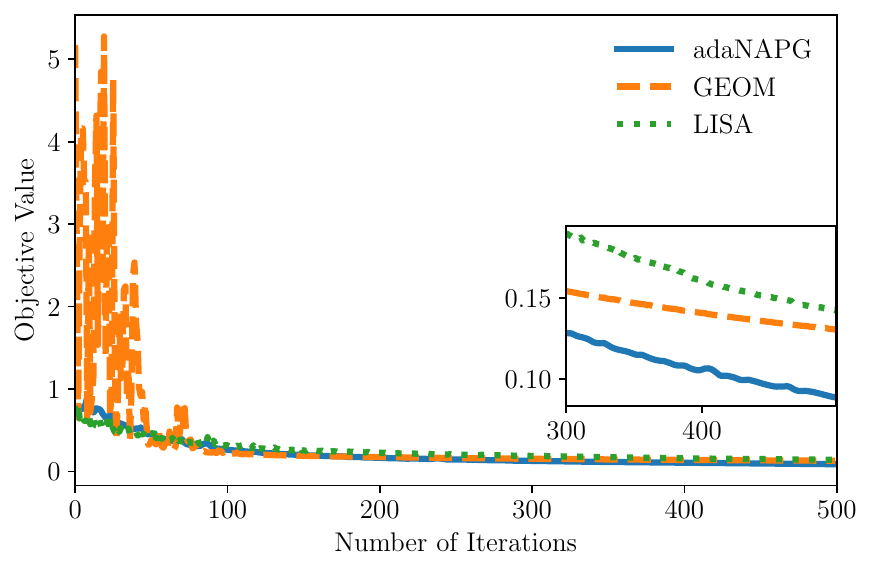}} 
  } 
  {Logistic regression with strongly convex loss. \label{fig:logistic_sc}} 
  {} 
\end{figure}

\begin{figure}
  \FIGURE
  {
\subcaptionbox{\label{fig:logistic_cvx_sample_size}} 
  {\includegraphics[width=0.45\textwidth]{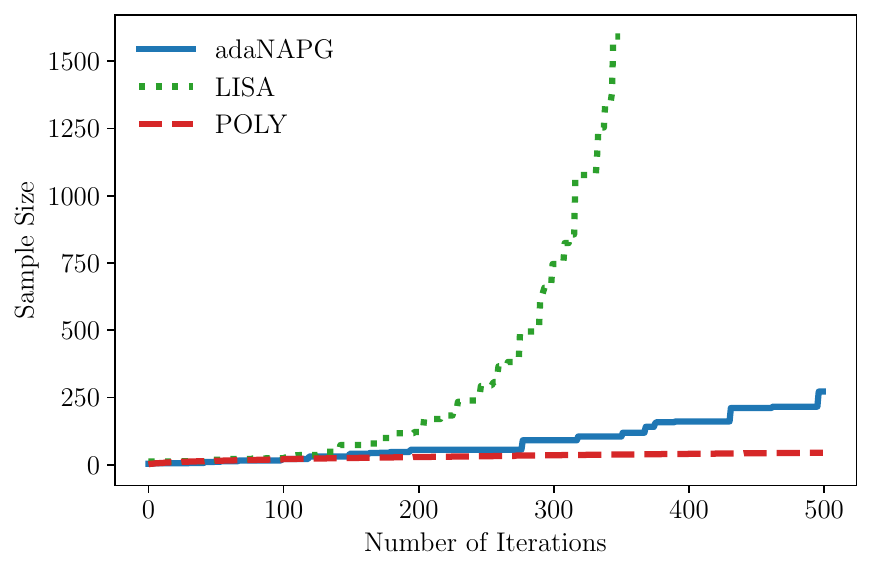}} 
  \hfill\subcaptionbox{\label{fig:logistic_cvx_loss}} {\includegraphics[width=0.45\textwidth]{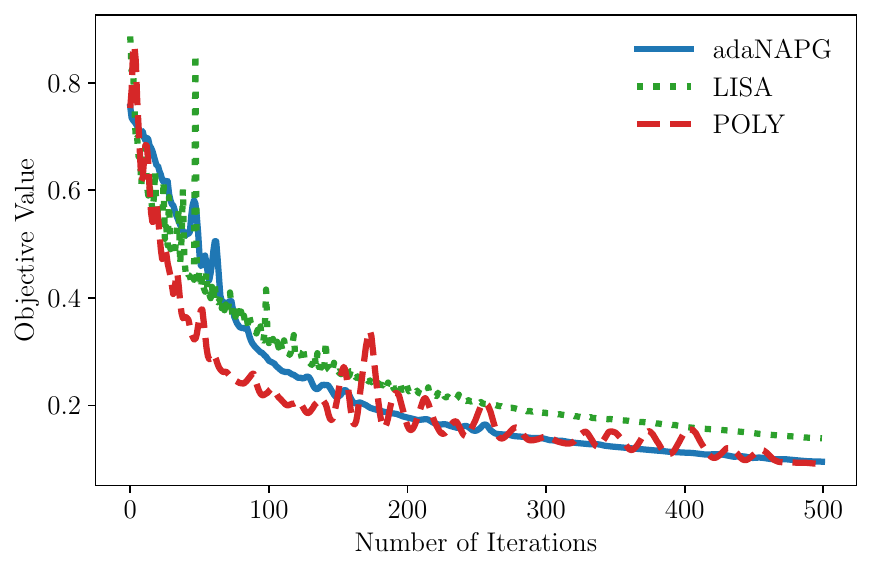}} 
  } 
  {Logistic regression with convex loss. \label{fig:logistic_cvx}} 
  {} 
\end{figure}
Figure~\ref{fig:logistic_sample_size_vs_objective} shows the efficiency of sample usage of different methods. The the x-axis measure the total samples each method has used while the y-axis is the objective value. These two figures show that Algorithm~\ref{algorithm_adaNAPG} can dynamically allocate the required number of samples for each iteration and is more efficient in sample usage, since it can achieve greater descent in the objective value given the same simulation budget.

\begin{figure}
  \FIGURE
  {
\subcaptionbox{\label{fig:logistic_sample_size_vs_objective_sc}} 
  {\includegraphics[width=0.45\textwidth]{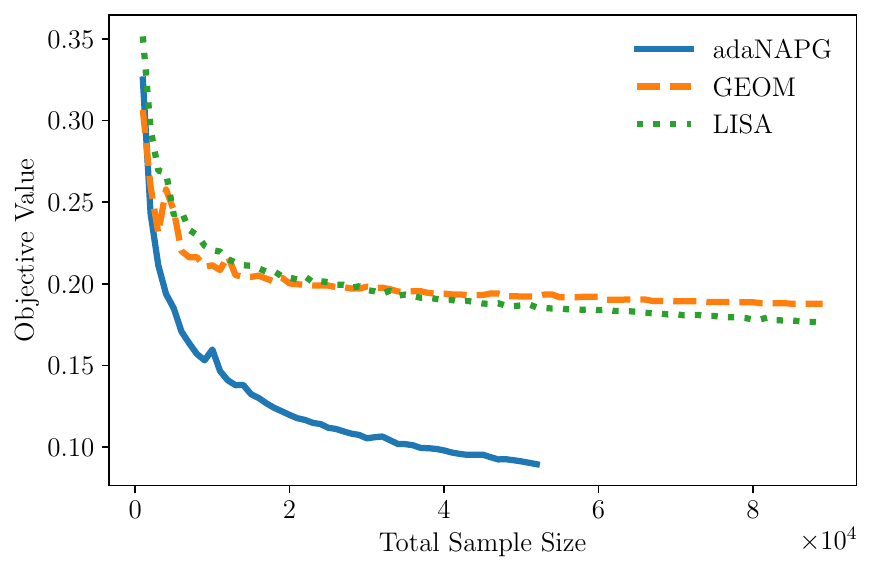}} 
  \hfill\subcaptionbox{\label{fig:logistic_sample_size_vs_objective_cvx}} {\includegraphics[width=0.45\textwidth]{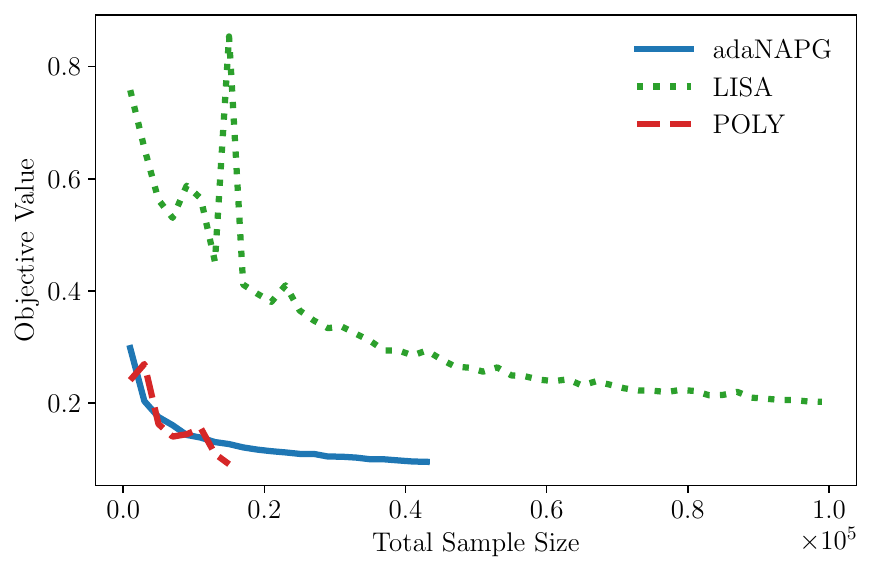}} 
  } 
  {Efficiency of sample usage. \label{fig:logistic_sample_size_vs_objective}} 
  {} 
\end{figure}

\subsubsection{Limiting distributions}
In this subsection, we show the numerical results for Theorem~\ref{CLT_adaNAPG} in Section~\ref{sec:asymptotic}. We run Algorithm~\ref{algorithm_adaNAPG} for strongly convex objectives by $1000$ independent sample paths and terminate at $T=800$. Since $x_{n}-x\opt$ is a multi-dimension vector, here we just show the first four components of $x_{n}-x\opt$. To obtain an optimal solution $x\opt$, we solve the logistic regression problem~\eqref{logistic_loss} by running FISAT \citepec{Beck2017First} with full gradient. 
Figure~\ref{fig:hist_adaNAPG} show the histogram of the sequence $x_{n}$ generated by Algorithm~\ref{algorithm_adaNAPG}. The fitted normal distributions are also showed (the red curve). It can be seen that the error is normally distributed, which is consist with the theory in Section~\ref{sec:asymptotic}.

We also compute the root-mean-square error (RMSE) of the sample sequence. Noting that for each algorithm we run $1000$ sample paths and terminate at $n=800$, the RMSE for each step is computed as
\begin{equation*}
    \textrm{RMSE}_{n} = \sqrt{\frac{1}{1000}\sum_{j=1}^{1000}\norm{x_{n}^{(j)}-x\opt}^{2}},
\end{equation*}
where $x_{800}^{(j)}$ is the $j$-th sample path of $x_{800}$. Figure~\ref{fig:logistic_RMSE_adaNAPG} shows the result for Algorithm~\ref{algorithm_adaNAPG}, where the y-axis measure the $\log(RMSE)$. We also compute the linear relationship between $\log(RMSE)$ and $T$ (the solid line).

\begin{figure}
  \FIGURE
  {
    \subcaptionbox{Histogram of some components of $x_{800}$\label{fig:hist_adaNAPG}} 
  {\includegraphics[width=0.45\textwidth]{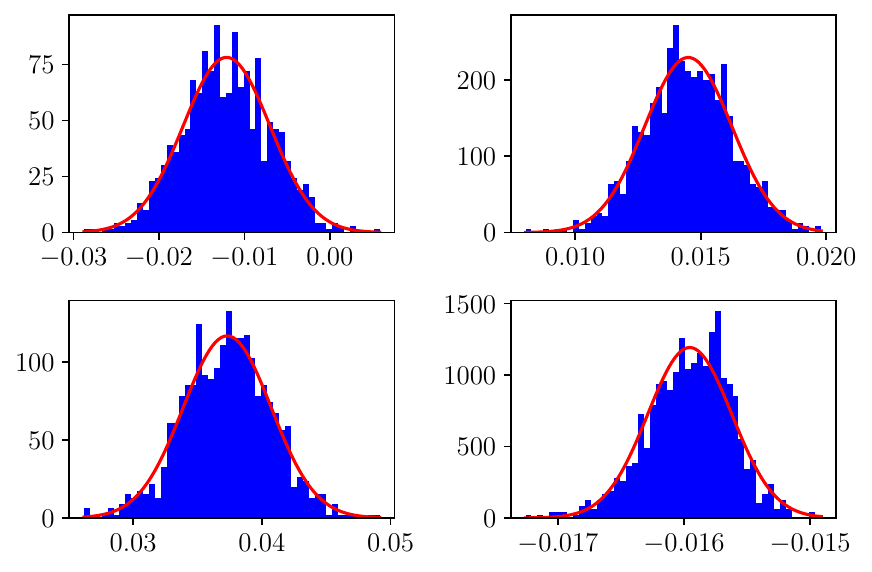}} 
  \hfill\subcaptionbox{RMSE \label{fig:logistic_RMSE_adaNAPG}} {\includegraphics[width=0.45\textwidth]{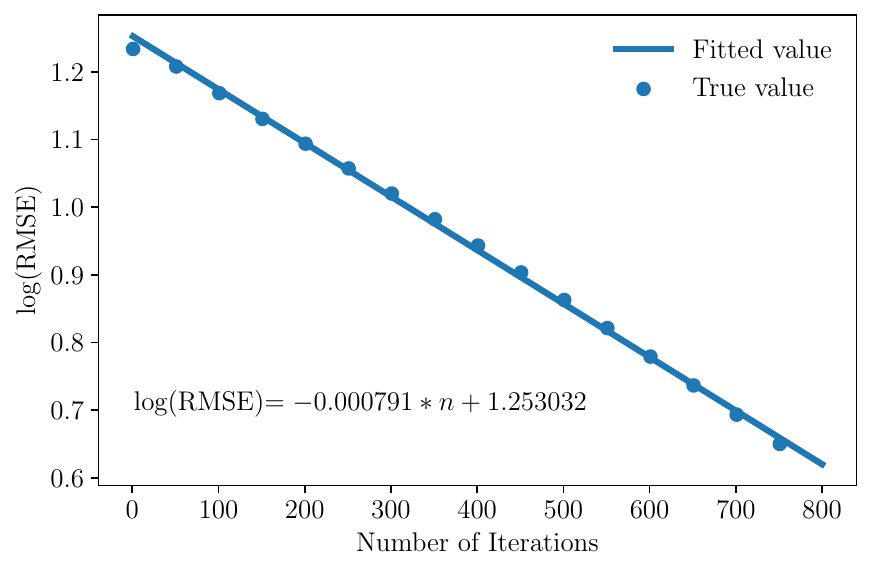}}   
  } 
  {Histogram and RMSE for Algorithm~\ref{algorithm_adaNAPG}.\label{fig:CLT_adaNAPG}} 
  {} 
\end{figure}

\subsection{Inventory Control Problem}

In this subsection, we consider a large-scale inventory problem with a general network structure similar to \citeec{Wang2023Large}. The optimization problem is given by
\begin{equation}\label{inventory}
    \min_{S} \; \mbE\left[\sum_{t=1}^{T}C_{t}(S)\right] + \lambda \norm{S}_{1}.
\end{equation}
Here $S$ is the base-stock level,  $C_{t}(S)$ is the total cost for period $t$ with base-stock level $S$ and $\lambda>0$ is a regularization parameter. Noticing that the convexity the total cost functions of inventory problems depends on the supply chain structure and the model setting, 
Problem~\eqref{inventory} may not always be convex. 

In our experiment the total time duration is set to 50, the number of network nodes is set to 1000, and the random demand follows a normal distribution. We use the BP Algorithm proposed in \citeec{Wang2023Large} to estimate the gradient of the first term in Problem~\eqref{inventory}. Two sampling strategies are adopted: (i) a fixed number of samples is taken at each step (labeled \textbf{FIX}) with $K=50,100$; (ii) Algorithm~\ref{algorithm_adaNAPG} (labeled \textbf{adaNAPG}). 

Figure~\ref{fig:inventory_1000} presents the convergence results of various policies. Figure~\ref{fig:inventory_1000_objective} demonstrates that our adaptive method converges significantly faster than the fixed sample size strategy and achieves a lower objective value. Figure~\ref{fig:inventory_1000_total_objective} evaluates the efficiency of sample usage, further illustrating the benefits of Algorithm~\ref{algorithm_adaNAPG} in the allocation of simulation resources.
\begin{figure}
  \FIGURE
  {
\subcaptionbox{\label{fig:inventory_1000_objective}} 
  {\includegraphics[width=0.45\textwidth]{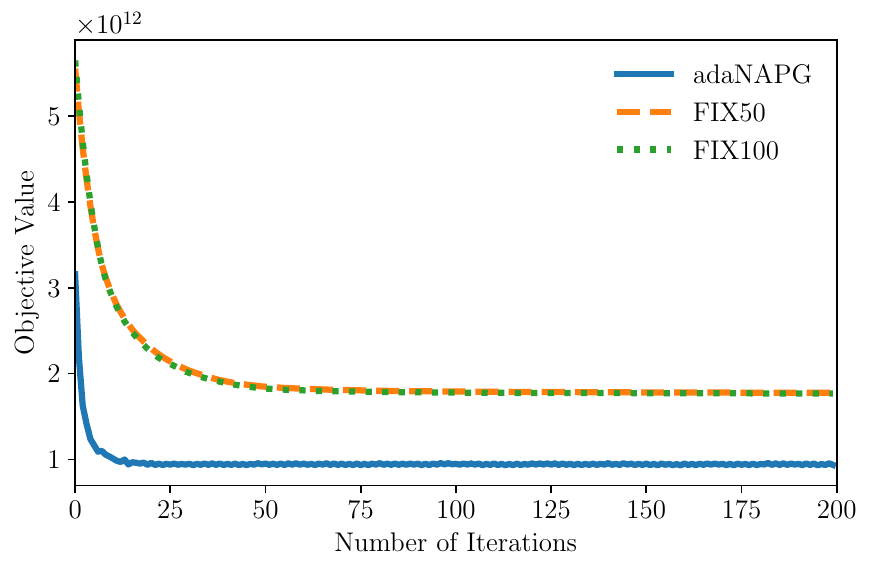}} 
  \hfill\subcaptionbox{\label{fig:inventory_1000_total_objective}} {\includegraphics[width=0.45\textwidth]{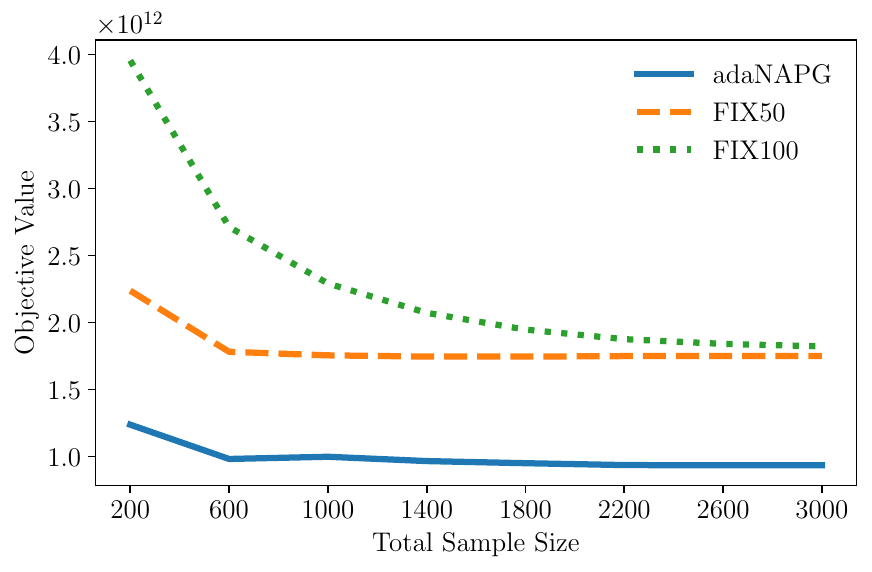}} 
  } 
  {Inventory control problem with 1000 nodes. \label{fig:inventory_1000}} 
  {} 
\end{figure}

\subsection{Discussion on Assumption~\ref{assum_clt_noise}}\label{appendix:remark_clt_noise}

In this section, we will further examine the reasonableness of condition~\eqref{eq51} in Assumption~\ref{assum_clt_noise} through two numerical examples. We begin by revisiting condition~\eqref{eq51} here, 
\begin{equation}\label{eq51r}
            \lim_{n \to \infty}  \rho^{-n} \mbE[\tilde{w}_{n}\tilde{w}_{n}^{\top}]  = \tilde{S}_{0},
        \end{equation}
 where $\rho$ is defined in Theorem~\ref{adaNAPG_convergence} as the convergence rate parameter for the strongly convex objective, and $\tilde{S}_{0}$ is a semi-definite matrix. Setting $ W_{n}= \rho^{-n} \mbE[\tilde{w}_{n}\tilde{w}_{n}^{\top}]$, Condition~\eqref{eq51r} necessitates that the sequence $\{W_{n}\}$ converges to some matrix (if the limit exists, it will naturally be semi-definite). Since the limiting matrix $\tilde{S}_{0}$ cannot be known analytically in advance, we define the following gap function to evaluate the sequence's convergence, 
\begin{equation*}
    \Delta W_{n} = \norm{W_{n+1}-W_{n}}_{F},
\end{equation*}
which measures the difference between two successive steps. When $\Delta W_{n} \to 0$, we consider that the sequence converges to some extent (but this may not be a sufficient condition for the sequence convergence).

\subsubsection{Parameter Estimation Problem}\label{sec:app21}

Here we consider a parameter estimation problem, where aiming to estimate the unknown $d$-dimensional parameter $x\opt$ based on the gathered scalar measurements given by $l= u^{\top}x\opt+v$, where $u\in \mbR^{d}$ denotes the regression vector and $v\in \mbR$ denotes the local observation noise. Assume that $u$ and $v$ are mutually independent i.i.d. Gaussian sequences with distributions $\Normal(0,R_{u})$ and $\Normal(0,\sigma_{v}^{2})$, respectively. Suppose the covariance matrix $R_u$ is positive definite. Then, we might model the parameter estimation problem as the following stochastic composite optimization problem:
\begin{equation*}
    \min_{x \in \mbR^{d}} \; \mbE[\norm{l-u^{\top}x}^{2}] + \lambda \norm{x}_{1},
\end{equation*}
where $\lambda >0$. Let $f(x)=\mbE[\norm{l-u^{\top}x}^{2}]$ and $h(x)=\lambda \norm{x}_{1}$. By basic calculus, we have $f(x)=(x-x\opt)^{\top}R_{u}(x-x\opt) + \sigma_{v}^{2}$ and $\nabla f(x)=2R_{u}(x-x\opt)$. Suppose that we can observe $u$ and $l$ then the noisy observation of the gradient $\nabla f(x)$ might be constructed as $g(x,u,v) = 2(uu^{\top}x-lu)$. In this scenario, we initialize $d=10$, with $\lambda=3$ and $R_{u}$ being randomly generated. For Algorithm~\ref{algorithm_adaNAPG}, we configure parameters $\theta=1.2$ and $\nu=1.4$. Quantities $L$ and $\mu$ are computed using $\lambda_{max}(R_{u})$ and $\lambda_{min}(R_{u})$, respectively. The algorithm runs for $200$ iterations, and during each iteration, we utilize $20$ sample paths to estimate $W_n$ before calculating $\Delta W_n$. Figure~\ref{fig:sr_deltaw} illustrates that the gap $\Delta W_{n}$ between the successive steps decreases from approximately $7000$ at the start to around $3$ after $200$ iterations. Given that the matrix is $10\times 10$ and the Frobenius norm is applied, this gap is, indeed, very small.

\subsubsection{Logistic Regression}
The regression problem follows the same setting as Section~\ref{sec:Logistics Regression with Regularization}. Figure~\ref{fig:l_deltaw} shows the gap $\Delta W_{n}$ between the successive steps of the sequence $\{W_{n}\}$ for the logistic regression problem  also decreases to $0$ after $500$ iterations. This result is consistent with the parameter estimation problem, demonstrating that Condition~\ref{eq51r} is reasonable.

\begin{figure}
  \FIGURE
  {
\subcaptionbox{$\Delta W_{n}$ for the parameter estimation problem.\label{fig:sr_deltaw}}{\includegraphics[width=0.45\textwidth]{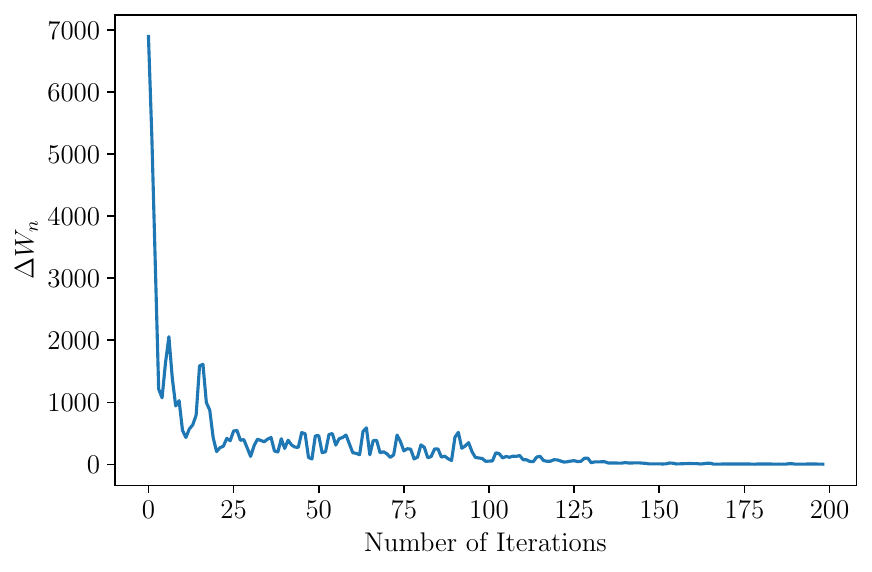}} 
  \hfill\subcaptionbox{$\Delta W_{n}$ for the logistic regression problem.\label{fig:l_deltaw}} {\includegraphics[width=0.45\textwidth]{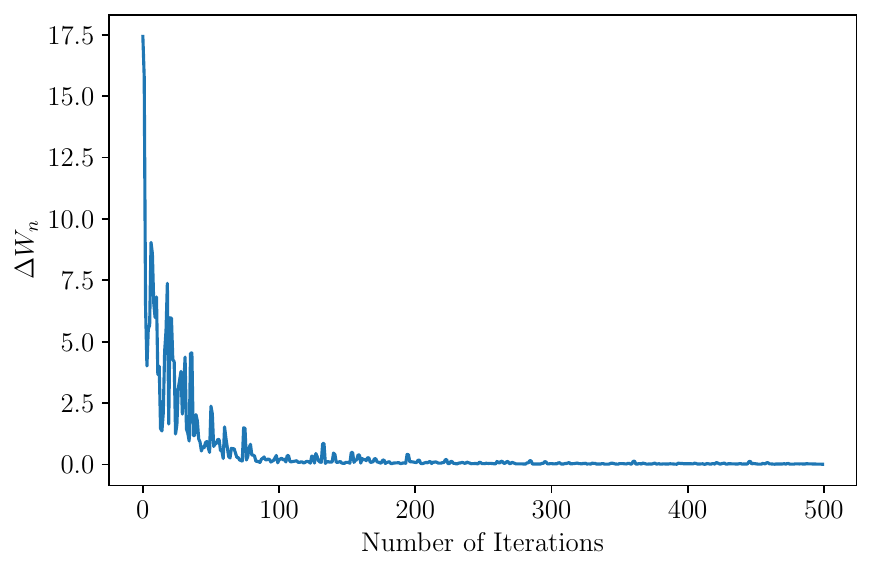}} 
  } 
  {Numerical evidences for Assumption~\ref{assum_clt_noise}} 
  {} 
\end{figure}

\bibliographystyleec{informs2014}
\bibliographyec{sample}

\end{document}